\definecolor{forestgreen}{rgb}{0.13, 0.55, 0.13}
\newtheorem{theorem}{Theorem}[section]
\newtheorem{lemma}[theorem]{Lemma}
\newtheorem{proposition}[theorem]{Proposition}
\newtheorem{remark}[theorem]{Remark}
\newtheorem{example}[theorem]{Example}
\newtheorem{assumption}[theorem]{Assumption}
\newcommand{\rmd}{{\rm d}}
\renewcommand{\le}{\leqslant}
\renewcommand{\ge}{\geqslant}
\renewcommand{\tilde}{\widetilde}
\def\@fnsymbol#1{\ensuremath{\ifcase#1\or *\or \dagger \or \ddagger\or
   \mathsection\or \mathparagraph\or \|\or **\or \dagger\dagger
   \or \ddagger\ddagger \else\@ctrerr\fi}}
\begin{document}

\title{Interacting Hawkes processes with multiplicative inhibition}


\author{ Céline Duval\footnote{Universit\'e de  Paris, Laboratoire  MAP5, UMR CNRS
8145. E-mail: \href{mailto: celine.duval@u-paris.fr}{celine.duval@u-paris.fr}},\  \ Eric Lu\c con\footnote{Universit\'e de  Paris, Laboratoire  MAP5, UMR CNRS
8145.  E-mail: \href{mailto: eric.lucon@u-paris.fr}{eric.lucon@u-paris.fr}} \ and  Christophe Pouzat \footnote{IRMA, Université de Strasbourg, CNRS UMR 7501.  E-mail: \href{mailto: christophe.pouzat@math.unistra.fr}{christophe.pouzat@math.unistra.fr}}
}
\date{}

\maketitle

\begin{abstract} 
In the present work, we introduce a general class of mean-field interacting nonlinear Hawkes processes modelling the reciprocal interactions between two neuronal populations, one excitatory and one inhibitory. The model incorporates two features: inhibition, which acts as a multiplicative factor onto the intensity of the excitatory population and additive retroaction from the excitatory neurons onto the inhibitory ones. We give first a detailed analysis of the well-posedness of this interacting system as well as its dynamics in large population. The second aim of the paper is to give a rigorous analysis of the longtime behavior of the mean-field limit process. We provide also numerical evidence that inhibition and retroaction may be responsible for the emergence of limit cycles in such system.
\end{abstract}
\noindent {\sc {\bf Keywords.}} Multivariate nonlinear Hawkes processes, Inhibition, Mean-field systems, Neural networks.\\
\noindent {\sc {\bf AMS Classification.}} 60G55, 92B20, 60F99.

 \section{Introduction\label{sec:intro}}
\subsection{Biological description\label{sec:bio_desc}}

 Neurons are peculiar network forming cells whose (biological) function is to receive signals from other neurons, 'integrate' these signals and transmit the 'integration result' to other neurons or \emph{effector} cells like muscles \cite{luo:15}. Since most neurons are large cells by biological standards (e.g. the motoneurons controlling the muscles of our big toes are about a meter long) they use a specific mechanism to reliably transmit signals from one of their extremities to another \cite{castelfranco.hartline:16}: a propagating electrical wave of \emph{short} duration (\(\sim\) 1-2 ms) called the \emph{action potential} or \emph{spike}. Action potentials are \emph{all-or-none} events (a necessary condition for getting reliable transmission across long distances) that are triggered when the neuron's \emph{membrane potential} (i.e. the electrical potential difference between the inside and the outside of the neuron) is 'large enough'. To simplify, between two successive action potentials the neuron 'sums' its inputs, changing thereby its membrane potential and, when this membrane potential crosses a more or less sharp threshold, the neuron fires a spike that propagates \cite{luo:15}. Neurons come in two main categories: i) \emph{excitatory} neurons make their \emph{postsynaptic} partners more likely to spike; ii) \emph{inhibitory} neurons make their \emph{postsynaptic} partners less likely to spike. Although, we are dealing  with biological material, implying that there are many sub-types within both of these categories \cite{braitenberg.schuz:98,luo:15},  we are interested in studying a  simplified model of the \emph{neocortex}--the outmost and most recently evolved part of the vertebrate brain--and we are going to consider just two neuronal types, excitatory and inhibitory with the actual neocortical proportions of 80\% and 20\% \cite{braitenberg.schuz:98}.

 \subsection{Our model}

 In this paper, a general class of mean-field interacting Hawkes processes is considered, modeling the reciprocal interactions between a population $A$ of excitatory neurons and a population $B$ of inhibitory neurons. Hawkes processes, originally introduced in \cite{MR0378093}, have met a recent interest in the mathematical neuroscience literature, for their ability to model the dependence of the activity of a neuron in the history of the network. We adopt here a usual simplifying hypothesis on the network, that is to suppose all-to-all connections between neurons, within a large population (mean-field framework). We refer to Section~\ref{sec:Hawkes_inhibition} below for a detailed account of the use of Hawkes processes in the modeling of neuronal networks. 
 
More precisely, consider a population of neurons of size $N\geq 1$, that is divided into population $A$ (that is considered to be excitatory) with size $N_{ A}:= \alpha N$ with $ \alpha\in (0, 1)$ (e.g. $ \alpha=0.8$ \cite{braitenberg.schuz:98}) and a population $B$ with size $N_{ B}= (1- \alpha) N$ which is considered to be inhibitory. A particular instance of the model we describe in the paper (and its main motivation) is then given in terms of a family of counting processes $(Z^{1}_{ t}, \ldots, Z^{ N_{ A}}_{ t})$ (population $A$) and $(Z^{ N_{ A}+1}_{ t}, \ldots, Z^{N}_{  t})$ (population $B$) with coupled conditional stochastic intensities given respectively by $\lambda^{A}$ and $\lambda^{B}$ as follows
\begin{equation}
\label{eq:model_intro}
\begin{cases}
 \lambda_{  t}^{A}:&= \Phi_{ A}\left(\frac{ 1}{ N} \underset{1\le j\le N_{A}}{\sum}\int_{ 0}^{t^{ -}} h_{ 1}(t-u) {\rmd Z_{u}^{j}}) \right) \Phi_{ B\to A} \left( \frac{ 1}{ N}  \underset{N_{A}+1\le j\le N}{\sum} \int_{ 0}^{t^{ -}} h_{ 2}(t-u) {\rmd Z_{u}^{j}}\right), \\
 \lambda_{  t}^{B}:&= \Phi_{ B} \left(\frac{ 1}{ N}  \underset{N_{A}+1\le j\le N}{\sum} \int_{ 0}^{t^{ -}} h_{ 3}(t-u) {\rmd Z_{u}^{j}})\right) + \Phi_{ A\to B} \left(\frac{ 1}{ N}  \underset{1\le j\le N_{A}}{\sum} \int_{ 0}^{t^{ -}} h_{ 4}(t-u) {\rmd Z_{u}^{j}}\right).
\end{cases}
\end{equation}
We refer to Section~\ref{sec:modgen} below for precise definitions and main hypotheses concerning Model \eqref{eq:model_intro}. The dynamics given by \eqref{eq:model_intro} is of Hawkes type: the intensity of each particle depends on the history of the whole system, through memory kernels $h_{ i}$, $i=1, \ldots, 4$ and firing rate functions $ \Phi_{ A}$ and $ \Phi_{ B}$ (a classical example, so-called linear, being when $ \Phi_{ A}(x)= \mu_{ A}+ x$ and $ \Phi_{ B}(x)= \mu_{ B}+ x$, where $ \mu_{ A}, \mu_{ B}\geq 0$ are constant intensities modeling spontaneous activity in populations $A$ and $B$ in absence of interaction). The main novelty of the Model \eqref{eq:model_intro} (in comparison with previous Hawkes models with inhibition, see Section~\ref{sec:Hawkes_inhibition} below) lies in the multiplicative nature of the influence of inhibitory population $B$ onto population $A$, through the inhibition kernel $ \Phi_{ B\to A}$ (think for now as $ \Phi_{ B\to A}$ being a decreasing nonnegative function on $[0, +\infty)$, with $ \Phi_{ B\to A} (0)=1$ and $ \Phi_{ B\to A}(x) \xrightarrow[ x\to \infty]{}0$): activity of population $A$ should decrease as activity of population $B$ rises. In this respect, a natural control case is when $ \Phi_{ B\to A} \equiv 1$ where $ \lambda_{ t}^{ A}$ in \eqref{eq:model_intro} reduces to a simple, possibly nonlinear, Hawkes process \cite{MR3449317} decoupled from population $B$. The model secondly incorporates retroaction from population $A$ onto population $B$, that is supposed to be mostly additive, although possibly modulated by a nonlinear feedback kernel $ \Phi_{ A\to B}$. 

\subsection{Stochastic models for inhibition}
\label{sec:neuro_inhibition}
Two different asymptotic behaviors for \eqref{eq:model_intro} will be successively considered in the paper: its behavior in large population ($N\to\infty$) and in large time ($t\to\infty$). The large population analysis goes back to well-proven techniques for mean-field systems and, as we will see below, the natural limit of \eqref{eq:model_intro} will be described in terms of an inhomogeneous Poisson process with nonlinear intensity. Similar mean-field analysis have been successfully carried out for a large class of models (that do not restrict to interacting point processes), incorporating various relevant features in neuroscience, such as refractory periods \cite{CHEVALLIER20173870,raad2020stability2},  spatial inhomogeneities \cite{CHEVALLIER20191,22657695}, excitability \cite{Luon2019}. At this point, due to the nonlinear (and often non Markovian) character of the mean-field limit, one major difficulty concerns the characterization of its dynamics as $t\to\infty$, this difficulty being all the more present when one wants to incorporate inhibition. Whereas there is a vast literature on mean-field limits as $N\to\infty$, a significantly fewer number of models are amenable to a rigorous longtime analysis. In this respect, we will show that the present Model \eqref{eq:model_intro} is sufficiently tractable for a detailed rigorous analysis as $t\to\infty$. This  allows us in particular to highlight some biologically relevant effects of inhibition on the system, see Section~\ref{sec:effect_inhibition}.

The point of the current section is to motivate these theoretical difficulties in existing models in neuroscience and to compare them with \eqref{eq:model_intro}.

\subsubsection{Voltage-based models}
\label{sec:voltage_models}
A signal gets transmitted from one neuron to the next by the activation of a synapse: the action potential of the \emph{presynaptic} neuron reaches the presynaptic terminal, this triggers the release of packets of neurotransmitters (small molecules like glutamate for the excitatory synapses and GABA for the inhibitory ones) that diffuse in the small space between the pre- and post-synaptic neurons and bind to \emph{receptor-channels} located in the membrane of the postsynaptic neuron  \cite{luo:15}. Channels are macromolecules spanning the cell membrane and forming a pore through the latter; the pore can be closed or opened. After transmitter binding to these receptor-channels, the latter open and let specific ions flow through their pore (mainly sodium for excitatory synapses and chloride for the inhibitory ones). These ion fluxes or \emph{currents} will induce a change of the postsynaptic neuron membrane potential.

A simple quantitative neuronal model compatible with this biological description was introduced in 1907 by Lapicque \cite{lapicque:07}. It is now known as the \emph{Lapicque} or the \emph{integrate and fire} model and it describes the membrane potential dynamics of 'point-like' neurons (their spatial extension is not explicitly modeled) as follows \cite{tuckwell:88a,burkitt:06}:
  \begin{equation}\label{eq:IF1}
C_m \frac{\rmd V_i}{\rmd t} = -\frac{V_i}{R_m} + \sum_{j \in \mathcal{S}_{i,E}} I_{j\rightarrow i}(t) + \sum_{k \in \mathcal{S}_{i,I}} I_{k\rightarrow i}(t)\quad \text{if} \quad V_i(t) < V_{thr}\, , 
  \end{equation}
where \(i\) is the neuron index; \(C_m\) is the neuron capacitance; \(R_m\) is the neuron membrane resistance; \(\mathcal{S}_{i,E}\), respectively \(\mathcal{S}_{i,I}\), are the indices of excitatory, respectively inhibitory, neurons presynaptic to \(i\); \(I_{j\rightarrow i}(t) \ge 0\), respectively \(I_{k\rightarrow i}(t) \le 0\), are the synaptic currents due to neuron \(j\), respectively \(k\), at time \(t\); \(V_{thr}\) is the 'threshold' voltage. Every time \(V_i(t) = V_{thr}\) an action potential is emitted and \(V_i\) is reset to 0. Very often the \(I_{j\rightarrow i}(t)\) are set to:
  \begin{equation}\label{eq:Intro1}
I_{j\rightarrow i}(t) = w_{j\rightarrow i} \sum_{l} \delta(t-t_{j,l}) \, , 
  \end{equation}
where \(w_{j\rightarrow i}\) is referred to as the \emph{synaptic weight}; \(\delta\) stands for the Dirac delta function; the \(t_{j,l}\) are the successive spike times of neuron \(j\).  In this model, when there are no inputs, the membrane potential relaxes towards 0 with a time constant \(\tau = R_m C_m\). A presynaptic spike from an excitatory neuron \(j\) generates an instantaneous upward 'kick' of amplitude \(w_{j\rightarrow i}\), while a presynaptic spike from an inhibitory neuron \(k\) generates an instantaneous downward 'kick' of amplitude \(w_{k\rightarrow i}\). 

The actual action potential is not explicitly modeled--it was not doable at the time of Lapicque since the biophysics of this phenomenon was not understood--but is replaced by a point event. Notice that with the synaptic input description illustrated by   \eqref{eq:Intro1} the current generated in the postsynaptic neuron by a given synapse does not depend on the membrane voltage of the former. This constitutes a crude approximation of the actual biophysics of synaptic current generation. A much better  approximation \cite{koch.poggio.torre:83,tuckwell:88b}--but harder to work with analytically--is provided by using:
\begin{equation}\label{eq:Intro2}
I_{j\rightarrow i}(t) = g_{j\rightarrow i} (V_{rev}-V_i) \, \sum_{l} \delta(t-t_{j,l}) \, , 
  \end{equation}
where \(g_{j\rightarrow i}\) is the \emph{synaptic conductance}, \(V_{rev}\) is the synaptic current \emph{reversal potential}--as its name says this is the voltage at which the current changes sign--, it is negative or null for inhibitory synapses and larger than the threshold voltage for excitatory ones. Taking \(V_{rev}=0\) for the inhibitory inputs we see an important and empirically correct feature appearing: the downward 'kick' generated by the activation of an inhibitory input becomes \emph{proportional} to the membrane voltage \(V_i\); the larger the latter, the larger the kick.

The biophysical and molecular events leading to the action potential emission and propagation are by now well understood \cite{luo:15} and are known to involve the \emph{stochastic} opening and closing of \emph{voltage-gated ion channels}: the probability of finding a channel closed or opened depends on the membrane voltage. This stochastic dynamics of the channels can lead to fluctuations in the action potential emission or propagation time when a given (deterministic) stimulation is applied repetitively to a given neuron \cite{verveen.derkesen:68,yarom.hounsgaard:11}. Similarly, the synaptic receptor-channels do fluctuate between open and close states, but that's the binding of the transmitter rather than the membrane potential that influences the probability of finding the channel in the open state. An even (much) larger source of fluctuations at the synapse results from the variable number of transmitter packets that get released upon a presynaptic action potential arrival \cite{luo:15}, even if the same presynaptic neuron is repetitively activated in the same conditions. The result of all these fluctuation sources is a rather 'noisy' aspect of the membrane potential of cortical neurons that legitimates the use of 'stochastic units' as building blocks of neural network models \cite{tuckwell:88b,yarom.hounsgaard:11}. Historically, the first stochastic units were built by adding a Brownian motion process term to the right hand side of   \eqref{eq:IF1} \cite{gerstein.mandelbrot:64,burkitt:06,sacerdote.giraudo:12}. 

Among the vast literature concerning rigorous mean-field approaches in the context of integrate and fire systems and their derivations, we refer in particular to e.g. \cite{MR3349003,brunel2000dynamics} and references therein. This approach leads to Chapman-Kolmogorov / Fokker-Planck equations that are hard to work with analytically and numerically \cite{MR3190511}, the main difficulty being the existence of blowups in finite time for the system, due to the possibility of having a macroscopic number of neurons simultaneously close to the firing threshold $V_{ thr}$. Several extensions with random firing rates have been proposed in the literature, \cite{MR3311484,Cormier2020} and references therein, but the longtime analysis of such models (especially when it concerns the effect of inhibition) remains not fully understood (see nonetheless the recent significant advances in this direction in \cite{Cormier2020}).

\subsubsection{Hawkes processes}
\label{sec:Hawkes_inhibition}
The model we consider in \eqref{eq:model_intro} enters into the framework of Hawkes processes \cite{MR0378093,MR3449317}. The point of view adopted here is to model directly the stochastic intensity  \cite{brillinger:88,chornoboy.schramm.karr:88} by identifying a neuron's sequence of strereotyped action potentials with the point process \((t_{j,l})_{j,l}\) in \eqref{eq:Intro2}. In this context, the Hawkes framework generically reads
\begin{equation}\label{eq:Hawkes1}
\lambda_t^i = \Phi_{i}\left( \sum_{j \in \mathcal{S}_{i,E}} \int_{0}^t h_{j\rightarrow i}(t-u) \rmd Z_u^j \right)\, ,
\end{equation}
where \(\lambda_t^i\) is the intensity of neuron \(i\), $\Phi_{i}$ a positive function, \(Z_{j,t}\) is the counting process associated with neuron \(j\), \(h_{j\rightarrow i}(t)\) is the \emph{synaptic kernel} associated with the synapse between neurons \(j\) and \(i\). To qualitatively match the case of the integrate and fire model  \eqref{eq:IF1}, we would take \(\Phi_{i}(x)=\mu_0^i+x\) where $\mu_{i}^{0}$ is the basal rate of neuron $j$ and \(h_{j\rightarrow i}(t) = (w_{j\rightarrow i}/\tau) \, \exp -t/\tau\).
It is not really possible to account for the vast recent literature on mean-field analysis of Hawkes processes in neuroscience, we refer to e.g. \cite{MR1411506,MR3449317,DITLEVSEN20171840,CHEVALLIER20191,CHEVALLIER20173870} and references therein for further details. Note that this framework allows for a large versatility in the modeling, since it can accommodate for various features such as age-dependent behaviors/refractory periods \cite{CHEVALLIER20173870,Chevallier2017,raad2020stability2} or spatially-structured dynamics \cite{CHEVALLIER20191}.  

Intensity-based models such as \eqref{eq:Hawkes1} lead to much better looking equations, that are also easier to work with numerically \cite{hawkes:71,chornoboy.schramm.karr:88,MR3449317}: in the mean-field case, one naturally obtains large population limits in terms of inhomogeneous Poisson processes with intensity solving convolution equations \cite{MR3449317}. A particularly important instance of \eqref{eq:Hawkes1} that is explicitly solvable concerns the linear case where $ \Phi_{ i}(x)= \mu_{ i}+ x$ with nonnegative synaptic kernels $h_{ i\to j}$: the corresponding convolution equation for the mean-field intensity becomes linear and one can explicitly compute its limit as $t\to\infty$, \cite{MR3449317}, Th.~10 and~11.

As far as inhibition is concerned, following the approach of Section~\ref{sec:voltage_models}, one would rightfully criticize the use of a Hawkes process  by pointing out the difficulty, not to say the impossibility, of modifying it to accommodate inhibition without loosing the nice analytical properties  of linear Hawkes processes \cite{chornoboy.schramm.karr:88,MR1411506}.  In this context, the main strategy followed in the literature so far has been to model inhibition by introducting kernels $h_{ j\to i}(\cdot)$ in \eqref{eq:Hawkes1} taking negative values \cite{costa2020renewal,raad2020stability,raad2020stability2,reynaud2013inference,bonnet2021maximum} or positive kernels $h$ (e.g. of Erlang type) multiplied by random and possibly negative coefficients \cite{DITLEVSEN20171840,Duarte:2016aa,pfaffelhuber2021mean}. This is what we could qualify as an \emph{additive} inhibition, in the sense that the intensity of a neuron is the (temporal) additive superposition of several memory kernels $h_{ i\to j}$, in the same spirit as the scheme suggested by \eqref{eq:Intro1} for integrate and fire models. In order to keep $ \lambda_{ t}^{ i}$ positive, one crucially needs here to modulate this superposition by a positive (necessarily nonlinear) synaptic kernel $ \Phi_{ i}$, (e.g. $\Phi_{i}(x)=\mu^{i}+x_{+}$ where $x_{ +}=\max(x, 0)$). The major counterpart is that one looses the nice analytical properties which makes the longtime analysis of linear Hawkes models tractable.

The strategy of the present Model \eqref{eq:model_intro} is different, as it relies on a \emph{multiplicative inhibition}, in the spirit of the scheme provided by  \eqref{eq:Intro2} for integrate and fire models: we  view the inhibitory inputs as 'modulations' of the excitatory ones and  describe inhibition as a multiplicative factor between 0 and 1 applied to the excitatory inputs. In particular, the multiplicative structure of \eqref{eq:model_intro} no longer requires the synaptic kernels to take negative values: we may and we will only consider nonnegative kernels $h_{ i}$, $i=1,\ldots, 4$. In this way, the non-negativity of the intensity is then automatically preserved, together with the 'nice' properties of the canonical linear Hawkes process: we may still take advantage of the positivity of the synaptic kernels  and the respective monotonicities of $ \Phi_{ A}$ and $ \Phi_{ B\to A}$ (positive and decreasing) to obtain asymptotic results similar to the linear case.

 \subsection{The effect of inhibition: quenching of supercriticality and oscillations}
 \label{sec:effect_inhibition}
We briefly review here the biological features concerning the effects of inhibition that one can retrieve from Model~\eqref{eq:model_intro}. We refer to the sequel for more details. The first main observation that one can make from Model~\eqref{eq:model_intro} is that inhibition has an ability to quench supercriticality: Model~\eqref{eq:model_intro} can accommodate with situations where population $A$ alone (that is isolated from population $B$, i.e. $ \Phi_{ B\to A}\equiv 1$) is supercritical, in the sense that the intensity of $A$ diverges as $t\to\infty$, whereas the addition of inhibition and retroaction forces the system to go back to subcriticality. We refer in particular to Theorem~\ref{th:A_subcritical} below, where it can be shown rigorously that \eqref{eq:model_intro} cannot be supercritical in presence of both inhibition and retroaction. This corresponds to a biological situation commonly observed, where epileptic seizures may be observed when inhibition is altered (see \cite{luo:15} and Remark~\ref{rem:effect_inhib} below).

Another particularly interesting feature of Model \eqref{eq:model_intro} is that inhibition may lead to oscillations, a ubiquitous phenomenon in the nervous system \cite{buzsaki:06}. Emergence of oscillations in mean-field processes is a longstanding issue in the literature, that is not specific to neuroscience applications, as it reflects a common feature of self-organization in statistical physics (see e.g. \cite{Collet:2015aa,Collet2016,Meylahn:2018aa,MR3336876}), biological models of synchrony \cite{Carmona2019,aleandri2019} or neuroscience \cite{Luon2019,2018arXiv181100305L,DITLEVSEN20171840}. We mention here specifically \cite{DITLEVSEN20171840} where global oscillations have been proven for interacting Hawkes processes with circular connectivity.
 
\subsection{About simulations}
The Python code, together with a detailed documentation, that has been used for the simulations made in the present paper is available at \url{https://plmlab.math.cnrs.fr/xtof/hawkes-x-hawkes}. 

\subsection{Organization of the paper}

The paper is organized as follows: the precise definitions and main hypotheses are given in Section~\ref{sec:wellposedness_large_pop}, where a generalized version \eqref{eq:model} of \eqref{eq:model_intro} is introduced. We first address well-posedness results for the microscopic system (Section~\ref{sec:well_posedness_part_syst}) and for its mean-field limit (Section~\ref{sec:mean_field_limit}) as well as propagation of chaos estimates. The second purpose of the paper it to give a detailed analysis, in the particular case of Model \eqref{eq:model_intro}, of the behavior as $t\to \infty$ of the limiting mean-field process in Section~\ref{sec:MFL}. Focus is put on the presence/absence of inhibition and retroaction: the easier cases of decoupled dynamics are first treated in Section~\ref{sec:nonfully_coupled}. The model with full connectivity (that is with inhibition and feedback) is considered in Section~\ref{sec:fully_coupled}. Fluctuations results and some resulting statistical tests for inhibition is given in Section~\ref{sec:fluctuations}. Examples of dynamics that fulfil the hypotheses of Section~\ref{sec:MFL} are then given in Section~\ref{sec:examples}.  We give finally in Section~\ref{sec:oscillations} intuition and numerical evidence that inhibition and retroaction may lead to oscillations, for some particular instances of the inhibition kernel $ \Phi_{ B\to A}$. Proofs of the main results are gathered in Section~\ref{sec:proofs_part_MF} and Section~\ref{sec:proofs_longtime}.

\section{Well-posedness and large population behavior}
\label{sec:wellposedness_large_pop}
\subsection{Model and assumptions\label{sec:modgen}}

Let $\alpha\in(0,1)$ and $N=N_{ A}+N_{ B}\gg 1$ with $ N_{ A}= \alpha N$ and $N_{ B}= (1- \alpha)N$ the respective size of population $A$ and $B$.
The neurons within population $A$ (resp. population $B$) are indexed by $i=1,\ldots, N_{ A}$ (resp. $i=N_{ A}+1,\ldots, N$). Consider on a filtered probability space $( \Omega, \mathcal{ F}, \left(\mathcal{ F}_{ t}\right)_{ t\geq 0}, \mathbf{ P})$ an independent family of i.i.d. Poisson measures $(\pi_{i}(\rmd s,\rmd z), i\in\{1,\ldots,N\})$ with intensity measure $\rmd s\times \rmd z$ on $[0,\infty)\times[0,\infty)$. Let $(x, y) \mapsto F(x, y)$ and $(x, y) \mapsto G(x, y)$ two nonnegative functions defined on $(0,\infty)^{2}$. Consider  the family of c\`adl\`ag $(\mathcal{ F}_{ t})_{ t\geq0}$ point processes $(Z_{t}^{ i})_{ t\geq0, i=1, …, N}$ given by
\begin{equation}
\label{eq:Hawkes}
Z_{t}^{i}= \int_{0}^{t}\int_{0}^{\infty}\mathbf{1}_{z\le \lambda_{ s}^{ i}}\pi_{i}(\rmd s,\rmd z), i=1, …, N,
\end{equation}
where the intensity $ \lambda^{ i}$, $i=1,…, N$, is given as
\begin{equation}
\label{eq:model}
\begin{cases}
\lambda_{ t}^{ i}= \lambda_{  t}^{A}:= F  \left(\frac{ 1}{ N} \underset{1\le j\le N_{A}}{\sum}\int_{ 0}^{t^{ -}} h_{ 1}(t-u) {\rmd Z_{u}^{j}}), \frac{ 1}{ N}  \underset{N_{A}+1\le j\le N}{\sum} \int_{ 0}^{t^{ -}} h_{ 2}(t-u) {\rmd Z_{u}^{j}})\right), &i=1, \ldots, N_{ A},\\
\lambda_{ t}^{ i}= \lambda_{  t}^{B}:= G\left(\frac{ 1}{ N}  \underset{N_{A}+1\le j\le N}{\sum} \int_{ 0}^{t^{ -}} h_{ 3}(t-u) {\rmd Z_{u}^{j}}), \frac{ 1}{ N}  \underset{1\le j\le N_{A}}{\sum} \int_{ 0}^{t^{ -}} h_{ 4}(t-u) {\rmd Z_{u}^{j}}\right),&i=N_{ A}+1,\ldots, N.
\end{cases}
\end{equation} 
In the sequel, we sometimes write $Z_{t}^{i,A}$ (resp. $Z_{t}^{i,B}$) in place of $Z_{t}^{i}$ for $i=1, \ldots, N_{ A}$ (resp. $i=N_{A}+1,\ldots, N$) in order to specify the population the $i$th neuron belongs to. Let $p\geq1$ and $T>0$. For any function $h$ on $[0, \infty)$ such that $ \left\vert h \right\vert^{ p}$ is locally integrable (resp. integrable) denote by $ \left\Vert h \right\Vert_{p, T}$ (resp. $ \left\Vert h \right\Vert_{p}$) as the $L^{ p}$ norm of $h$ on $[0, T]$ (resp. on $[0, \infty)$):
\begin{align*}
\left\Vert h \right\Vert_{ p, T} &:= \left( \int_{ 0}^{T} \left\vert h(s) \right\vert^{ p} {\rm d}s\right)^{ \frac{ 1}{ p}},\hspace{2cm}\mbox{resp. }\ \ 
\left\Vert h \right\Vert_{ p} := \left( \int_{ 0}^{\infty} \left\vert h(s) \right\vert^{ p} {\rm d}s\right)^{ \frac{ 1}{ p}}.
\end{align*}
For any locally bounded (resp. bounded) function $h$, denote in a same way $
\left\Vert h \right\Vert_{ \infty, T} :=\sup_{ s\in [0, T]} \left\vert h(s) \right\vert$ and $
\left\Vert h \right\Vert_{ \infty} :=\sup_{ s\geq0} \left\vert h(s) \right\vert.$
For any Lipschitz-continuous function $f$ on $ \mathbb{ R}$, we denote by $ \left\Vert f \right\Vert_{ L}$ the Lispchitz constant of $f$. Throughout the paper, Assumptions~\ref{ass:main_h} and~\ref{ass:main_FG} below will be required.
\begin{assumption}[Assumptions on $h_{ i}$, $i=1, \ldots, 4$]\label{ass:main_h}
The kernels $h_{ i}$, $i=1, \ldots, 4$ on $[0, +\infty)$ are such that
\begin{equation}
\label{def:H}
H(t):={\alpha} h_{ 1}(t)+{(1-\alpha)} h_{ 2}(t)+{(1-\alpha)} h_{ 3}(t)+{\alpha} h_{ 4}(t), \quad t\geq0,
\end{equation}
is locally integrable, i.e. $\| H\|_{1,T}<\infty$, $T>0$. 
\end{assumption}
At this point, note that we need not assume that $h_{ i}\geq0$, since we are only concerned for now with well-posedness results concerning both the particle system \eqref{eq:model} and its mean-field limit as well as propagation of chaos results. The positivity of the kernels will only be assumed for the study of the longtime behavior of the mean-field process in Section~\ref{sec:MFL}. However, in view of the discussion of Section~\ref{sec:Hawkes_inhibition}, the fact that the $h_{ i}$ might take negative values is not something that we have in mind in this paper.
\begin{assumption}[Assumptions on $F$ and $G$]\label{ass:main_FG}
Suppose that $F$ and $G$ are nonnegative functions such that the following holds: there exist constants $c_{ i}\geq0, i=1, \ldots, 5$ such that
\begin{align}
\sup_{ x,y\geq0} \frac{ F(x,y)}{ c_{ 1}(x+y) + c_{ 2}} &\leq 1, \label{hyp:F_bound}\\
\sup_{ x\neq x^{ \prime}\geq0,y\geq0} \frac{ \left\vert F(x, y) - F(x^{ \prime}, y) \right\vert}{ c_{ 3}\left\vert x-x^{ \prime} \right\vert}&\leq 1,\label{hyp:F_Lip_x}\\
\sup_{ x\geq0, y\neq y^{ \prime}\geq0} \frac{ \left\vert F(x,y) - F(x, y^{ \prime})\right\vert}{ (c_{ 4}x+c_{ 5}) \left\vert y-y^{ \prime} \right\vert} &\leq 1 \label{hyp:F_Lip_y}
\end{align}

Suppose also that $G$ is globally Lipschitz in both variables: for some $c_{ 6}>0$
\begin{equation}
\label{hyp:G_Lip}
\left\vert G(x, y)- G(x^{ \prime}, y^{ \prime} )\right\vert \leq c_{ 6} \left( \left\vert x-x^{ \prime} \right\vert + \left\vert y-y^{ \prime} \right\vert\right),\ x,x^{ \prime},y,y^{ \prime}\geq0.
\end{equation}
\end{assumption}
Note that an immediate consequence of \eqref{hyp:G_Lip} is that $G$ is sublinear: for $c_{ 7}:= G(0, 0)$,
\begin{equation}
\label{hyp:G_bound}
G(x, y) \leq c_{ 7} + c_{ 6} \left(x + y\right),\ x,y\geq0.
\end{equation}
\begin{example}
\label{ex:F_Lip}
Any globally Lipschitz function $F$ (with Lipschitz constant $ \left\Vert F \right\Vert_{ L}$) satisfies Assumption~\ref{ass:main_FG} for $c_{ 1}= c_{ 3}=c_{ 5}=\left\Vert F \right\Vert_{ L}$, $c_{ 2}=F(0,0)$ and $c_{ 4}=0$.
\end{example}
In place of Assumption~\ref{ass:main_FG}, we will sometimes specify to a more restrictive model, which corresponds to \eqref{eq:model_intro}:
\begin{assumption}[Multiplicative model]
\label{ass:model_intro}
Suppose  that $F$ and $G$ satisfy
\begin{equation}
\label{eq:ex_F_G}
F(x, y)= \Phi_{ A}(x) \Phi_{ B\to A}(y),\ G(x, y)= \Phi_{ B}(x) + \Phi_{ A\to B}(y),
\end{equation}
where $\Phi_{ A}$, $\Phi_{ B\to A}$, $\Phi_{ B}$ and $\Phi_{ A\to B}$ are nonnegative functions, each of them globally Lipschitz with $\Phi_{ B\to A}$ bounded (and with no loss of generality we assume $ 0 \leq \Phi_{ B\to A} \leq 1$).
\end{assumption}
Assumption~\ref{ass:model_intro} is a particular case of Assumption~\ref{ass:main_FG}, for $c_{ 1}=c_{ 3}:= \left\Vert \Phi_{ A} \right\Vert_{ L}$, $c_{ 2}:= \Phi_{ A}(0)$, $c_{ 4}:= \left\Vert \Phi_{ B\to A} \right\Vert_{ L} \left\Vert \Phi_{ A} \right\Vert_{ L}$ and $ c_{ 5}:=\left\Vert \Phi_{ B\to A} \right\Vert_{ L} \Phi_{ A}(0)$. Although it is not required for the moment, the examples to have in mind to model inhibition is $ \Phi_{ B\to A}$ being nonincreasing with $\Phi_{B\to A}(y)\to0$ as $y\to\infty$ and $ \Phi_{ A\to B}$ nondecreasing with $ \Phi_{ A\to B}(y)\to+\infty$ as $y\to \infty$. A further particular example is
\begin{example}
\label{ex:linear}
A case of particular interest is for $F$ and $G$ satisfying Assumption~\ref{ass:model_intro} with $ \Phi_{ A}$ and $ \Phi_{ B}$ being linear:
\begin{equation*}
\Phi_{ A}(x)= \mu_{ A} +x,\ \Phi_{ B}(x)= \mu_{ B} +x,\ x\geq 0,
\end{equation*}
where $ \mu_{ A},\mu_{ B}\geq0$. In this example, suppose in addition that $h_{ i}\geq0$ for $i=1, \ldots, 4$.
\end{example}
Assumption~\ref{ass:model_intro} and Example~\ref{ex:linear}  (for which $c_{1}=c_{3}=c_{4}=1$ and $c_{2}=c_{2}=\mu_{A}$) are considered in detail in Section \ref{sec:MFL}. In the remaining of this Section, we only require Assumption~\ref{ass:main_FG}.

\subsection{Well-posedness of the particle system}
\label{sec:well_posedness_part_syst}
The first result concerns well-posedness of the particle system \eqref{eq:Hawkes}:
\begin{proposition}\label{prop:ExistUniq}
Suppose Assumptions~\ref{ass:main_h} and~\ref{ass:main_FG} hold. Then, conditional on the Poisson measures $ ( \pi_{ i})_{ i=1, \ldots, N}$, there exists a pathwise unique Hawkes process $(Z^{i})_{ 1\le i\le N}$ as in \eqref{eq:Hawkes} such that $\sum_{i=1}^{N}\mathbf{E}[Z_{t}^{i}]<\infty$, for all $t\ge0$.
\end{proposition}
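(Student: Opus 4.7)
The plan is to construct the process by a Picard iteration based on the driving Poisson measures, in the spirit of \cite{MR3449317}. Setting $Z^{i,(0)}\equiv 0$, define inductively, for $n\geq 0$,
\[
Z^{i,(n+1)}_{t}:=\int_{0}^{t}\!\!\int_{0}^{\infty}\mathbf{1}_{z\leq \lambda^{i,(n)}_{s}}\,\pi_{i}(\rmd s,\rmd z),\qquad i=1,\ldots,N,
\]
where $\lambda^{i,(n)}$ is computed from $(Z^{j,(n)})_{j}$ through \eqref{eq:model}. The task then reduces to showing that $(Z^{i,(n)})_{n}$ is Cauchy in a suitable sense, with a limit satisfying \eqref{eq:Hawkes}--\eqref{eq:model} and having locally integrable expectation; pathwise uniqueness follows by the same coupling applied to any two solutions.

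The first step is a uniform a priori bound on $u^{(n)}_{A}(t):=\mathbf{E}[Z^{1,(n)}_{t}]$ and $u^{(n)}_{B}(t):=\mathbf{E}[Z^{N,(n)}_{t}]$, which by exchangeability only depend on the chosen population. Using \eqref{hyp:F_bound}, \eqref{hyp:G_bound} and Fubini's theorem, one derives coupled integral inequalities
\begin{align*}
u^{(n+1)}_{A}(t)&\leq c_{2}t+c_{1}\!\int_{0}^{t}\!\!\int_{0}^{s}\bigl(\alpha|h_{1}(s-u)|\,\rmd u^{(n)}_{A}(u)+(1-\alpha)|h_{2}(s-u)|\,\rmd u^{(n)}_{B}(u)\bigr)\rmd s,\\
u^{(n+1)}_{B}(t)&\leq c_{7}t+c_{6}\!\int_{0}^{t}\!\!\int_{0}^{s}\bigl((1-\alpha)|h_{3}(s-u)|\,\rmd u^{(n)}_{B}(u)+\alpha|h_{4}(s-u)|\,\rmd u^{(n)}_{A}(u)\bigr)\rmd s.
\end{align*}
Since the aggregated kernel $H$ of \eqref{def:H} is locally integrable by Assumption~\ref{ass:main_h}, a generalized Gronwall/renewal argument yields $u^{(n)}_{A}(T)+u^{(n)}_{B}(T)\leq C_{T}$ uniformly in $n$. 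A minor strengthening of the same calculation produces analogous bounds on the second moments $\mathbf{E}[(Z^{i,(n)}_{T})^{2}]$, which will be required in the next step.

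The Cauchy property is obtained by coupling consecutive iterations through the same Poisson measures, which produces the standard estimate
\[
\delta^{n}(t):=\mathbf{E}\!\left[\sup_{s\leq t}\bigl(|Z^{1,(n+1)}_{s}-Z^{1,(n)}_{s}|+|Z^{N,(n+1)}_{s}-Z^{N,(n)}_{s}|\bigr)\right]\leq\int_{0}^{t}\mathbf{E}\bigl[|\lambda^{1,(n)}_{s}-\lambda^{1,(n-1)}_{s}|+|\lambda^{N,(n)}_{s}-\lambda^{N,(n-1)}_{s}|\bigr]\rmd s.
\]
The Lipschitz bound \eqref{hyp:G_Lip} on $G$ and the condition \eqref{hyp:F_Lip_x} on $F$ contribute standard linear terms in $\delta^{n-1}$. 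The main obstacle is the non-standard condition \eqref{hyp:F_Lip_y}, whose prefactor $c_{4}x+c_{5}$ involves the random excitatory history $x=\frac{1}{N}\sum_{j\in A}\int h_{1}(s-u)\rmd Z^{j,(n-1)}_{u}$, which is not deterministically bounded. I would circumvent this either via Cauchy-Schwarz, splitting $\mathbf{E}[x|y-y'|]\leq \mathbf{E}[x^{2}]^{1/2}\mathbf{E}[|y-y'|^{2}]^{1/2}$ and invoking the $L^{2}$ bounds of the previous step (with an $L^{2}$ analogue of $\delta^{n}$ satisfying the same kind of contraction), or by introducing a stopping time $\tau_{K}:=\inf\{t\geq 0\colon \frac{1}{N}\sum_{j\in A}\int_{0}^{t}|h_{1}(t-u)|\rmd Z^{j}_{u}>K\}$, proving contraction on $[0,T\wedge\tau_{K}]$ and then letting $K\to\infty$ via the $L^{1}$ control already obtained.

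Either route produces a linear Volterra bound $\delta^{n}(T)\leq C_{T}\int_{0}^{T}K_{T}(s)\,\delta^{n-1}(s)\rmd s$ with $K_{T}\in L^{1}([0,T])$, hence $\sum_{n}\delta^{n}(T)<\infty$ by iteration. Passing to the limit in \eqref{eq:model} yields a solution of \eqref{eq:Hawkes} with $\mathbf{E}[Z^{i}_{T}]\leq C_{T}$; pathwise uniqueness follows by applying the very same coupling inequality to any two solutions driven by the same Poisson measures. The only genuinely delicate point throughout is the $x$-dependence of the Lipschitz coefficient in \eqref{hyp:F_Lip_y}, which forces either a second-moment a priori estimate or the localization step described above.
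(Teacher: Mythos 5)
Your overall strategy --- a Picard iteration with first- and second-moment a priori bounds --- is a genuinely different route from the paper's. The paper constructs $(Z^i)$ atom-by-atom by inductive thinning of the driving Poisson measures, and then rules out explosion by stochastically dominating the process with an auxiliary \emph{linear} Hawkes process whose existence is inherited from \cite{MR3449317}, Theorem~6. You correctly identify the crucial obstacle, namely the $x$-dependent Lipschitz prefactor $c_4 x + c_5$ in \eqref{hyp:F_Lip_y}, which is exactly why the paper abandons the fixed-point route (it points this out explicitly right after the statement). However, neither of your two proposed workarounds actually closes the argument as written.

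Regarding the Cauchy--Schwarz route: you would need an $L^2$ Volterra contraction for the coupled iterates, but coupled point processes do not admit such a contraction. If $Z$ and $\tilde Z$ are driven by the same Poisson measure with intensities $\lambda$ and $\tilde\lambda$, then $|Z_t - \tilde Z_t|$ is dominated by a counting process $D_t$ with intensity $|\lambda_s - \tilde\lambda_s|$, and $\mathbf{E}[D_t^2]$ contains the \emph{linear} term $\mathbf{E}\big[\int_0^t |\lambda_s - \tilde\lambda_s|\,\rmd s\big]$ coming from the predictable quadratic variation of $D$. Consequently, the putative $L^2$ distance $\delta^{n}_2$ is bounded by a mixture of $\delta^{n-1}_1$ (the $L^1$ distance) and $\delta^{n-1}_2$, while $\delta^{n}_1$, through your Cauchy--Schwarz split of $\mathbf{E}[(c_4 x + c_5)|y-y'|]$, is in turn bounded by a mixture of $\delta^{n-1}_1$ and $\sqrt{\delta^{n-1}_2}$. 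This coupled, nonlinear system is not of the form $\delta^n(T) \leq C_T \int_0^T K_T(s)\,\delta^{n-1}(s)\,\rmd s$ with $K_T\in L^1$, and summability of $\sum_n \delta^n$ does not follow from the standard renewal-Gronwall lemma you invoke.

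Regarding the localization route: the stopping time $\tau_K$ as written involves the unknown solution $Z^j$, but within the Picard scheme each iterate $Z^{j,(n)}$ changes with $n$, so one really has a family $\tau_K^{(n)}$. To make the scheme work you would need $\inf_n \tau_K^{(n)}>0$ a.s.\ and $\tau_K^{(n)}\to\infty$ uniformly in $n$ as $K\to\infty$, which is precisely a uniform-in-$n$ stochastic domination of all the iterates. Establishing such a domination (by a linear Hawkes process driven by the same Poisson measures, using $F(x,y) \leq c_1(x+y)+c_2$ and $G(x,y)\leq c_6(x+y)+c_7$) is essentially the content of the paper's proof; carrying out the localization step honestly would re-prove that argument rather than circumvent it. You should either develop that domination explicitly (which effectively is the paper's thinning proof) or abandon the Picard route.
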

In the case where $F$ is globally Lipschitz (Example~\ref{ex:F_Lip}), existence and uniqueness of \eqref{eq:Hawkes} can be proven directly through a fixed-point procedure as in \cite{MR3449317}. Then, Proposition~\ref{prop:ExistUniq} is a direct consequence of \cite{MR3449317}, Theorem~6, up to minor notational changes. The additional technical difficulty in our case (consider e.g. Assumption~\ref{ass:model_intro}) is that even though $ \Phi_{ A}$ and $ \Phi_{ B\to A}$ are globally Lipschitz, the product $ \Phi_{ A} \Phi_{ B\to A}$ is not and the usual result of \cite{MR3449317} no longer applies. The key point to note is that $ 0 \leq F(x, y) \leq c_{ 1}(x+y) + c_{ 2}$, one can stochastically dominate our process by a \emph{linear Hawkes process} that is known to exist. Therefore, the proof relies on a thinning procedure (similar to \cite{costa2020renewal}) and is carried out in Section~\ref{sec:proof_WP_part}. For similar ideas see  \cite{raad2020stability2} Prop. 1.4 or \cite{costa2020renewal} Prop.	 2.

\subsection{The mean-field limit}
\label{sec:mean_field_limit}
Following a standard procedure for mean-field systems \cite{SznitSflour}, it is easy to derive the limit nonlinear process of \eqref{eq:Hawkes} as $N\to\infty$. In the Hawkes setting, it reduces to an inhomogeneous Poisson process \cite{MR3449317}. Informally speaking, as $N\to \infty$, the empirical mean in \eqref{eq:model} concentrates around an expectation and it is natural to introduce the following: let $(\pi, \tilde{ \pi})$ be independent Poisson point processes on $ [0, +\infty)^{ 2}$ with intensity $ {\rm d}s {\rm d}z$. Consider 
\begin{equation}
\label{eq:barZ_gen}
\begin{cases}
\bar Z_{t}^{A}&= \int_{0}^{t}\int_{0}^{\infty}\mathbf{1}_{z\le F\left(\alpha\int_{ 0}^{s} h_{ 1}(s-u) {\rm d} \mathbf{ E} (\bar Z_{u}^{A}), (1- \alpha) \int_{ 0}^{s} h_{ 2}(s-u) {\rm d} \mathbf{ E} \left(\bar Z_{u}^{B}\right)\right)}\pi(\rmd s,\rmd z),\\
\bar Z_{t}^{B}&= \int_{0}^{t}\int_{0}^{\infty}\mathbf{1}_{z\le G \left((1- \alpha) \int_{ 0}^{s} h_{ 3}(s-u) {\rm d} \mathbf{ E} \left(\bar Z_{u}^{B}\right), \alpha\int_{ 0}^{s} h_{ 4}(s-u) {\rm d} \mathbf{ E} \left(\bar Z_{u}^{{A}}\right)\right)}\tilde{ \pi}(\rmd s,\rmd z),
\end{cases}\ t\geq0\ .
\end{equation}
The nonlinearity of the process $\bar Z:= (\bar Z^{ A}, \bar Z^{ B})$ lies in the fact that it interacts with its own expectation $(m_{  t}^{A}, m_{  t}^{B}):= \left( \mathbb{ E} \left(\bar Z_{t}^{A}\right), \mathbb{ E} \left(\bar Z_{t}^{B}\right)\right)$ which solves
\begin{equation}
\label{eq:mtAB}
\begin{cases}
m_{  t}^{A}&= \int_{0}^{t}{F \left( \alpha\int_{ 0}^{s} h_{ 1}(s-u) {\rm d} m_{  u}^{A},(1- \alpha) \int_{ 0}^{s} h_{ 2}(s-u) {\rm d} m_{  u}^{B} \right)}\rmd s,\\
m_{  t}^{B}&= \int_{0}^{t}  G \left((1- \alpha) \int_{ 0}^{s} h_{ 3}(s-u) {\rm d} m_{  u}^{B}, \alpha\int_{ 0}^{s} h_{ 4}(s-u) {\rm d} m_{  u}^{A}\right) \rmd s.
\end{cases}
\end{equation}
The following result is a well-posedness result for both \eqref{eq:barZ_gen} and \eqref{eq:mtAB}:
\begin{proposition}
\label{prop:mAB}
Under Assumptions~\ref{ass:main_h} and~\ref{ass:main_FG}, for all $T>0$, there exists a unique nondecreasing in both coordinates locally bounded solution $(m_{  t}^{A}, m_{  t}^{B})_{ t\in [0, T]}$ to \eqref{eq:mtAB} that is of class $ \mathcal{ C}^{ 1}$ on $[0, T]$. Moreover, for any given independent Poisson point processes $ \pi$ and $ \tilde{ \pi}$ on $[0, +\infty)$, there exists a pathwise unique process $(\bar Z_{t}^{A}, \bar Z_{t}^{B})$ to \eqref{eq:barZ_gen} such that $( \mathbb{ E}(\bar Z_{t}^{A}), \mathbb{ E}(\bar Z_{t}^{B}))$ is locally bounded.
\end{proposition}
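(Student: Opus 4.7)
The plan is to first solve the deterministic self-consistency system \eqref{eq:mtAB} for $(m^A, m^B)$, and then to construct $(\bar Z^A, \bar Z^B)$ from the resulting deterministic intensities by standard Poisson thinning. To handle \eqref{eq:mtAB}, I would rephrase it as a fixed-point equation for the intensities $f_A := \dot m^A$, $f_B := \dot m^B$, namely
\begin{equation*}
f_A(t) = F\bigl(\alpha (h_1 * f_A)(t),\, (1-\alpha)(h_2 * f_B)(t)\bigr), \quad f_B(t) = G\bigl((1-\alpha)(h_3 * f_B)(t),\, \alpha (h_4 * f_A)(t)\bigr),
\end{equation*}
to be solved in $L^\infty([0,T])^2$. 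Since $h_i \in L^1_{\rm loc}$ by Assumption~\ref{ass:main_h}, the convolutions $h_i * f_j$ are continuous in $t$, so any $L^\infty$ solution is in fact continuous, and the functions $m^A, m^B$ recovered by integration are automatically $\mathcal{C}^1$ and nondecreasing (by $F, G \geq 0$).

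The first step is an a priori bound. Feeding any non-negative solution into the sublinear estimates \eqref{hyp:F_bound} and \eqref{hyp:G_bound} and summing yields a scalar convolution inequality
\begin{equation*}
\bigl(f_A + f_B\bigr)(t) \leq K_1 + K_2 \int_0^t H(t-u) \bigl(f_A + f_B\bigr)(u)\, \rmd u, \quad t \in [0,T],
\end{equation*}
with $H$ as in \eqref{def:H}. A Volterra-type Gronwall argument (valid since $\|H\|_{1,T} < \infty$) gives $\|f_A\|_{\infty,T} + \|f_B\|_{\infty,T} \leq M_T$ for some finite $M_T$ depending only on $T$ and the constants of Assumption~\ref{ass:main_FG}. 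This a priori bound also reduces global uniqueness to uniqueness within the closed ball $\mathcal{B}_{M_T} \subset L^\infty([0,T])^2$ of pairs satisfying it.

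On $\mathcal{B}_{M_T}$ I would run a Picard contraction. The \emph{main obstacle} is that $F$ is not globally Lipschitz: the partial Lipschitz constant in $y$ given by \eqref{hyp:F_Lip_y} involves the \emph{unbounded} coefficient $(c_4 x + c_5)$, an artefact of the multiplicative structure $F = \Phi_A \Phi_{B\to A}$ of Assumption~\ref{ass:model_intro}. However, on $\mathcal{B}_{M_T}$ the argument $x = \alpha (h_1 * f_A)(t)$ is uniformly bounded by $\alpha M_T \|h_1\|_{1,T}$, so this partial Lipschitz coefficient is dominated by a finite constant depending only on $T$. Combining \eqref{hyp:F_Lip_x}, \eqref{hyp:F_Lip_y}, and \eqref{hyp:G_Lip}, the fixed-point map $\Psi$ defined by the right-hand sides above satisfies, for any two $f, \tilde f \in \mathcal{B}_{M_T}$, an estimate of the form $\|\Psi(f)(t) - \Psi(\tilde f)(t)\| \leq C \int_0^t H(t-u)\|f(u) - \tilde f(u)\|\, \rmd u$ in $\ell^1$-norm on $\mathbb{R}^2$. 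In the weighted norm $\|f\|_\lambda := \sup_{t \in [0,T]} e^{-\lambda t}\|f(t)\|$, this translates into a contraction factor $C \|e^{-\lambda \cdot} H\|_{1,T}$, which is made strictly less than $1$ for $\lambda$ large enough by dominated convergence. Existence and uniqueness in $\mathcal{B}_{M_T}$ follow. Finally, given the deterministic $(m^A, m^B)$, the intensities in \eqref{eq:barZ_gen} become deterministic continuous functions of time, so pathwise existence and uniqueness of $(\bar Z^A, \bar Z^B)$ follow by the same thinning argument as in Proposition~\ref{prop:ExistUniq}, and $\mathbb{E}[\bar Z_t^{A,B}] = m_t^{A,B}$ holds by construction via \eqref{eq:mtAB}.
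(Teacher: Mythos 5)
Your overall strategy is sound and closely parallels the paper's: both reformulate \eqref{eq:mtAB} as a fixed-point equation for the intensities $(\lambda^A,\lambda^B)$, both use the same Gronwall--Volterra a priori estimate driven by $H$, and both exploit this a priori bound to tame the unbounded partial Lipschitz coefficient $(c_4 x + c_5)$ in \eqref{hyp:F_Lip_y}, which is the genuine difficulty compared with the globally Lipschitz Hawkes setting. The only real divergence is cosmetic: the paper runs an explicit Picard iteration from $(0,0)$, bounds all iterates uniformly via Lemma~23 of \cite{MR3449317}, and shows the increments $\delta_t^n$ form a summable series, whereas you reorganize the same convolution estimate into a weighted-$L^\infty$ Banach contraction. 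The construction of $(\bar Z^A,\bar Z^B)$ from the deterministic $(m^A,m^B)$ by thinning, together with the reduction of its pathwise uniqueness to uniqueness of \eqref{eq:mtAB}, is also the paper's argument.

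There is, however, a gap in the way you set up the fixed-point step. You claim existence by a Picard contraction ``on $\mathcal{B}_{M_T}$,'' but the a priori bound only shows that any \emph{solution} lies in $\mathcal{B}_{M_T}$ --- it does not show that the map $\Psi$ sends $\mathcal{B}_{M_T}$ into itself, which is needed for the Banach fixed-point theorem to produce a fixed point. Indeed, for $f\in\mathcal{B}_{M_T}$ the sublinear bounds only give $(\Psi f)_A(t) + (\Psi f)_B(t) \leq K_1 + K_2 \|H\|_{1,T}\, M_T$, and this exceeds $M_T$ whenever $K_2\|H\|_{1,T}\geq 1$, a regime the hypotheses do not exclude. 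The standard repair is to iterate from $f^0\equiv 0$ and invoke the convolution Gronwall lemma (Lemma~23(iii) of \cite{MR3449317}) to show that $\sup_n\|\Psi^n(0)\|_{\infty,T}$ is finite --- this is precisely what the paper does --- so that the iterates remain in a (possibly larger) ball on which your Lipschitz estimate and weighted-norm contraction apply; alternatively one can verify invariance of the time-dependent tube $\{f:\, f_A(t)+f_B(t)\leq \phi(t)\}$ where $\phi$ solves the scalar Volterra inequality. Once this is added, the weighted-norm argument is a clean and legitimate alternative to the paper's summability-of-increments presentation.
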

Proposition~\ref{prop:mAB} is proven in Section~\ref{sec:proof_WP_MF}.

\medskip

We now turn to the propagation of chaos result: there exists a well-chosen coupling such that the particle system \eqref{eq:model} and the nonlinear process \eqref{eq:barZ_gen} are close. Suppose that $H$ is locally square integrable:
\begin{equation}
\label{hyp:H2int}
\|H\|_{2,T} <\infty,\ T>0.
\end{equation}
Define $(\bar Z_{ t}^{ i})_{ i=1, \ldots, N}= \left((\bar Z_{ t}^{ i})_{ i=1, \ldots, N_{ A}}, (\bar Z_{ t}^{ i})_{ i=N_{ A}+1, \ldots, N}\right)$ solution to \eqref{eq:barZ_gen} driven by the same Poisson measures $ ( \pi_{ i})_{ i=1, \ldots, N}$ as for \eqref{eq:Hawkes}. The main convergence result is the following:
\begin{proposition}
\label{prop:conv}
Under Assumption \ref{ass:main_FG} and \eqref{hyp:H2int}, for all $T>0$, there exists some constant $C>0$ (depending on $T$ and the parameters of the model) such that \begin{equation}
\label{eq:conv}
\sup_{ i=1, \ldots, N} \mathbf{ E} \left[\sup_{ t\in [0, T]} \left\vert Z_{ t}^{i} - \bar Z_{ t}^{i}\right\vert\right] \leq \frac{  C}{ \sqrt{N}}.
\end{equation}

\end{proposition}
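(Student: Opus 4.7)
The plan is a classical coupling/Gronwall argument. Drive both $(Z^i)_{1 \leq i \leq N}$ and $N$ independent copies $(\bar Z^i)_{1 \leq i \leq N}$ of the nonlinear process \eqref{eq:barZ_gen} by the same Poisson measures $\pi_i$. Introducing the ``exclusive-or'' counting process
$$|Z^i - \bar Z^i|^{\#}_t := \int_0^t\!\!\int_0^\infty \bigl|\mathbf{1}_{z \le \lambda^i_s} - \mathbf{1}_{z \le \bar\lambda^i_s}\bigr|\,\pi_i(\rmd s, \rmd z),$$
the pathwise bound $\sup_{s \le t}|Z^i_s - \bar Z^i_s| \le |Z^i - \bar Z^i|^{\#}_t$ holds (since every unit jump of $|Z^i - \bar Z^i|$ corresponds to a disagreement point already counted by the right-hand side) and taking expectations gives
$$\mathbf{E}\!\left[\sup_{s \le t}|Z^i_s - \bar Z^i_s|\right] \le \int_0^t \mathbf{E}|\lambda^i_s - \bar\lambda^i_s|\,\rmd s.$$
By exchangeability, $\Delta^A(s) := \mathbf{E}|\lambda^{i,A}_s - \bar\lambda^{i,A}_s|$ (for $i \le N_A$) and $\Delta^B(s)$ (for $i > N_A$) do not depend on $i$, so it suffices to show $\Delta^A(s) + \Delta^B(s) \le C/\sqrt{N}$ uniformly on $[0,T]$.

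Next I would estimate these quantities via Assumption~\ref{ass:main_FG}. The delicate point is that the Lipschitz constant of $F$ in its second variable depends linearly on the first, cf.\ \eqref{hyp:F_Lip_y}. The trick is to apply \eqref{hyp:F_Lip_y} at the \emph{deterministic} limit $X_1'(s) := \alpha \int_0^s h_1(s-u)\,\rmd m^A_u$ rather than at the random empirical argument $X_1(s)$. Since $m^A$ is of class $\mathcal{C}^1$ on $[0,T]$ by Proposition~\ref{prop:mAB}, $X_1'$ is bounded on $[0,T]$, so one obtains $|\lambda^{i,A}_s - \bar\lambda^{i,A}_s| \le C_T\bigl(|X_1(s)-X_1'(s)| + |X_2(s)-X_2'(s)|\bigr)$, and symmetrically for $\Delta^B$ using the global Lipschitz continuity \eqref{hyp:G_Lip} of $G$. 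Each difference $X_k(s) - X_k'(s)$ is then split into a \emph{coupling error}
$$\text{(I)}_k := \tfrac{1}{N}\sum_{j} \int_0^{s^-}\! h_k(s-u)\,(\rmd Z^j_u - \rmd \bar Z^j_u)$$
and a \emph{fluctuation term}
$$\text{(II)}_k := \tfrac{1}{N}\sum_{j}\int_0^{s^-}\! h_k(s-u)\,\rmd\bar Z^j_u \;-\; \alpha_k \int_0^s h_k(s-u)\,\rmd m^{\bullet}_u,$$
for the appropriate index range, population proportion $\alpha_k \in \{\alpha,1-\alpha\}$ and limit mean $m^{\bullet} \in \{m^A,m^B\}$.

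The expectation of $|\text{(I)}_k|$ is bounded, via Fubini and the intensity of the exclusive-or process, by $\alpha_k \int_0^s |h_k(s-u)|\,\Delta^{\bullet}(u)\,\rmd u$. For $\text{(II)}_k$, the key observation is that the intensities $\bar\lambda^A, \bar\lambda^B$ of the nonlinear process are \emph{deterministic} (they depend only on the deterministic functions $m^A, m^B$), so the $\bar Z^j$'s are independent inhomogeneous Poisson processes with mean $m^{\bullet}$. A direct variance computation then yields
$$\mathbf{E}|\text{(II)}_k|^2 \le \frac{1}{N}\int_0^s h_k(s-u)^2\,\bar\lambda^{\bullet}_u\,\rmd u \le \frac{\|h_k\|_{2,T}^2\,\|\bar\lambda^{\bullet}\|_{\infty,T}}{N},$$
and by Cauchy--Schwarz $\mathbf{E}|\text{(II)}_k| \le C/\sqrt{N}$; this is exactly the point where hypothesis \eqref{hyp:H2int} is used. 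Collecting the estimates gives the convolution inequality
$$\Delta^A(s) + \Delta^B(s) \le K\int_0^s H(s-u)\bigl(\Delta^A(u)+\Delta^B(u)\bigr)\,\rmd u + \frac{K'}{\sqrt{N}},\qquad s \in [0,T],$$
with $H$ as in \eqref{def:H}. A generalized (convolution) Gronwall lemma, relying on $H \in L^1_{\text{loc}}$ from Assumption~\ref{ass:main_h}, produces $\Delta^A + \Delta^B \le C/\sqrt{N}$ uniformly on $[0,T]$; plugging this into the first display yields \eqref{eq:conv}. The main obstacle is truly the twofold nonlinearity of $F$: linearizing around the deterministic limit $X_1'$ rather than around the a priori unbounded empirical mean $X_1$ is what circumvents the random Lipschitz prefactor $c_4 X_1(s)+c_5$, and this step crucially leans on the quantitative control of $m^A$ provided by Proposition~\ref{prop:mAB}.
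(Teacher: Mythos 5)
Your proposal is correct and follows essentially the same route as the paper: couple via shared Poisson measures, dominate $\sup_{s\le t}|Z^i_s-\bar Z^i_s|$ by the exclusive-or (total-variation) process and take expectations, decompose the $F$-difference by changing one argument at a time so that \eqref{hyp:F_Lip_y} is applied with the \emph{deterministic} first coordinate $\Lambda_s^{(A,1)}$ (neutralizing the random prefactor $c_4 x + c_5$), split each empirical-minus-limit argument into a coupling error plus a centered i.i.d.\ fluctuation term controlled by Cauchy--Schwarz with rate $1/\sqrt N$ via \eqref{hyp:H2int}, and close with the convolution Gronwall lemma (Lemma~23 of \cite{MR3449317}) using the kernel $H$ of Assumption~\ref{ass:main_h}. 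The paper carries out this exact scheme in Section~\ref{sec:proof_prop_chaos}, so no further comparison is needed.
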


\begin{remark}\label{rem:maj_cst_conv} Under the Assumptions of Proposition \ref{prop:conv}, assuming additionally   (see \eqref{eq:kappas} for the definitions of $\kappa_{ i}$, $i=1, \ldots, 4$) that $\max\{(c_{3}+c_{4}\kappa_{1}\|\lambda^{A}\|_{\infty}+c_{5})(\kappa_{1}+\kappa_{2}), c_{6}(\kappa_{3}+\kappa_{4})\}<1$, we have furthermore that the constant $C$ in \eqref{eq:conv} can be chosen linear in $T$: $C= \tilde C T$, where $\tilde C>0$ does not depend on $T$. This constraint requires an explicit estimate on $\sup_{ t\geq 0} \lambda_{ t}^{ A}$. Although we will obtain in Section~\ref{sec:MFL} that $\sup_{ t\geq 0} \lambda_{ t}^{ A}<\infty$ under fairly general assumptions, it may be difficult to derive an explicit upper bound for $\sup_{ t\geq 0} \lambda_{ t}^{ A}$ in general. Nonetheless, this can easily be achieved in the model of Example \ref{ex:linear}, under non optimal conditions: since  $ \Phi_{ B\to A}\leq 1$, population $A$ is  stochastically bounded by a linear Hawkes process with memory kernel $ \alpha h_{ 1}$, that is subcritical if $ \kappa_{ 1}:= \alpha \left\Vert h_{ 1} \right\Vert_{ L^{ 1}}<1$, we obtain immediately the uniform a priori bound $ \sup_{ t\geq 0}\lambda^{ A}_{ t} \leq \frac{ \mu_{ A}}{ 1- \kappa_{ 1}}$ (see Lemma 23 of \cite{MR3449317}). In that case, the quantities $c_{j},\ 1\le j\le 6,$ are explicit and the latter condition simplifies into  $\max\{ (1+\kappa_{1}\frac{\mu_{A}}{1-\kappa_{1}})(\kappa_{1}+\kappa_{2}),\ (\kappa_{3}+\kappa_{4})\}<1$.
 \end{remark}

Proposition~\ref{prop:conv} is proven in Section~\ref{sec:proof_prop_chaos}.

\section{Long time dynamics of the mean-field limit in the inhibition model\label{sec:MFL} }

We are now interested in the behavior as $t\to\infty$ of the macroscopic intensities $ \lambda_{  t}^{A}:= \frac{ {\rm d}}{ {\rm d}t} m_{  t}^{A}$ and $  \lambda_{ t}^{B}:= \frac{ {\rm d}}{ {\rm d}t} m_{  t}^{B}$, where $(m_{A, t}, m_{  t}^{B})$ solves \eqref{eq:mtAB}. We specify here the analysis to the model described in Assumption~\ref{ass:model_intro}: for the choice of $F$ and $G$ in \eqref{eq:ex_F_G}, we see from \eqref{eq:mtAB} that $( \lambda_{ A}, \lambda_{ B})$ solves
\begin{equation}
\label{eq:lambdaAB}
\begin{cases}
\lambda_{  t}^{A}&= \Phi_{ A} \left( \alpha\int_{ 0}^{t} h_{ 1}(t-u)  \lambda_{ u}^{A}{\rm d} u\right) \Phi_{ B\to A} \left((1- \alpha) \int_{ 0}^{t} h_{ 2}(t-u)  \lambda_{ u}^{B} {\rm d}u\right),\\
 \lambda_{ t}^{B}&= \Phi_{ B} \left((1- \alpha) \int_{ 0}^{t} h_{ 3}(t-u)  \lambda_{ u}^{B}{\rm d}u\right) + \Phi_{ A\to B} \left( \alpha\int_{ 0}^{t} h_{ 4}(t-u)  \lambda_{ u}^{A} {\rm d}u\right).
\end{cases}
\end{equation}
In the specific case of Example \ref{ex:linear}, \eqref{eq:lambdaAB} becomes \begin{equation}
\label{eq:lambdaAB_lin}
\begin{cases}
\lambda_{  t}^{A}&= \left(\mu_{ A} + \alpha\int_{ 0}^{t} h_{ 1}(t-u)  \lambda_{ u}^{A}{\rm d} u\right) \Phi_{ B\to A} \left((1- \alpha) \int_{ 0}^{t} h_{ 2}(t-u)  \lambda_{ u}^{B} {\rm d}u\right),\\
 \lambda_{ t}^{B}&= \mu_{ B} + (1- \alpha) \int_{ 0}^{t} h_{ 3}(t-u)  \lambda_{ u}^{B} {\rm d}u + \Phi_{ A\to B} \left(\alpha \int_{ 0}^{t} h_{ 4}(t-u)  \lambda_{ u}^{A} {\rm d}u\right).
\end{cases}
\end{equation}
Here that we assume that $h_{ i}\geq0$ for $i=1, \ldots, 4$. By Proposition~\ref{prop:mAB}, there is a unique solution $(\lambda^{ A}, \lambda^{ B})$ in $ \mathcal{ C}([0, T], \left(\mathbb{ R}^{ +}\right)^{ 2})$ to the system \eqref{eq:lambdaAB}: existence is provided by $( \lambda_{  t}^{A},  \lambda_{ t}^{B}):= (m_{  t}^{\prime A}, m_{  t}^{\prime B})$ where $(m^{ A}, m^{ B})$ solves \eqref{eq:mtAB} and uniqueness holds since for any solution $(\lambda^{ A}, \lambda^{ B})$, the pairwise $(m_{  t}^{A}, m_{  t}^{B}):= \left( \int_{ 0}^{t}  \lambda_{ s}^{A} {\rm d}s, \int_{ 0}^{t} \lambda_{ s}^{B} {\rm d}s\right)$ is the unique solution to \eqref{eq:mtAB}. 

To simplify notations below, set 
\begin{align}
\label{eq:kappas}
\kappa_{ 1}:= \alpha \left\Vert h_{ 1} \right\Vert_{ 1},\ \kappa_{ 2}:= (1- \alpha) \left\Vert h_{ 2} \right\Vert_{ 1},\  \kappa_{ 3}:= (1- \alpha) \left\Vert h_{ 3} \right\Vert_{ 1},\  \kappa_{ 4}= \alpha \left\Vert h_{ 4} \right\Vert_{ 1}.
\end{align}
It appears that the longtime behavior of system \eqref{eq:lambdaAB} (and of its particular case \eqref{eq:lambdaAB_lin}) depends strongly on the connectivity between populations $A$ and $B$: crucial criteria are the absence/presence of inhibition from $B$ to $A$ ($ \kappa_{ 2}=0$ or $ \kappa_{ 2}>0$) and the absence/presence of retroaction from $A$ to $B$ ($ \kappa_{ 4}=0$ or $ \kappa_{ 4}>0$). The analysis of the three simpler cases where the system is not fully-connected (one among $ \kappa_{ 2}$ or $ \kappa_{ 4}$ is zero) is detailed in Section~\ref{sec:nonfully_coupled} below.  Some features are commonly observed in all three cases, but we have chosen a separate exposition for purpose of clarity. 
The more complicated case with full connectivity (that is $ \kappa_{ 2}>0$ and $ \kappa_{ 4}>0$),  reveals richer dynamical patterns and  is analyzed separately in Section~\ref{sec:fully_coupled}.

\subsection{Hypotheses}
In addition to Assumptions~\ref{ass:main_h} and~\ref{ass:model_intro}, we require the following:
\begin{assumption}
\label{ass:Phis}
Suppose that $ \Phi_{ B\to A}$ is non-increasing with $ \Phi_{ B\to A} (x) \to0$ as $x\to\infty$ and, without loss of generality, that $ \Phi_{ B\to A}(0)=1$. Similarly, suppose  that $ \Phi_{ A\to B}$ is nondecreasing with $ \Phi_{ A\to B}(0)=0$ and $ \Phi_{ A\to B} (x) \to+\infty$ as $x\to\infty$. Finally, assume that
\begin{equation}
\label{hyp:h_to_0}
h_{ i}(u)\geq 0,\ \text{ and } h_{ i}(u) \xrightarrow[ u\to \infty]{}0,\ u\geq0,\ i=1, \ldots, 4. 
\end{equation} 
\end{assumption}  

\subsection{Decoupled cases}
\label{sec:nonfully_coupled}
We analyze in this section the long-time behavior of \eqref{eq:lambdaAB} and \eqref{eq:lambdaAB_lin} in the simpler cases where $ \kappa_{ 2}=0$ or $ \kappa_{ 4}=0$.

\subsubsection{ The fully decoupled case}
\label{sec:fully_decoupled}
Suppose  that  $ \kappa_{ 2}= \kappa_{ 4}=0$, there is no inhibition and no retroaction. The dynamics of \eqref{eq:lambdaAB} is somehow trivial since populations $A$ and $B$ behave independently from one another. Nonetheless consider this case as a control model for the other cases. 
\begin{proposition}
\label{prop:k2_0_k4_0}
Suppose  that Assumptions~\ref{ass:main_h},~\ref{ass:model_intro} and~\ref{ass:Phis} are satisfied with $ \kappa_{ 2}= \kappa_{ 4}=0$. If $ \left\Vert \Phi_{ A} \right\Vert_{ L} \kappa_{ 1}<1$ (resp. $ \left\Vert \Phi_{ B} \right\Vert_{ L} \kappa_{ 3}<1$), then population $A$ (resp. $B$) is subcritical: $\sup_{ t\geq0} \lambda_{  t}^{A}<\infty$ (resp. $\sup_{ t\geq0}  \lambda_{ t}^{B}<\infty$). Moreover, in the linear case \eqref{eq:lambdaAB_lin}, 
\begin{enumerate}
\item if $ \kappa_{ 1}<1$ (resp.  $ \kappa_{ 3}<1$), then population $A$ (resp. $B$) is subcritical and $ \lambda_{  t}^{A} \xrightarrow[ t\to\infty]{} \frac{ \mu_{ A}}{ 1- \kappa_{ 1}}$ (resp. $  \lambda_{ t}^{B} \xrightarrow[ t\to\infty]{} \frac{ \mu_{ B}}{ 1- \kappa_{ 3}}$),
\item if $ \kappa_{ 1}>1$ and $ \mu_{ A}>0$ (resp.  $ \kappa_{ 3}>1$ and $ \mu_{ B}>0$), then population $A$ (resp. $B$) is supercritical and $ \lambda_{  t}^{A} \xrightarrow[ t\to\infty]{} +\infty$ (resp. $  \lambda_{ t}^{B} \xrightarrow[ t\to\infty]{} +\infty$).
\end{enumerate}

\end{proposition}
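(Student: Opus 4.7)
The plan is to exploit the fact that $\kappa_{2}=\kappa_{4}=0$ fully decouples the two populations and to reduce each to a one-population Hawkes-type equation; by symmetry I only argue for population $A$. Under $\alpha\in(0,1)$, $\kappa_{2}=0$ forces $h_{2}\equiv 0$ a.e., so the second argument of $\Phi_{B\to A}$ in \eqref{eq:lambdaAB} vanishes identically and $\Phi_{B\to A}=1$ along the solution by Assumption~\ref{ass:Phis}; similarly $\kappa_{4}=0$ yields $h_{4}\equiv 0$ and $\Phi_{A\to B}(0)=0$. The system then reduces to $\lambda_{t}^{A}=\Phi_{A}\bigl(\alpha\int_{0}^{t}h_{1}(t-u)\lambda_{u}^{A}\,du\bigr)$ and an analogous equation for $\lambda^{B}$.

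For the general subcritical statement, Assumption~\ref{ass:model_intro} gives $\Phi_{A}(x)\leq \Phi_{A}(0)+\|\Phi_{A}\|_{L}\,x$; combined with $h_{1}\geq 0$ this yields $\lambda_{s}^{A}\leq \Phi_{A}(0)+\|\Phi_{A}\|_{L}\kappa_{1}\sup_{u\leq s}\lambda_{u}^{A}$. Proposition~\ref{prop:mAB} guarantees $\sup_{s\leq T}\lambda_{s}^{A}<\infty$ for every finite $T$, so taking the supremum over $s\leq t$ and using $\|\Phi_{A}\|_{L}\kappa_{1}<1$ immediately yields the uniform bound $\sup_{t\geq 0}\lambda_{t}^{A}\leq \Phi_{A}(0)/(1-\|\Phi_{A}\|_{L}\kappa_{1})$.

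In the linear case, the reduced equation becomes the classical Volterra convolution equation $\lambda_{t}^{A}=\mu_{A}+\alpha(h_{1}\ast \lambda^{A})(t)$ for the mean intensity of a linear Hawkes process. When $\kappa_{1}<1$ one may directly invoke Theorems~10 and~11 of \cite{MR3449317}, or equivalently solve via the resolvent kernel $\psi$ of $\alpha h_{1}$ (which belongs to $L^{1}$ with $\|\psi\|_{1}=\kappa_{1}/(1-\kappa_{1})$) to get $\lambda_{t}^{A}=\mu_{A}\bigl(1+\int_{0}^{t}\psi\bigr)$; convergence to $\ell:=\mu_{A}/(1-\kappa_{1})$ then follows by writing $r_{t}:=\lambda_{t}^{A}-\ell$ as the unique solution of the perturbed renewal identity $r_{t}=\alpha(h_{1}\ast r)(t)+\varepsilon(t)$ with $\varepsilon(t)=-\alpha\ell\int_{t}^{\infty}h_{1}(u)\,du\to 0$, and controlling $r=\varepsilon+\psi\ast\varepsilon$ by splitting the convolution over $[0,t/2]$ and $[t/2,t]$. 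When $\kappa_{1}>1$ and $\mu_{A}>0$, I would iterate a lower bound: setting $c_{n}:=\mu_{A}\sum_{k=0}^{n}\kappa_{1}^{k}$, observe $\lambda_{t}^{A}\geq \mu_{A}=c_{0}$ since $h_{1},\lambda^{A}\geq 0$, and prove by induction that $\liminf_{t\to\infty}\lambda_{t}^{A}\geq c_{n}$; indeed if this holds at rank $n$, then for any $\eta>0$ there is $T_{n}(\eta)$ with $\lambda_{t}^{A}\geq c_{n}-\eta$ for $t\geq T_{n}(\eta)$, whence for $t>T_{n}(\eta)$ we have $\lambda_{t}^{A}\geq \mu_{A}+\alpha(c_{n}-\eta)\int_{0}^{t-T_{n}(\eta)}h_{1}(v)\,dv$, whose $\liminf$ as $t\to\infty$ equals $c_{n+1}-\kappa_{1}\eta$; letting $\eta\downarrow 0$ closes the induction, and since $\kappa_{1}>1$ forces $c_{n}\to\infty$ we conclude $\lambda_{t}^{A}\to\infty$.

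The routine Lipschitz and monotone-convergence estimates are straightforward; the main technical point is the longtime convergence in the linear subcritical case, where the perturbed renewal structure prevents a direct Grönwall conclusion, but this step is classical in the Hawkes literature and may be imported directly from \cite{MR3449317}.
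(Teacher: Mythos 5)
Your proof is correct. The paper's own proof is a one-liner that cites Delattre--Fournier--Hoffmann, Theorems~10 and~11, noting that once $\kappa_2=\kappa_4=0$ the two populations decouple into independent one-population Hawkes processes; you make the same reduction, and for the linear subcritical case you likewise fall back on that reference (or the equivalent resolvent-kernel computation $\lambda_t^A=\mu_A(1+\int_0^t\psi)$ with $\|\psi\|_1=\kappa_1/(1-\kappa_1)$). Where you go further than the paper is in supplying self-contained elementary arguments for the remaining two claims rather than deferring to the literature: the fixed-point bound $M_t(1-\|\Phi_A\|_L\kappa_1)\le\Phi_A(0)$ for the nonlinear subcritical statement (which correctly uses the local finiteness from Proposition~\ref{prop:mAB} before dividing by $1-\|\Phi_A\|_L\kappa_1$), and the inductive lower bound $\liminf_{t\to\infty}\lambda_t^A\ge\mu_A\sum_{k=0}^n\kappa_1^k$ for the supercritical case. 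That induction is a clean replacement for the monotone-class argument underlying Theorem~11 of the cited reference, and it buys a genuinely self-contained proof of divergence without resolvent machinery; the paper instead suggests the same conclusions can be recovered from its later a priori estimates \eqref{eq:ulB_ulA_gen}--\eqref{eq:ulA_blB} in Section~\ref{sec:proofs_longtime}, which is a third, also valid, route. One minor cosmetic point: your decoupling step observes that $\kappa_2=0$ forces $h_2\equiv 0$ a.e.\ (using $\alpha<1$ and $h_2\ge 0$ from Assumption~\ref{ass:Phis}), which is fine, though it is enough to note directly that the argument of $\Phi_{B\to A}$ vanishes since $\kappa_2=0$ bounds it above by $\kappa_2\sup_u\lambda_u^B=0$.
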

\begin{proof}[Proof of Proposition~\ref{prop:k2_0_k4_0}]
 This case is an immediate consequence of \cite{MR3449317}, Th.~10 and 11. Note also that this result can be retrieved easily from the estimates given in Section~\ref{sec:proofs_longtime} below.
 \end{proof}

\subsubsection{The case with inhibition and no retroaction}
\label{sec:k2_pos_k4_0}
Suppose  that  $ \kappa_{ 2}>0$ and $\kappa_{ 4}=0$: there is   inhibition from $B$ to $A$ but no retroaction from $A$ to $B$. In this case, population $B$ behaves independently from $A$. 
\begin{proposition}
\label{prop:k2_pos_k4_0}
Suppose  that Assumptions~\ref{ass:main_h},~\ref{ass:model_intro} and~\ref{ass:Phis} are satisfied with $ \kappa_{ 2}>0$ and $\kappa_{ 4}=0$.
If $ \left\Vert \Phi_{ B} \right\Vert_{ L} \kappa_{ 3}<1$, then population $B$ is subcritical: $\sup_{ t\geq0}  \lambda_{ t}^{B}<\infty$. Moreover, in the particular case of \eqref{eq:lambdaAB_lin}, 
\begin{enumerate}
\item If $ \kappa_{ 3}<1$, then population $B$ is subcritical: $  \lambda_{ t}^{B} \xrightarrow[ t\to\infty]{} \frac{ \mu_{ B}}{ 1- \kappa_{ 3}}$. Moreover,
\begin{enumerate}
\item if $ \kappa_{ 1} \Phi_{ B\to A} \left( \frac{ \kappa_{ 2} \mu_{ B}}{ 1- \kappa_{ 3}}\right)<1$, then population $A$ is subcritical and
$
\lambda_{  t}^{A} \xrightarrow[ t\to\infty]{} \frac{ \mu_{ A}\Phi_{ B\to A} \left( \frac{ \kappa_{ 2} \mu_{ B}}{ 1- \kappa_{ 3}}\right)}{1- \kappa_{ 1}\Phi_{ B\to A} \left( \frac{ \kappa_{ 2} \mu_{ B}}{ 1- \kappa_{ 3}}\right)},
$
\item if $ \kappa_{ 1} \Phi_{ B\to A} \left( \frac{ \kappa_{ 2} \mu_{ B}}{ 1- \kappa_{ 3}}\right)>1$ and $ \mu_{ A}>0$, then population $A$ is supercritical: $ \lambda_{  t}^{A} \xrightarrow[ t\to\infty]{}+\infty$.
\end{enumerate}
\item If $ \kappa_{ 3}>1$ and $ \mu_{ B}>0$: population $B$ is supercritical and $  \lambda_{ t}^{B} \xrightarrow[ t\to\infty]{}+\infty$. Regardless of the values of $ \mu_{ A}, \kappa_{ 1}, \kappa_{ 2}>0$, we have that $ \lambda_{  t}^{A} \xrightarrow[ t\to \infty]{}0$.
\end{enumerate}
\end{proposition}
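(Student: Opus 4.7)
The proof decouples the dynamics: since $\kappa_4=\alpha\|h_4\|_1=0$ and $h_4\ge 0$, we have $h_4\equiv 0$; combined with $\Phi_{A\to B}(0)=0$ from Assumption~\ref{ass:Phis}, the second equation in \eqref{eq:lambdaAB} collapses to $\lambda_t^B=\Phi_B\bigl((1-\alpha)\int_0^t h_3(t-u)\lambda_u^B\,du\bigr)$, a standard nonlinear Hawkes equation independent of~$A$. Under $\|\Phi_B\|_L\kappa_3<1$, bounding $\Phi_B(x)\le\Phi_B(0)+\|\Phi_B\|_L x$, using $h_3\ge 0$, and taking $\sup_{s\le t}$ yields $\sup_t\lambda_t^B\le\Phi_B(0)/(1-\|\Phi_B\|_L\kappa_3)$, i.e.\ the first claim. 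In the linear setting \eqref{eq:lambdaAB_lin}, the convergences $\lambda_t^B\to\mu_B/(1-\kappa_3)$ (when $\kappa_3<1$) and $\lambda_t^B\to+\infty$ (when $\kappa_3>1$, $\mu_B>0$) follow directly from Theorems~10 and~11 of \cite{MR3449317} applied to the linear renewal equation for $\lambda^B$.

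For case~1 ($\kappa_3<1$), set $\ell_B:=\mu_B/(1-\kappa_3)$ and $\varphi(t):=\Phi_{B\to A}\bigl((1-\alpha)\int_0^t h_2(t-u)\lambda_u^B\,du\bigr)$. Substituting $v=t-u$, dominated convergence on $\int_0^t h_2(v)\lambda_{t-v}^B\,dv$ (valid since $h_2\in L^1$, $\lambda^B$ is bounded with limit $\ell_B$) gives $\varphi(t)\to c:=\Phi_{B\to A}(\kappa_2\ell_B)$. The equation for $\lambda^A$ then becomes $\lambda_t^A=\varphi(t)\bigl(\mu_A+\alpha\int_0^t h_1(t-u)\lambda_u^A\,du\bigr)$, a Volterra equation whose effective kernel $\varphi(u)\alpha h_1(\cdot)$ has asymptotic $L^1$-weight $c\kappa_1$. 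In subcase~(1a) with $c\kappa_1<1$: picking $\epsilon$ with $(c+\epsilon)\kappa_1<1$ and $T_0$ beyond which $|\varphi-c|<\epsilon$, a perturbed-Volterra argument shows $\lambda^A$ is bounded and $\lambda_t^A\to\ell_A^*=c\mu_A/(1-c\kappa_1)$, the unique fixed point of the limit equation $\ell=c(\mu_A+\kappa_1\ell)$. In subcase~(1b) with $c\kappa_1>1$ and $\mu_A>0$: fix $\epsilon$ with $(c-\epsilon)\kappa_1>1$; for $t\ge T_0$, $\lambda_t^A\ge(c-\epsilon)\bigl(\mu_A+\alpha\int_{T_0}^t h_1(t-u)\lambda_u^A\,du\bigr)$, and minoration by a supercritical linear Hawkes intensity (Theorem~11 of \cite{MR3449317}) forces $\lambda_t^A\to+\infty$.

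Case~2 ($\kappa_3>1$, $\mu_B>0$) is the delicate part. Here $\lambda_t^B\to\infty$, so since $h_2\ge 0$ and $\kappa_2>0$, a direct lower bound ($\lambda_u^B>M$ for $u\ge T_M$ yields $\int_0^t h_2(t-u)\lambda_u^B\,du\ge M\int_0^{t-T_M}h_2(s)\,ds\to M\|h_2\|_1$) forces this argument to $+\infty$, hence $\varphi(t)\to 0$ by Assumption~\ref{ass:Phis}. Define $\mu(t):=\mu_A+\alpha\int_0^t h_1(t-u)\lambda_u^A\,du$ so that $\lambda_t^A=\varphi(t)\mu(t)$; then $\mu$ satisfies the self-consistent Volterra equation $\mu(t)=\mu_A+\alpha\int_0^t h_1(t-u)\varphi(u)\mu(u)\,du$. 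The key step is to show $\mu$ is bounded despite the absence of any a priori subcriticality on $\lambda^A$: fixing $\epsilon<1/\kappa_1$ (the case $\kappa_1=0$ is trivial) and $T_0$ with $\varphi(u)<\epsilon$ for $u\ge T_0$, the continuity of $\mu$ on $[0,T_0]$ combined with $h_1(v)\to 0$ as $v\to\infty$ bounds the contribution from $[0,T_0]$ by some constant $B$, leaving a subcritical Volterra inequality $\mu(t)\le B+\epsilon\alpha\int_{T_0}^t h_1(t-u)\mu(u)\,du$ with $\epsilon\kappa_1<1$; comparison with the associated linear renewal equation gives $\sup_t\mu(t)<\infty$, and hence $\lambda_t^A=\varphi(t)\mu(t)\to 0$.

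The main obstacle is precisely case~2: unlike case~1 one cannot simply transfer the behavior of $\lambda^B$ to $\lambda^A$ by a continuity argument, because a priori $\lambda^A$ might blow up when $\kappa_1>1$. The resolution exploits the \emph{multiplicative} structure of the inhibition: $\varphi$ eventually becomes small enough that the effective convolution kernel $\varphi(u)\alpha h_1(\cdot)$ is asymptotically subcritical on $[T_0,\infty)$, and this asymptotic subcriticality propagates through a Volterra comparison to yield boundedness of the would-be input~$\mu$, after which the factor $\varphi\to 0$ delivers the conclusion.
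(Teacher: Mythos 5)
Your proposal is correct, and the overall strategy coincides with the paper's: decouple population $B$ (since $\kappa_4=0$ and $\Phi_{A\to B}(0)=0$), obtain its long-time limit from the linear Hawkes theory of \cite{MR3449317}, observe that the inhibition factor $\varphi(t)=\Phi_{B\to A}\bigl((1-\alpha)\int_0^t h_2(t-u)\lambda^B_u\,du\bigr)$ converges (to $c=\Phi_{B\to A}(\kappa_2\ell_B)$ in case~1, to $0$ in case~2), and then control $\lambda^A$ via the asymptotic sub/supercriticality of the effective kernel $\varphi(\cdot)\,\alpha h_1(\cdot)$. The difference is mainly one of packaging: the paper routes the $\lambda^A$-analysis through the a priori estimates \eqref{eq:ulA_blB}, \eqref{eq:bound_ellA_1}, \eqref{eq:bound_ellA_2} of Propositions~\ref{prop:ineq_ells_1}--\ref{prop:bounds_ells}, after observing that the generic condition \eqref{hyp:A_sub} is satisfied in cases~1(a) and~2 and that \eqref{eq:ulA_blB} alone forces $\underline\ell_A=+\infty$ in case~1(b); you instead inline the same Volterra-comparison mechanics directly (split at a time $T_0$ beyond which $\varphi$ is within $\epsilon$ of its limit, control the $[0,T_0]$ contribution via $h_1\in L^1$, compare the tail with a linear sub/supercritical renewal equation). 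The substance is identical --- indeed the inner proof of Proposition~\ref{prop:bounds_ells} performs exactly the $u_A(t,t')$-plus-$v_B(t/2)$ manipulations you carry out by hand. One point worth flagging: in subcase~1(a) you state the convergence $\lambda^A_t\to\ell_A^*$ only by referring to ``a perturbed-Volterra argument''; the two-sided estimates $\bar\ell_A\le\Psi_1(\underline\ell_B)$ and $\underline\ell_A\ge\Psi_1(\bar\ell_B)$ (which pinch $\bar\ell_A=\underline\ell_A$ once $\underline\ell_B=\bar\ell_B$) are precisely what the paper uses to turn boundedness into convergence, so you should make that step explicit if you want the argument self-contained. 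Your direct $\sup$-bound proving the first, non-linear claim ($\sup_t\lambda^B_t\le\Phi_B(0)/(1-\|\Phi_B\|_L\kappa_3)$) is also fine and in fact more explicit than the paper, which leaves that claim implicit.
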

Proposition~\ref{prop:k2_pos_k4_0} reveals a first effect of inhibition: when $ \kappa_{ 2}=0$, (no inhibition, Proposition~\ref{prop:k2_0_k4_0}) the critical parameter for $ \kappa_{ 1}$ is $ \kappa_{ 1, c}^{ \text{ isolated}}:=1$. With a subcritical inhibition ($ \kappa_{ 3}<1$), the critical parameter for $ \kappa_{ 1}$ becomes (since $ \Phi_{ B\to A}\leq1$) $ \kappa_{ 1, c}^{ \text{ inhib}}:= { 1}/{ \Phi_{ B\to A} \left( \frac{ \kappa_{ 2} \mu_{ B}}{ 1- \kappa_{ 3}}\right)} \geq \kappa_{ 1, c}^{ \text{ isolated}}$, the inequality being strict when $ \Phi_{ B\to A}$ strictly decreases. Note that $ \kappa_{ 1, c}^{ \text{ inhib}}$ increases as either the intrinsic activity of population $B$ increases ($ \mu_{ B}$ increases), or population $B$ gets closer to criticality ($\kappa_{3}\nearrow 1$), or in presence of a longer memory effect of population $B$ onto population $A$ ($\kappa_{2}$ increases).
\begin{remark}
\label{rem:effect_inhib}
This allows in particular for regimes where $ \kappa_{ 1, c}^{ \text{ inhib}} > \kappa_{ 1} > \kappa_{ 1, c}^{ \text{ isolated}}=1$: in absence of inhibition, population $A$ may be supercritical, but inhibition brings back population $A$ into subcriticality. This situation is all the more spectacular when population $B$ is supercritical ($ \kappa_{ 3}>1$): supercritical inhibition simply kills the dynamics of $A$. This result reflects biological observations: \textit{epilepsy} is a chronic condition characterized by recurrent seizures. Episodes of \textit{seizures}, involving abnormal synchronous firing of large groups of neurons \cite{luo:15}, constitute an actual example of population $A$ becoming supercritical when recurrent inhibition from population $B$ is altered or suppressed. \end{remark}

Proof of Proposition~\ref{prop:k2_pos_k4_0} is given in Section~\ref{sec:proof_k2_pos_k4_0}.
 \subsubsection{The case with retroaction and no inhibition}
 \label{sec:k2_0_k4_pos}
Suppose  $ \kappa_{ 2}=0$ and $\kappa_{ 4}>0$: there is no inhibition from $B$ to $A$ but presence of a retroaction from $A$ to $B$. In this case, population $A$ behaves independently from $B$ and has a sole excitatory influence on $B$.  
 \begin{proposition}
\label{prop:k2_0_k4_pos}
Suppose  that Assumptions~\ref{ass:main_h},~\ref{ass:model_intro} and~\ref{ass:Phis} are satisfied with $ \kappa_{ 2}=0$ and $\kappa_{ 4}>0$. If $ \left\Vert \Phi_{ A} \right\Vert_{ L} \kappa_{ 1}<1$, then population $A$ is subcritical: $\sup_{ t\geq0} \lambda_{  t}^{A}<\infty$. Moreover, in the particular case of \eqref{eq:lambdaAB_lin}, 
\begin{enumerate}
\item If $ \kappa_{ 1}<1$, then population $A$ is subcritical: $ \lambda_{  t}^{A} \xrightarrow[ t\to\infty]{} \frac{ \mu_{ A}}{ 1- \kappa_{ 1}}$. Moreover,
\begin{enumerate}
\item if $ \kappa_{ 3} <1$, then population $B$ is subcritical and $ \lambda_{ t}^{B} \xrightarrow[ t\to\infty]{} \frac{ \mu_{ B}}{ 1- \kappa_{ 3}} +\frac{1}{1- \kappa_{ 3}} \Phi_{ A\to B} \left(\frac{ \kappa_{ 4} \mu_{ A}}{1- \kappa_{ 1}}\right)$,
\item if $ \kappa_{ 3}>1$ and $ \mu_{ B}>0$, then population $B$ is supercritical: $  \lambda_{ t}^{B} \xrightarrow[ t\to\infty]{}+\infty$.
\end{enumerate}
\item If $ \kappa_{ 1}>1$ and $ \mu_{ A}>0$: both populations $A$ and $B$ are supercritical, $ \lambda_{  t}^{A} \xrightarrow[ t\to\infty]{}+\infty$ and $  \lambda_{ t}^{B} \xrightarrow[ t\to\infty]{}+\infty$.
\end{enumerate}
\end{proposition}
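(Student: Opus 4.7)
The plan is to first observe that $\kappa_2 = 0$ together with $h_2 \geq 0$ (Assumption~\ref{ass:Phis}) and $\alpha \in (0,1)$ forces $h_2 \equiv 0$ a.e., so that the argument of $\Phi_{B\to A}$ in the first line of~\eqref{eq:lambdaAB} is identically zero and $\Phi_{B\to A}(0) = 1$. The equation for $\lambda^A$ thus decouples entirely:
\begin{equation*}
\lambda^A_t \;=\; \Phi_A\!\left(\alpha \int_0^t h_1(t-u)\,\lambda^A_u\, du\right),
\end{equation*}
and every statement concerning $\lambda^A$ (subcriticality when $\|\Phi_A\|_L \kappa_1 < 1$; the linear-case limit $\mu_A/(1-\kappa_1)$ when $\kappa_1 < 1$; divergence to $+\infty$ when $\kappa_1 > 1$, $\mu_A > 0$) is then exactly the one-population analysis, i.e.\ Proposition~\ref{prop:k2_0_k4_0} applied to this single equation, or equivalently Theorems~10--11 of~\cite{MR3449317}.

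The substantive work concerns $\lambda^B$ in the linear setting~\eqref{eq:lambdaAB_lin}. Setting $g(t) := \alpha \int_0^t h_4(t-u)\,\lambda^A_u\, du$ and $f(t) := \Phi_{A\to B}(g(t))$, the $\lambda^B$-equation reads
\begin{equation*}
\lambda^B_t \;=\; \mu_B + f(t) + (1-\alpha) \int_0^t h_3(t-u)\,\lambda^B_u\, du,
\end{equation*}
which is a linear Volterra equation with time-dependent forcing $\mu_B + f(t)$. In case (1), $\lambda^A$ is bounded and converges to $\mu_A/(1-\kappa_1)$; a dominated-convergence argument applied to $g(t) = \alpha \int_0^t h_4(s)\,\lambda^A_{t-s}\, ds$ (with dominating function $\|\lambda^A\|_\infty\, h_4(s)$ and pointwise limit $h_4(s)\mu_A/(1-\kappa_1)$) gives $g(t) \to \kappa_4 \mu_A/(1-\kappa_1)$, and continuity of $\Phi_{A\to B}$ yields $f(t) \to f_\infty := \Phi_{A\to B}\!\left(\kappa_4 \mu_A/(1-\kappa_1)\right)$.

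For case (1a), I would solve the Volterra equation via its resolvent kernel $R$ associated to $k := (1-\alpha) h_3$ (so $R = k + k*R$ with $\|R\|_1 = \kappa_3/(1-\kappa_3)$ when $\kappa_3 < 1$), obtaining $\lambda^B_t = (\mu_B + f(t)) + \int_0^t R(t-s)(\mu_B + f(s))\,ds$; a second convolution passage to the limit with asymptotically constant integrand $\mu_B + f(s) \to \mu_B + f_\infty$ yields the claimed formula $(\mu_B + f_\infty)/(1-\kappa_3)$. For case (1b), $\kappa_3 > 1$ with $\mu_B > 0$ makes even the minorant equation $\lambda^B_t \geq \mu_B + (1-\alpha) \int_0^t h_3(t-u)\,\lambda^B_u\,du$ blow up, by the standard supercritical linear Hawkes analysis (Theorem~11 of~\cite{MR3449317}). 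For case (2), $\lambda^A_t \to +\infty$ combined with monotonicity of $\Phi_{A\to B}$ and $\Phi_{A\to B}(\infty) = +\infty$ gives $f(t) \to +\infty$; this divergent forcing, acting through the convolution with a nonnegative kernel, forces $\lambda^B_t \to +\infty$ regardless of the criticality of $h_3$.

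The main technical hurdle is the renewal-type passage to the limit in $\int_0^t R(t-s)(\mu_B + f(s))\,ds$ when the forcing converges only asymptotically: this is handled by the classical splitting into $[0,t-T] \cup [t-T,t]$, exploiting the tail integrability of $R$ on the first piece and the closeness of $f(s)$ to $f_\infty$ on the second. This same renewal machinery and dominated-convergence step for $g(t)$ will already be deployed for the companion Proposition~\ref{prop:k2_pos_k4_0}, so once the toolbox is set up in Section~\ref{sec:proofs_longtime}, the present case is arguably the easiest of the three decoupled ones, since the $A$-equation is self-contained and the $B$-equation feels $A$ only through a scalar forcing.
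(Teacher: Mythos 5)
Your proof is correct, but it takes a genuinely different route from the paper's. You set up the explicit Volterra resolvent for the $\lambda^B$-equation and push the limit through a renewal-type splitting $[0,t-T]\cup[t-T,t]$, after first checking by dominated convergence that the forcing $f(t)=\Phi_{A\to B}\bigl(\alpha\int_0^t h_4(t-u)\lambda^A_u\,\rmd u\bigr)$ converges. The paper instead never solves the Volterra equation: it relies on the a priori $\liminf$/$\limsup$ inequalities of Proposition~\ref{prop:ineq_ells_1}, namely \eqref{eq:bound_ellB_1} giving $\bar\ell_B(1-\kappa_3)\le\mu_B+\Phi_{A\to B}(\kappa_4\bar\ell_A)$ and \eqref{eq:ulB_ulA} giving the matching lower bound on $\underline\ell_B$, which squeeze $\underline\ell_B=\bar\ell_B$ once $\bar\ell_A=\underline\ell_A=\mu_A/(1-\kappa_1)$; the divergent cases (1b) and (2) are handled in one line each by reading off that \eqref{eq:ulB_ulA} or \eqref{eq:ulB_ulA_gen} force $\underline\ell_B=+\infty$. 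What the paper's route buys is uniformity: the same small toolbox (Remark~\ref{rem:ineq} plus Proposition~\ref{prop:ineq_ells_1}) also drives Propositions~\ref{prop:k2_0_k4_0} and~\ref{prop:k2_pos_k4_0} and, crucially, Theorem~\ref{th:full_conv}, where no resolvent representation is available because $\lambda^A$ and $\lambda^B$ are genuinely coupled. What your route buys is self-containedness: the resolvent formula $\lambda^B_t=(\mu_B+f(t))+\int_0^t R(t-s)(\mu_B+f(s))\,\rmd s$ gives boundedness of $\lambda^B$ explicitly along the way, whereas the paper's squeeze via \eqref{eq:bound_ellB_1} is only conclusive once one separately knows $\bar\ell_B<\infty$, a point the paper handles (for the fully coupled case) by the $u_B(t,t')$ argument in Section~\ref{sec:th_full_conv} but leaves implicit in the terse Section~\ref{sec:proof_k2_0_k4_pos}. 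Both proofs are sound; yours is closer to the classical linear Hawkes treatment, the paper's is tailored to extend beyond the decoupled setting.
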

As anticipated, whenever $A$ is supercritical, so is population $B$ whatever its own intrinsic behavior. Proposition~\ref{prop:k2_0_k4_pos} is proven in Section~\ref{sec:proof_k2_0_k4_pos}.

\subsection{The fully-coupled case}
\label{sec:fully_coupled}
Consider now the more interesting case with full connectivity, i.e. $\kappa_{2}>0$ and $\kappa_{4}>0$: both inhibition from $B$ to $A$ and retroaction from $A$ to $B$ are present. We first propose a general result for Model  \eqref{eq:lambdaAB} in Theorem \ref{th:A_subcritical} and then specialize to the linear Model \eqref{eq:lambdaAB_lin}.

\subsubsection{A general result of subcriticality}
\label{sec:gen_res_sub}
\begin{theorem}
\label{th:A_subcritical}
Consider Model \eqref{eq:lambdaAB} satisfying Assumptions~\ref{ass:main_h},~\ref{ass:model_intro} and~\ref{ass:Phis} with $\kappa_{2} \kappa_{ 4}>0$. Then, the following holds true:
\begin{enumerate}
\item Whatever the values of $ \kappa_{ i}$, $i=1, \ldots, 4$, $ \lambda_{  t}^{A}$ does not tend to $+\infty$ as $t\to \infty$.
 \item If $  \lambda_{ t}^{B} \xrightarrow[ t\to \infty]{}+\infty$, it holds that
$\lambda_{  t}^{A} \xrightarrow[ t\to \infty]{} 0.$
\end{enumerate} 
\end{theorem}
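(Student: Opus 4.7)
Both claims rest on a single Lipschitz--monotonicity estimate for $\lambda^A_t$ combined with an elementary self-referential bootstrap. From $\Phi_A(x) \leq \Phi_A(0) + \|\Phi_A\|_L x$ and the definition $M(t) := \sup_{s \in [0,t]} \lambda_s^A$ (finite for every $t$ by Proposition~\ref{prop:mAB}), one has $\alpha\int_0^t h_1(t-u)\lambda_u^A\,\rmd u \leq \kappa_1 M(t)$. Setting $a(t) := \Phi_{B\to A}\bigl((1-\alpha)\int_0^t h_2(t-u) \lambda_u^B \,\rmd u\bigr)$, the first equation of~\eqref{eq:lambdaAB} yields the master inequality
\begin{equation*}
\lambda_t^A \leq \bigl(\Phi_A(0) + \|\Phi_A\|_L \kappa_1 M(t)\bigr)\, a(t),
\end{equation*}
and $a$ takes values in $[0,1]$ by Assumption~\ref{ass:model_intro}.

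\textbf{A preliminary divergence lemma.} I will establish that for any nonnegative $h \in L^1$ with $\|h\|_1 > 0$ and any locally bounded $f \geq 0$ with $f(u)\to\infty$ as $u\to\infty$, the convolution $\int_0^t h(t-u)f(u)\,\rmd u$ tends to $+\infty$: fixing $M>0$ and $T_M$ with $f \geq M$ on $[T_M,\infty)$, one bounds the integral from below by $M\int_0^{t-T_M} h(s)\,\rmd s$, which tends to $M\|h\|_1$, and $M$ is arbitrary. Combined with the monotonicity of $\Phi_{B\to A}$ and $\Phi_{B\to A}(\infty)=0$, this lemma implies that $\lambda_t^B \to \infty$ forces $a(t)\to 0$. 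Similarly, since $\kappa_4>0$, $h_4\geq 0$ and $\Phi_{A\to B}(\infty)=+\infty$, it gives $\lambda_t^A\to\infty \Rightarrow \lambda_t^B \to \infty$ through the second equation of~\eqref{eq:lambdaAB}.

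\textbf{Proof of~(1).} Argue by contradiction: assume $\lambda_t^A \to +\infty$. The propagation above gives $\lambda_t^B\to\infty$ and hence $a(t)\to 0$. Pick $s$ large enough that $\epsilon_s:=\sup_{u\geq s} a(u)$ satisfies $\|\Phi_A\|_L\kappa_1\epsilon_s < 1$. For $u\in[s,t]$, $M(u)\leq M(t)$ by monotonicity and $a(u)\leq\epsilon_s$, so the master inequality yields $\sup_{u\in[s,t]}\lambda_u^A \leq \bigl(\Phi_A(0)+\|\Phi_A\|_L\kappa_1 M(t)\bigr)\epsilon_s$, whence
\begin{equation*}
M(t) \leq \max\!\Bigl(M(s),\, \bigl(\Phi_A(0) + \|\Phi_A\|_L \kappa_1 M(t)\bigr)\epsilon_s\Bigr).
\end{equation*}
Solving this inequality for $M(t)$ yields the uniform upper bound $M(t)\leq \max\!\bigl(M(s),\, \Phi_A(0)\epsilon_s/(1-\|\Phi_A\|_L\kappa_1\epsilon_s)\bigr)$, contradicting $\lambda_t^A\to\infty$.

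\textbf{Proof of~(2) and main difficulty.} The hypothesis $\lambda_t^B\to\infty$ directly gives $a(t)\to 0$, and the bootstrap of~(1) applies verbatim to show $M_\infty := \sup_{t\geq 0} M(t) < \infty$. Passing to the $\limsup$ in the master inequality then gives $\limsup_{t\to\infty}\lambda_t^A \leq \bigl(\Phi_A(0) + \|\Phi_A\|_L \kappa_1 M_\infty\bigr)\cdot 0 = 0$, i.e.\ $\lambda_t^A\to 0$. The genuinely delicate point is the self-referential bootstrap for $M(t)$: the Lipschitz bound on $\Phi_A$ produces a term proportional to $M(t)$ itself on the right-hand side through the memory kernel $h_1$, and one can absorb it only because $a(t)\to 0$ eventually brings the coefficient $\|\Phi_A\|_L\kappa_1\epsilon_s$ strictly below $1$.
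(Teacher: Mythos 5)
Your proof is correct, and it takes a genuinely cleaner route than the paper while preserving the same two core ideas: (i) $\lambda^A_t\to\infty$ forces $\lambda^B_t\to\infty$ through the retroaction $\Phi_{A\to B}$, hence the inhibition multiplier $a(t)=\Phi_{B\to A}(\cdot)$ tends to $0$; and (ii) once $a(t)$ falls below $(\|\Phi_A\|_L\kappa_1)^{-1}$, the self-excitation term can be absorbed. The paper reaches the same conclusion indirectly: it first proves general $\liminf/\limsup$ inequalities (Proposition~\ref{prop:ineq_ells_1}, via Remark~\ref{rem:ineq}), then establishes $u_A(t):=\sup_{s\geq t}\lambda^A_s<\infty$ by a bootstrap on the two-parameter quantity $u_A(t,t')=\sup_{s\in[t,t']}\lambda^A_s$ (Proposition~\ref{prop:bounds_ells}, eq.~\eqref{eq:uniform_control_uA}), and finally deduces Theorem~\ref{th:A_subcritical} as a short corollary. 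You instead run the bootstrap directly on the \emph{past} supremum $M(t)=\sup_{[0,t]}\lambda^A_s$, which is automatically finite, avoiding both the two-parameter truncation $u_A(t,t')$ needed to make sense of the a priori possibly infinite future supremum and the dominated-convergence argument on the ``tail'' $\int_0^t h(s-u)\lambda_u\,\rmd u$ (so you do not even use the hypothesis $h_i(u)\to 0$). Your divergence lemma is exactly the explicit version of the paper's Remark~\ref{rem:ineq}. The trade-off is that the paper's formulation produces reusable $\liminf/\limsup$ estimates that feed into Theorem~\ref{th:full_conv}, whereas yours is a self-contained, more elementary argument tailored to this one statement. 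One small presentational remark: in the step ``$M(t)\le\max(M(s),K(t))$ implies $M(t)$ bounded'' it is worth noting explicitly that the two branches of the max are handled separately (the case $M(t)\le K(t)$ gives $M(t)\le\Phi_A(0)\epsilon_s/(1-\|\Phi_A\|_L\kappa_1\epsilon_s)$); you do state this, and it is correct.
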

We retrieve the same phenomenon described in Remark~\ref{rem:effect_inhib} in an even more universal way: in the regime of full connectivity $ \kappa_{ 2}>0$ and $ \kappa_{ 4}>0$, population $A$ cannot be supercritical, even for arbitrary large values of $ \kappa_{ 1}$ (possibly larger than $1$, when population $A$, isolated from $B$, is supercritical). Note that Theorem~\ref{th:A_subcritical} only states a weak form of subcriticality: we only prove that $ \lim\inf _{t\to\infty} \lambda_{ t}^{A}<\infty$. In  the whole generality of the hypotheses of Theorem~\ref{th:A_subcritical}, one cannot rule out the possibility of having $ \lim\sup_{t\to\infty}\lambda_{  t}^{A}=+\infty$, although this has not been observed numerically. 

Proof of Theorem~\ref{th:A_subcritical} is given in Section~\ref{sec:th_A_subcritical}.

 \subsubsection{ Specifying to the linear model \label{sec:B_supercritical}}
 We now provide  more precise estimates in the case of the linear Model \eqref{eq:lambdaAB_lin} of Example~\ref{ex:linear}. The large time  asymptotic of \eqref{eq:lambdaAB_lin} depends crucially on the behavior of the population $B$. The critical parameter here is $ \kappa_{ 3}$: $ \kappa_{ 3}<1$ (resp. $ \kappa_{ 3}>1$) means that population $B$,  isolated from population $A$, is subcritical (resp. supercritical). Let us mention briefly the easiest second case $ \kappa_{ 3}>1$:
\begin{proposition}
\label{prop:B_sup}
Consider Model \eqref{eq:lambdaAB_lin} satisfying Assumptions~\ref{ass:main_h},~\ref{ass:model_intro} and~\ref{ass:Phis} with $\kappa_{2}>0$ and $\kappa_{4}>0$. Suppose also that $ \mu_{ B}>0$ and that population $B$, in isolation, is supercritical, i.e. 
$\kappa_{ 3}>1.$
Then, the following is true:
$ \lambda_{ t}^{B} \xrightarrow[ t\to \infty]{} +\infty$ and 
$\lambda_{  t}^{A} \xrightarrow[ t\to \infty]{}0.$
\end{proposition}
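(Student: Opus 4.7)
The plan is to deduce the result from Theorem~\ref{th:A_subcritical} by first establishing that $\lambda_{t}^{B}\to +\infty$, the conclusion $\lambda_{t}^{A}\to 0$ then following directly from assertion~(2) of that theorem. Divergence of $\lambda^{B}$ will be obtained by bounding it below by a standard supercritical linear Hawkes intensity, exploiting the non-negativity of the retroaction term $\Phi_{A\to B}$ together with the nonnegativity $h_{i}\geq 0$ provided by Assumption~\ref{ass:Phis}.

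More precisely, since $\Phi_{A\to B}\geq 0$, the second line of \eqref{eq:lambdaAB_lin} yields the pointwise inequality
\begin{equation*}
\lambda_{t}^{B}\geq \mu_{B}+(1-\alpha)\int_{0}^{t}h_{3}(t-u)\lambda_{u}^{B}\,\rmd u,\quad t\geq 0.
\end{equation*}
Let $\bar\lambda_{t}$ denote the unique nonnegative solution of the associated linear Volterra equation $\bar\lambda_{t}=\mu_{B}+(1-\alpha)\int_{0}^{t}h_{3}(t-u)\bar\lambda_{u}\,\rmd u$, that is, the intensity of the linear Hawkes process for population $B$ in isolation from $A$. The difference $u_{t}:=\lambda_{t}^{B}-\bar\lambda_{t}$ satisfies the same linear Volterra equation driven by the nonnegative forcing $\Phi_{A\to B}\bigl(\alpha\int_{0}^{t}h_{4}(t-u)\lambda_{u}^{A}\,\rmd u\bigr)\geq 0$, and since the convolution kernel $(1-\alpha)h_{3}$ is nonnegative, the resolvent (Neumann series) expansion gives $u_{t}\geq 0$, hence $\lambda_{t}^{B}\geq \bar\lambda_{t}$ for every $t\geq 0$.

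It remains to show $\bar\lambda_{t}\to +\infty$: this is the classical supercritical linear Hawkes regime, since $\mu_{B}>0$ and $\kappa_{3}=(1-\alpha)\|h_{3}\|_{1}>1$. The divergence can be invoked from \cite{MR3449317}, Theorem~11, or obtained directly through a Laplace-transform / renewal argument once one notices that the resolvent of $(1-\alpha)h_{3}$ is no longer integrable when $\kappa_{3}>1$; in fact one has exponential divergence with rate equal to the unique positive root of $(1-\alpha)\int_{0}^{\infty}{\rm e}^{-\gamma s}h_{3}(s)\,\rmd s=1$. Combining the two steps produces $\lambda_{t}^{B}\to +\infty$, and Theorem~\ref{th:A_subcritical}(2) then yields $\lambda_{t}^{A}\to 0$. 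The only point requiring some care is the supercritical linear Hawkes divergence itself, which implicitly uses the full integrability $h_{3}\in L^{1}$ so that $\kappa_{3}$ is defined; this is harmless, since it is built into the very hypothesis ``$\kappa_{3}>1$''.
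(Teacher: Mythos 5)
Your proof is correct, and the overall architecture matches the paper's: exploit $\Phi_{A\to B}\geq 0$ to lower-bound $\lambda^{B}$, establish divergence from $\kappa_{3}>1$ and $\mu_{B}>0$, then apply Theorem~\ref{th:A_subcritical}(2) to get $\lambda^{A}_{t}\to 0$. Where you diverge is in the key lemma driving the divergence step. You build a pointwise comparison $\lambda^{B}_{t}\geq \bar\lambda_{t}$ with the isolated linear Hawkes intensity (positivity of the Volterra resolvent with the nonnegative kernel $(1-\alpha)h_{3}$ applied to the nonnegative forcing $\Phi_{A\to B}(\cdot)$), and then quote the classical supercritical linear Hawkes divergence, e.g.\ \cite{MR3449317}, Th.~11. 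The paper instead recycles its a priori estimate \eqref{eq:ulB_ulA} from Proposition~\ref{prop:ineq_ells_1}: dropping the nonnegative $\Phi_{A\to B}$ term there yields $\underline\ell_{B}\geq \mu_{B}$ and $\underline\ell_{B}\geq \kappa_{3}\underline\ell_{B}$, and if $\underline\ell_{B}$ were finite one would have $\underline\ell_{B}>0$ together with $\underline\ell_{B}\geq\kappa_{3}\underline\ell_{B}>\underline\ell_{B}$, a contradiction, whence $\underline\ell_{B}=+\infty$. Your route gives a pointwise bound and, via the Laplace-transform remark, an explicit exponential growth rate, at the cost of importing a Volterra comparison principle and an external supercriticality theorem; the paper's route is shorter because it reuses limit-inf/limit-sup machinery it already developed for the fully coupled case. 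Both arguments are valid.
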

Proposition~\ref{prop:B_sup} is a straightforward consequence of Theorem~\ref{th:A_subcritical}: one has simply to check that  $ \lambda_{ t}^{ B} \xrightarrow[ t\to\infty]{}+\infty$ in this case. This follows directly from $ \lambda_{ t}^{B}\geq \mu_{ B} + (1- \alpha) \int_{ 0}^{t} h_{ 3}(t-u)  \lambda_{ u}^{B} {\rm d}u$ (recall that $ \Phi_{ A\to B}\geq0$). Details are given in Section~\ref{sec:th_B_sup} for the sake of completeness. 

\bigskip

From now on, we restrict to the case $\kappa_{ 3}<1$. Note that the arguments leading to Theorem~\ref{th:full_conv} in the fully coupled case $ \kappa_{ 2} \kappa_{ 4}>0$ can easily be adapted to the decoupled cases $ \kappa_{ 2} \kappa_{ 4}=0$ mentioned earlier (with a considerably simpler analysis in these cases). Nonetheless, we have chosen a separate exposition for purpose of clarity. Before stating  the main result of this paragraph, we introduce some notations. We are interested here in a priori bounds concerning
\begin{equation}
\label{eq:def_ells}
\begin{cases}
\underline\ell_{ A}&:= \liminf_{ t\to\infty} \lambda_{  t}^{A}\in [0, \infty],\hspace{1cm} \bar \ell_{ A}:= \limsup_{ t\to\infty} \lambda_{  t}^{A}\in [0, \infty],\\
\underline\ell_{ B}&:= \liminf_{ t\to\infty}  \lambda_{ t}^{B}\in [0, \infty],\hspace{1cm} \bar \ell_{ B}:= \limsup_{ t\to\infty}  \lambda_{ t}^{B}\in [0, \infty].
\end{cases}
\end{equation} 
For fixed $ \kappa_{ 1}, \kappa_{ 2}$, define the interval
\begin{equation}
\label{eq:I_kappas}
 \mathcal{ I}_{ \kappa_{ 1}, \kappa_{ 2}}:= \left\lbrace x\geq0,\ \kappa_{ 1} \Phi_{ B\to A} \left( \kappa_{ 2}x\right)< 1\right\rbrace.
\end{equation}
Under Assumption~\ref{ass:Phis}, $ \mathcal{ I}_{ \kappa_{ 1}, \kappa_{ 2}}=[0, +\infty)$ if $ \kappa_{ 1}<1$ and if $ \kappa_{ 1}\geq 1$ $ \mathcal{ I}_{  \kappa_{ 1}, \kappa_{ 2}}= (x^{ \ast}_{ \kappa_{ 1}, \kappa_{ 2}}, +\infty)$ for some $x^{ \ast}_{ \kappa_{ 1}, \kappa_{ 2}}>0$. Define 
the following functions 
\begin{align}
\Psi_{ 1}&:= x\in \mathcal{ I}_{ \kappa_{ 1}, \kappa_{ 2}} \mapsto \frac{ \mu_{ A}\Phi_{ B\to A} \left( \kappa_{ 2}x\right)}{1- \kappa_{ 1}\Phi_{ B\to A} \left( \kappa_{ 2}x\right)},\label{eq:Psi1}\\
\Psi_{ 2}&:= x\geq0 \mapsto\frac{ \mu_{ B}}{ 1- \kappa_{ 3}} + \frac{ \Phi_{ A\to B} \left( \kappa_{ 4} x\right)}{1- \kappa_{ 3}},\label{eq:Psi2}
\end{align}
as well as
\begin{align}
\Phi:= \Psi_{ 2}\circ \Psi_{ 1}:& x \in \mathcal{ I}_{ \kappa_{ 1}, \kappa_{ 2}}\mapsto \frac{ 1}{ 1- \kappa_{ 3}} \left( \mu_{ B}+  \Phi_{ A\to B} \left( \frac{ \kappa_{ 4}\mu_{ A}\Phi_{ B\to A} \left( \kappa_{ 2}x\right)}{1- \kappa_{ 1}\Phi_{ B\to A} \left( \kappa_{ 2}x\right)}\right)\right).\label{eq:Phi}
\end{align}
\begin{lemma}
\label{lem:Phi}
The function $ \Phi$ given by \eqref{eq:Phi} is continuous, nonincreasing and has a unique fixed-point $\ell \in \mathcal{ I}_{  \kappa_{ 1}, \kappa_{ 2}}$. This fixed-point satisfies 
\begin{equation}
\label{eq:lower_bound_ell}
\frac{ \mu_{ B}}{ 1- \kappa_{ 3}}\leq \ell.
\end{equation}
\end{lemma}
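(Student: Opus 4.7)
The plan is to first establish the monotonicity and continuity of $\Phi$, then combine them with the boundary behavior on $\mathcal{I}_{\kappa_1,\kappa_2}$ to run an intermediate value argument on the auxiliary function $g(x):=\Phi(x)-x$.

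First I would handle monotonicity. Under Assumption~\ref{ass:Phis}, the map $u \mapsto \frac{\mu_A u}{1-\kappa_1 u}$ is strictly increasing on the interval $[0, 1/\kappa_1)$ (or on $[0,1]$ if $\kappa_1 \leq 1$), and $x \mapsto \Phi_{B\to A}(\kappa_2 x)$ is nonincreasing with values in $[0,1]$. Composing the two shows $\Psi_1$ is nonincreasing on $\mathcal{I}_{\kappa_1,\kappa_2}$. Similarly, $\Psi_2$ is nondecreasing on $[0,\infty)$ because $\Phi_{A\to B}$ is nondecreasing. Therefore $\Phi = \Psi_2 \circ \Psi_1$ is nonincreasing on $\mathcal{I}_{\kappa_1,\kappa_2}$. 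Continuity is immediate: both $\Phi_{B\to A}$ and $\Phi_{A\to B}$ are Lipschitz by Assumption~\ref{ass:model_intro}, so $\Psi_1$ and $\Psi_2$ are continuous on their respective domains.

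Next I would describe the boundary behavior, treating the two structural cases together. If $\kappa_1 < 1$, then $\mathcal{I}_{\kappa_1,\kappa_2} = [0,\infty)$; at $x=0$, using $\Phi_{B\to A}(0)=1$, we get a finite value $\Phi(0) = \frac{\mu_B + \Phi_{A\to B}(\kappa_4 \mu_A/(1-\kappa_1))}{1-\kappa_3} \ge 0$, and as $x\to\infty$, $\Phi_{B\to A}(\kappa_2 x)\to 0$ gives $\Psi_1(x)\to 0$ and hence $\Phi(x) \to \frac{\mu_B}{1-\kappa_3}$. If $\kappa_1 \ge 1$, then $\mathcal{I}_{\kappa_1,\kappa_2} = (x^*_{\kappa_1,\kappa_2}, +\infty)$, and as $x \searrow x^*_{\kappa_1,\kappa_2}$ the denominator in $\Psi_1$ vanishes, so $\Psi_1(x)\to+\infty$ and hence $\Phi(x)\to+\infty$ (using that $\Phi_{A\to B}(y)\to+\infty$ as $y\to\infty$); the limit at $+\infty$ is again $\frac{\mu_B}{1-\kappa_3}$.

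Existence and uniqueness of the fixed point then follow from the intermediate value theorem applied to $g(x) = \Phi(x) - x$. Since $\Phi$ is nonincreasing, $g$ is strictly decreasing. In either case above, $g$ takes a strictly positive value near the left endpoint of $\mathcal{I}_{\kappa_1,\kappa_2}$ (either $g(0) = \Phi(0) \geq \frac{\mu_B}{1-\kappa_3} \geq 0$, with $\Phi(0)>0$ as soon as $\mu_B>0$ or $\mu_A>0$; the degenerate case $\mu_A=\mu_B=0$ gives $\Phi\equiv 0$ with $\ell=0$) and $g(x) \to -\infty$ as $x\to\infty$, so $g$ has exactly one zero $\ell \in \mathcal{I}_{\kappa_1,\kappa_2}$. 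Finally, since $\Phi_{A\to B}$ is nonnegative, every $x \in \mathcal{I}_{\kappa_1,\kappa_2}$ satisfies $\Phi(x) \ge \frac{\mu_B}{1-\kappa_3}$; applying this to $x=\ell$ yields $\ell = \Phi(\ell) \ge \frac{\mu_B}{1-\kappa_3}$, which is \eqref{eq:lower_bound_ell}.

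I do not expect any real obstacle here; the one bookkeeping point to be careful about is the degenerate corner case $\mu_A=\mu_B=0$ and, when $\kappa_1\ge 1$, verifying that $\Phi$ indeed blows up at $x^*_{\kappa_1,\kappa_2}$, which requires that $\kappa_4>0$ (true in the fully-coupled setting where this lemma will be used) so that $\Phi_{A\to B}(\kappa_4\Psi_1(x))\to+\infty$.
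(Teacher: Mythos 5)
Your proof is correct and follows essentially the same route as the paper's (which is terse, invoking the monotonicities of $\Psi_1,\Psi_2$ from Assumption~\ref{ass:Phis} and the blow-up of $\Phi$ at $x^*_{\kappa_1,\kappa_2}$ when $\kappa_1\ge1$); you simply flesh it out with the explicit intermediate value argument on $g(x)=\Phi(x)-x$ and careful bookkeeping of the boundary limits and degenerate cases. The implicit reliance on $\mu_A>0$ (and $\kappa_4>0$) for the blow-up when $\kappa_1\ge1$, which you correctly flag, is also tacit in the paper and inherited from the surrounding hypotheses.
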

\begin{proof}[Proof of Lemma~\ref{lem:Phi}]
It is a direct consequence of Assumption~\ref{ass:Phis}, since $ \Psi_{ 1}$ is nonincreasing and $ \Psi_{ 2}$ is nondecreasing. If $ \kappa_{ 1}\geq1$, $ \Phi(x) \xrightarrow[ x\to x^{ \ast}_{ \kappa_{ 1}, \kappa_{ 2}}]{}+ \infty$, as $ \lim_{ +\infty} \Phi_{ A\to B}= +\infty$. Since $ \Phi_{ A\to B}\geq0$, the a priori bound \eqref{eq:lower_bound_ell} is straightforward.
\end{proof}
Finally, define  the following domain
\begin{align}
\mathcal{ U}_{ \Phi}&:=  \left\lbrace (u, v)\in \left(\mathcal{ I}_{ \kappa_{ 1}, \kappa_{ 2}}\right)^{ 2},\, \Phi(v)\leq u \leq \ell\leq v \leq \Phi(u),\ \frac{ \mu_{ B}}{ 1- \kappa_{ 3}}\leq u\right\rbrace.\label{eq:def_UPhi}
\end{align}
The set $ \mathcal{ U}_{ \Phi}$ is nonempty, as $(\ell, \ell)\in \mathcal{ U}_{ \Phi}$, by \eqref{eq:lower_bound_ell}. 
\begin{assumption}
\label{ass:U}
The set $ \mathcal{ U}_{ \Phi}$ is a singleton: 
 $\mathcal{ U}_{ \Phi}= \left\lbrace (\ell, \ell)\right\rbrace.$
\end{assumption}

Hereafter in Proposition~\ref{prop:verify_U} we provide sufficient conditions for Assumption~\ref{ass:U} as well as examples in Section~\ref{sec:examples} where these are satisfied. The main convergence result of this paragraph is the following:
\begin{theorem}
\label{th:full_conv}
Consider Model \eqref{eq:lambdaAB_lin} satisfying Assumptions~\ref{ass:main_h},~\ref{ass:model_intro} and~\ref{ass:Phis} with  $\kappa_{2}>0$, $\kappa_{4}>0$, $\kappa_{3}<1$ and $ \mu_{ A}>0$. Then the following is true:
\begin{enumerate}
\item Population $B$ is subcritical: $\sup_{ t\geq0}  \lambda_{ t}^{B}<\infty$ and one has the a priori bounds (recall the definition of $ \mathcal{ I}_{ \kappa_{ 1}, \kappa_{ 2}}$ in \eqref{eq:I_kappas}): 
\begin{equation}
\label{eq:apriori_ellsB}
\underline{ \ell}_{ B}\geq \frac{ \mu_{ B}}{ 1- \kappa_{ 3}} \text{ and }\bar \ell_{ B}\in \mathcal{ I}_{ \kappa_{ 1}, \kappa_{ 2}}.
\end{equation}
\item If $ \lambda_{  t}^{A}$ converges as $t\to\infty$ to $\ell_{ A}$, then $  \lambda_{ t}^{B}$ converges as well, to $ \ell_{ B}= \Psi_{ 2}(\ell_{ A})$ where $ \Psi_{ 2}$ is given in \eqref{eq:Psi2}.
\item Suppose furthermore that Assumption~\ref{ass:U} is true. Then there is an equivalence between
\begin{enumerate}
\item $ \underline{ \ell}_{ B}$ verifies
\begin{equation}
\label{eq:uB_in_I}
\underline{ \ell}_{ B} \in \mathcal{ I}_{ \kappa_{ 1}, \kappa_{ 2}}
\end{equation} 
\item $  \lambda_{ t}^{B}$ as a finite limit as $t\to\infty$.
\end{enumerate}
If this is true, the limit $\lim_{ t\to \infty} \lambda_{ t}^{ B}:= \ell_{ B}=\ell$ where $ \ell$ is the fixed-point of $ \Phi$ given by \eqref{eq:Phi}. Moreover, $ \sup_{ t\geq0} \lambda_{  t}^{A}< \infty$ (population $A$ is  subcritical) and $ \lambda_{  t}^{A} \xrightarrow[ t \to \infty]{} \ell_{ A}:= \Psi_{ 1}(\ell_{ B})$.
\end{enumerate}
\end{theorem}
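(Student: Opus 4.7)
The overall strategy reduces the asymptotic analysis of the coupled system \eqref{eq:lambdaAB_lin} to a one-dimensional fixed-point study for $\Phi = \Psi_2 \circ \Psi_1$, by carefully taking $\limsup$ and $\liminf$ in both equations. The fundamental tool is the Fatou-type convolution lemma: for nonnegative $h \in L^1([0,\infty))$ and nonnegative locally bounded $f$,
\[
\|h\|_1\,\liminf_{t\to\infty} f(t) \;\leq\; \liminf_{t\to\infty} \int_0^t h(t-u) f(u)\,\rmd u,\qquad \limsup_{t\to\infty} \int_0^t h(t-u) f(u)\,\rmd u \;\leq\; \|h\|_1\,\limsup_{t\to\infty} f(t).
\]
Combined with the monotonicities of $\Phi_{B\to A}$ and $\Phi_{A\to B}$ from Assumption~\ref{ass:Phis}, this will yield the required a priori bounds on the four quantities in \eqref{eq:def_ells}.

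For part~1, the lower bound $\underline{\ell}_B \geq \mu_B/(1-\kappa_3)$ is obtained by dropping the nonnegative $\Phi_{A\to B}$-contribution in \eqref{eq:lambdaAB_lin}, which dominates $\lambda^B$ from below by a subcritical linear Hawkes intensity whose long-time limit is given by Theorem~10 of \cite{MR3449317}. The subcriticality $\sup_t \lambda^B_t < \infty$ together with $\bar{\ell}_B \in \mathcal{I}_{\kappa_1,\kappa_2}$ demands a bootstrap: using the positive a priori lower bound on $\underline{\ell}_B$, the strict decrease of $\Phi_{B\to A}$, and the non-blow-up of $\lambda^A$ provided by Theorem~\ref{th:A_subcritical}, one propagates the bound through the $\lambda^A$-equation, then through the $\lambda^B$-equation, to obtain first $\bar{\ell}_A < \infty$, then $\bar{\ell}_B < \infty$, and finally the strict inequality $\kappa_1 \Phi_{B\to A}(\kappa_2 \bar{\ell}_B) < 1$ characterizing membership to $\mathcal{I}_{\kappa_1,\kappa_2}$.

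Part~2 is a direct consequence of the convolution lemma: if $\lambda^A_t \to \ell_A$, then $\alpha \int_0^t h_4(t-u)\lambda^A_u\,\rmd u \to \kappa_4 \ell_A$ and $\Phi_{A\to B}(\alpha \int_0^t h_4(t-u)\lambda^A_u\,\rmd u) \to \Phi_{A\to B}(\kappa_4 \ell_A)$, so the linear renewal equation for $\lambda^B$ with subcritical kernel $(1-\alpha)h_3$ and this convergent forcing term yields $\lambda^B_t \to (\mu_B + \Phi_{A\to B}(\kappa_4 \ell_A))/(1-\kappa_3) = \Psi_2(\ell_A)$ via the Key Renewal Theorem. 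For part~3, the direction (b)$\Rightarrow$(a) is immediate from part~1. For (a)$\Rightarrow$(b), assuming $\underline{\ell}_B \in \mathcal{I}_{\kappa_1,\kappa_2}$, I take $\limsup$ and $\liminf$ in both equations of \eqref{eq:lambdaAB_lin}, inverting the linear part $1 - \kappa_1 \Phi_{B\to A}(\kappa_2\,\cdot\,)$ --- which is positive precisely by the assumed membership to $\mathcal{I}_{\kappa_1,\kappa_2}$ --- to obtain
\[
\bar{\ell}_A \leq \Psi_1(\underline{\ell}_B),\quad \underline{\ell}_A \geq \Psi_1(\bar{\ell}_B),\quad \bar{\ell}_B \leq \Psi_2(\bar{\ell}_A),\quad \underline{\ell}_B \geq \Psi_2(\underline{\ell}_A).
\]
Composing these (using that $\Psi_1$ is non-increasing and $\Psi_2$ non-decreasing, so $\Phi$ is non-increasing) yields $\Phi(\bar{\ell}_B) \leq \underline{\ell}_B$ and $\bar{\ell}_B \leq \Phi(\underline{\ell}_B)$, which together with the bounds from part~1 places $(\underline{\ell}_B, \bar{\ell}_B) \in \mathcal{U}_\Phi$. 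Assumption~\ref{ass:U} then forces $\underline{\ell}_B = \bar{\ell}_B = \ell$, and the squeeze $\Psi_1(\bar{\ell}_B) \leq \underline{\ell}_A \leq \bar{\ell}_A \leq \Psi_1(\underline{\ell}_B)$ gives $\lambda^A_t \to \Psi_1(\ell) = \ell_A$.

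The main obstacle is the subcriticality step of part~1 in the regime $\kappa_1 \geq 1$, where $A$ in isolation would be supercritical: the naive estimate $\Phi_{B\to A}(\kappa_2 \underline{\ell}_B) \leq 1$ is insufficient, and one must carefully combine Theorem~\ref{th:A_subcritical} (non-blow-up of $\lambda^A$) with the strict positivity of $\underline{\ell}_B$ and the strict decrease of $\Phi_{B\to A}$ to close the feedback loop and turn a non-strict inequality into a strict one. Once this is done, parts~2 and~3 are essentially algebraic consequences of the convolution lemma together with the fixed-point structure encoded by Assumption~\ref{ass:U}.
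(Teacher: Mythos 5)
Your overall strategy --- a priori $\limsup/\liminf$ estimates via a Fatou-type convolution lemma, then composition of $\Psi_1$ and $\Psi_2$ to place $(\underline{\ell}_B,\bar{\ell}_B)$ in $\mathcal{U}_\Phi$ and invoke Assumption~\ref{ass:U} --- is the paper's. Parts~2 and~3 are essentially on target; the only deviation in Part~2 is that the paper does not invoke the Key Renewal Theorem (which would require a direct-Riemann-integrability or spread-out condition on $(1-\alpha)h_3$) but simply specializes the sup/inf estimates $\bar{\ell}_B\le\Psi_2(\bar{\ell}_A)$, $\underline{\ell}_B\ge\Psi_2(\underline{\ell}_A)$, which only need $h_i\ge 0$ and $h_i(u)\to 0$ together with Remark~\ref{rem:ineq}.

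The genuine gap is in your Part~1, in the justification of $\bar{\ell}_B\in\mathcal{I}_{\kappa_1,\kappa_2}$. Your proposed bootstrap --- "obtain first $\bar{\ell}_A<\infty$, then $\bar{\ell}_B<\infty$, then the strict inequality" --- does not close. Theorem~\ref{th:A_subcritical} only gives the \emph{weak} non-blow-up $\underline{\ell}_A=\liminf_t\lambda^A_t<\infty$; it does \emph{not} give $\sup_t\lambda^A_t<\infty$, and indeed the strong bound $\sup_t\lambda^A_t<\infty$ only appears as a conclusion of Item~3 under the extra hypothesis \eqref{eq:uB_in_I}. Moreover, the "positive a priori lower bound on $\underline{\ell}_B$" is $\mu_B/(1-\kappa_3)$, which vanishes when $\mu_B=0$ (a case the theorem allows, and one that is used in the oscillation examples of the paper), so there is no strictly positive floor to feed into $\Phi_{B\to A}$; and Assumption~\ref{ass:Phis} only requires $\Phi_{B\to A}$ to be non-increasing, not strictly decreasing, so strictness cannot be borrowed from there.

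The paper's actual argument for $\bar{\ell}_B\in\mathcal{I}_{\kappa_1,\kappa_2}$ is an algebraic step that bypasses $\bar{\ell}_A$ entirely. The unconditional a priori bound \eqref{eq:ulA_blB} gives
\[
\underline{\ell}_A\bigl(1-\kappa_1\Phi_{B\to A}(\kappa_2\bar{\ell}_B)\bigr)\ \ge\ \mu_A\,\Phi_{B\to A}(\kappa_2\bar{\ell}_B)\ \ge\ 0 ,
\]
and one then uses only $0\le\underline{\ell}_A<\infty$ (Theorem~\ref{th:A_subcritical}) and $\mu_A>0$: if $\Phi_{B\to A}(\kappa_2\bar{\ell}_B)=0$ the conclusion is immediate, and if $\Phi_{B\to A}(\kappa_2\bar{\ell}_B)>0$ the right-hand side is strictly positive, which forces $\underline{\ell}_A>0$ and hence $1-\kappa_1\Phi_{B\to A}(\kappa_2\bar{\ell}_B)>0$. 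This is the step to internalize: the hypothesis $\mu_A>0$ is precisely what upgrades the non-strict inequality to the strict one, without any control on $\bar{\ell}_A$, without $\mu_B>0$, and without strict monotonicity of $\Phi_{B\to A}$.
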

Proof of Theorem~\ref{th:full_conv} is given in Section~\ref{sec:th_full_conv}.

\subsubsection{Remarks on the main convergence result}
\label{sec:remarks_gen}
Theorem~\ref{th:full_conv} comes with two hypotheses: firstly, Assumption~\ref{ass:U} for which we give sufficient conditions in Proposition~\ref{prop:verify_U} and is verified for several examples in Section~\ref{sec:examples}, and secondly, a technical a priori estimate \eqref{eq:uB_in_I} on $ \underline{ \ell}_{ B}$. Several remarks concerning these two hypotheses are necessary: firstly, from $ \underline{ \ell}_{ B}\geq \frac{ \mu_{ B}}{ 1- \kappa_{ 3}}$ and the fact $ \Phi_{ B\to A}$ is nonincreasing, we obtain immediately that \eqref{eq:uB_in_I} is  satisfied under the following sufficient condition:
\begin{equation}
\label{hyp:A_sub_muB_kappa3}
\kappa_{ 1} \Phi_{ B\to A} \left(\frac{ \mu_{ B}\kappa_{ 2}}{ 1- \kappa_{ 3}}\right)<1.
\end{equation}
This condition is the same subcriticality condition met in Proposition~\ref{prop:k2_pos_k4_0}. We retrieve the phenomenon mentioned in Remark~\ref{rem:effect_inhib}: inhibition has a tendency to quench supercriticality of population $A$. Numerical results however suggest that  the convergence of Theorem~\ref{th:full_conv} may remain true (and hence so is \eqref{eq:uB_in_I}) even though \eqref{hyp:A_sub_muB_kappa3} is violated (see Figure~\ref{fig:lambdaA_pol_CondlB} below). Remark also that the situation when $ \underline{ \ell}_{ B} \notin \mathcal{ I}_{ \kappa_{ 1}, \kappa_{ 2}}$ would imply, by \eqref{eq:apriori_ellsB}, $\underline{ \ell_{ B}} < \bar \ell_{ B}$ and thus correspond to a case where neither $ \lambda^{ B}$ nor $ \lambda^{ A}$ converge, but oscillate. Giving some necessary and sufficient condition for \eqref{eq:uB_in_I} (and in particular giving some sufficient condition for $\underline{\ell}_{ B}\notin \mathcal{ I}_{ \kappa_{ 1}, \kappa_{ 2}}$, hence proving the existence of oscillations for \eqref{eq:lambdaAB_lin}) would require to have some precise a priori upper bounds on $ \underline{ \ell}_{ B}$, that we do not have for the moment (what we easily get is a lower bound on $\underline{ \ell}_{ B}$, not a upper bound). Our conjecture is that \eqref{eq:uB_in_I} remains true as long as $ \Phi_{ B\to A}$ is convex (independently of \eqref{hyp:A_sub_muB_kappa3}), although we do not have a proof for this statement. We give in Figure~\ref{fig:lA_atan_k1_5} a numerical example of $\underline{\ell}_{ B}\notin \mathcal{ I}_{ \kappa_{ 1}, \kappa_{ 2}}$, in a case when $ \Phi_{ B\to A}$ is not convex.

\medskip

A crucial observation of the proof of Theorem~\ref{th:full_conv} is that, under \eqref{eq:uB_in_I}, we always have 
\begin{equation}
\label{eq:ellsBinU}
(\underline{\ell}_{ B}, \bar{\ell}_{ B}) \in \mathcal{ U}_{ \Phi}.
\end{equation} 
This fact, which does not rely on Assumption~\ref{ass:U}, is of independent interest, especially  in cases where we do not have convergence. In presence of oscillations for \eqref{eq:lambdaAB_lin} (see Section~\ref{sec:oscillations} for further details), \eqref{eq:ellsBinU} provides lower and upper bounds on the size of the limit cycle. A first consequence of \eqref{eq:ellsBinU} is that whenever $ \lambda_{ B}(t)$ converges as $t\to \infty$, $\underline{\ell}_{ B}=\bar{\ell}_{ B}$ so that (recall the definition \eqref{eq:def_UPhi} of $ \mathcal{ U}_{ \Phi}$) necessarily $\underline{\ell}_{ B}=\bar{\ell}_{ B}=\ell$. This, together with the previous remark that $ \underline{ \ell}_{ B} \notin \mathcal{ I}_{ \kappa_{ 1}, \kappa_{ 2}}$ would lead to oscillations, tells us that Theorem~\ref{th:full_conv} gives a complete picture of the possible limit for $(\lambda_{ t}^{ A}, \lambda_{ t}^{ B})$: if one among $ (\lambda_{ t}^{ A}, \lambda_{ t}^{ B})$ converges as $t\to \infty$, both do, and their limit are necessarily given by $\ell_{ A}= \Psi_{ 1}(\ell_{ B})$ and $\ell_{ B}=\ell$, unique fixed-point of $ \Phi$ in \eqref{eq:Phi}. 

Furthermore, we understand from \eqref{eq:ellsBinU} the point of Assumption~\ref{ass:U}: it gives immediately the convergence of $ \lambda^{ B}$. In this respect, one may find that Assumption~\ref{ass:U} is quite strong: but it is trivially verified in the uncoupled cases $ \kappa_{ 2} \kappa_{ 4}=0$, as the function $ \Phi$ is then constant, and it is satisfied for several nontrivial examples in the fully coupled case (see Section~\ref{sec:examples} below). However, we have numerical evidence (see Figure~\ref{fig:lambdaA_exp_p1}) that Assumption~\ref{ass:U} is in fact not necessary for the convergence of $( \lambda^{ A}, \lambda^{ B})$. In this context, finding a general necessary and sufficient condition for convergence of $(\lambda^{ A}, \lambda^{ B})$ seems to be a tricky question.

\begin{remark}
\label{rem:uniform_bound_lA}
From Theorem~\ref{th:A_subcritical}: under hypotheses of full connectivity, population $A$ cannot be supercritical (in the sense of $\liminf_{ t\to\infty} \lambda_{ t}^{ A}< \infty$), regardless the parameters of the model. Theorem~\ref{th:full_conv} provides a stronger version of this statement:  $\sup_{ t\geq 0} \lambda_{ t}^{ A}<\infty$, although we do not provide an explicit control on $\sup_{ t\geq 0} \lambda_{ t}^{ A}$ (explicit estimates are only on $\limsup_{ t\to\infty} \lambda^{ A}_{ t}$). Recall however what we have said in Remark~\ref{rem:maj_cst_conv}: using  $ \Phi_{ B\to A}\leq 1$, population $A$ is  bounded by a linear Hawkes process with memory kernel $ \alpha h_{ 1}$. Therefore, in the particular case of $ \kappa_{ 1}<1$ (which is in particular stricter than \eqref{hyp:A_sub_muB_kappa3}), we have directly that $ \sup_{ t\geq 0}\lambda^{ A}_{ t} \leq \frac{ \mu_{ A}}{ 1- \kappa_{ 1}}$ (see Lemma 23 of \cite{MR3449317}). Deriving similarly an explicit bound in the general case (for example under \eqref{hyp:A_sub_muB_kappa3} only) is unclear.
\end{remark}

\section{Fluctuations results and a test of inhibition}
\label{sec:fluctuations}
In this section, we are interested in fluctuation results corresponding to the convergence given in Proposition~\ref{prop:conv} and the derivation of corresponding statistical tests for inhibition.
\subsection{Hierarchy of intensities under inhibition and/or retroaction}
\label{sec:fluct_hierarchy}
We restrict to the framework of Model \eqref{eq:lambdaAB_lin} satisfying Assumptions~\ref{ass:main_h},~\ref{ass:model_intro} and~\ref{ass:Phis}. Suppose also that Assumption~\ref{ass:U} as well as \eqref{hyp:A_sub_muB_kappa3} are satisfied. 

The previous analysis shows that  under these hypotheses convergence of $ \lambda_{ t}^{ A}$ as $t\to \infty$ holds in all cases (i.e. both when $ \kappa_{ 2} \kappa_{ 4}=0 $ and when $ \kappa_{ 2} \kappa_{ 4}>0$). 
Recall that Assumption~\ref{ass:U} holds trivially in the decoupled cases $ \kappa_{ 2} \kappa_{ 4}=0$ since $ \Phi$ defined in \eqref{eq:Phi} is then constant (and in such cases, the conditions $\kappa_{3}<1$ and \eqref{hyp:A_sub_muB_kappa3} for subcriticality are sharp).

\bigskip

The first main observation of this paragraph is that there is a natural ordering between the different limits $ \lim_{ t\to\infty} \lambda_{ t}^{ A} $, depending on the presence or absence of inhibition and retroaction: when $A$ is isolated ($\kappa_{2}=0$), we obtain from Proposition~\ref{prop:k2_0_k4_0} that
\begin{equation*}
\lambda_{  t}^{A} \xrightarrow[ t\to\infty]{} \frac{ \mu_{ A}}{ 1- \kappa_{ 1}}:= \ell_{ A}^{ \text{(I)}}.
\end{equation*}
When  inhibition from $B\to A$ is added, without feedback from $A\to B$ ($ \kappa_{ 2}>0, \kappa_{ 4}=0$), the conclusion of Proposition~\ref{prop:k2_pos_k4_0} is
\begin{equation*}
\lambda_{  t}^{A} \xrightarrow[ t\to\infty]{} \frac{ \mu_{ A}\Phi_{ B\to A} \left(\frac{\kappa_{2}\mu_{B}}{1-\kappa_{3}}\right)}{1- \kappa_{ 1}\Phi_{ B\to A} \left(\frac{\kappa_{2}\mu_{B}}{1-\kappa_{3}}\right)} := \ell_{ A}^{ \text{(II)}}.\end{equation*}Finally, in the fully coupled case $ \kappa_{ 2} \kappa_{ 4}>0$, one obtains from Theorem~\ref{th:full_conv} that
$$\lambda_{  t}^{A} \xrightarrow[ t\to\infty]{}\frac{ \mu_{ A}\Phi_{ B\to A} \left(\kappa_{2}\ell\right)}{1- \kappa_{ 1}\Phi_{ B\to A} \left( \kappa_{2}\ell\right)} := \ell_{ A}^{ \text{(III)}},$$
for $ \ell$ the unique fixed-point to the function $\Phi$ given in \eqref{eq:Phi}. Using that $\Phi_{B\to A}$ is non increasing and bounded by $1$ together with \eqref{eq:lower_bound_ell}, for $ \kappa_{ 4}>0$, the following natural order between the previous limits holds true
\begin{equation}\label{eq:lim_lbdA}
\ell_{ A}^{ \text{(I)}} > \ell_{ A}^{ \text{(II)}} > \ell_{ A}^{ \text{(III)}}.
\end{equation} 
Note that one has $ \ell_{ A}^{ \text{(III)}}= \ell_{ A}^{ \text{(III)}}(\kappa_{ 1}, \kappa_{ 2}, \kappa_{ 3}, \kappa_{ 4}) \xrightarrow[ \kappa_{ 4}\to 0]{} \ell_{ A}^{ \text{(II)}}$. 

\subsection{Central limit results}
\label{sec:CLT}
We take advantage of the strict ordering observed in \eqref{eq:lim_lbdA} in order to experimentally test the presence/absence of inhibition in a neuronal network modeled by \eqref{eq:lambdaAB_lin}. For \eqref{eq:lim_lbdA} to be useful, the convergence results seen before must be accompanied with fluctuation results. Fluctuation theorems for Hawkes processes (both in large population $N\to\infty$ and in large time $t\to \infty$) have already been considered in the literature \cite{MR3449317,DITLEVSEN20171840} and some of the existing results  directly apply to our case: for instance, in the setting of Example \ref{ex:linear} with $\kappa_{1}<1$, $\kappa_{3}<1$ and $\kappa_{2}=0$ (resp. $\kappa_{4}=0$), Theorem 10 of  \cite{MR3449317} provides Gaussian limits for the quantities $ \left(\sqrt{m_{t}^{A}}\left(\frac{Z^{i,A}}{m_{t}^{A}}-1\right)\right)_{i\in  I^{A}}$ (resp. $\big(\sqrt{m_{t}^{B}}(\frac{Z^{i,B}}{m_{t}^{B}}-1)\big)_{ i\in I^{B}}$), when $(t,N)\to(\infty,\infty)$ and where $I^{A}$ (resp. $I^{B}$) is a subset of $\{1,\ldots, N_{A}\}$ (resp. $\{N_{A}+1\ldots, N\}$)  and $(m^{A},m^{B})$ is the solution of \eqref{eq:mtAB}. Note nonetheless that the proof in \cite{MR3449317} appears to be specific to the case of linear Hawkes processes so that its direct application to our model with full connectivity is doubtful. 

An existing result more useful to our concern may be found in \cite{DITLEVSEN20171840}, Th.~2, where a central limit theorem for $K$ macroscopic populations of interacting nonlinear Hawkes processes is proven. In particular, a closer look to the proof of Theorem 2 of \cite{DITLEVSEN20171840} permits to realize that it does not rely on the specific shape of the intensities of the underlying Hawkes processes but only on the existence of a propagation of chaos estimates similar to Proposition~\ref{prop:conv}. This result can easily transpose to our setting: 
\begin{proposition}[Ditlevsen, L\"ocherbach, \cite{DITLEVSEN20171840}, Th.~2]
Consider Model \eqref{eq:lambdaAB_lin} satisfying Assumptions~\ref{ass:main_h}, \ref{ass:model_intro}, \ref{ass:Phis} and~\ref{ass:U}, either in the uncoupled cases $\kappa_{2}\kappa_{4}=0$ or with full coupling $\kappa_{2}\kappa_{4}>0$. Require also that 
\begin{equation}
\label{eq:cond_CLT}
\max\{(1+\kappa_{1}\frac{\mu_{A}}{1-\kappa_{1}})(\kappa_{1}+\kappa_{2}),\ (\kappa_{3}+\kappa_{4})\}<1.
\end{equation} 
Then,
for any subsets  $I^{A}_{k_{A}} \subset \{1,\ldots, N_{A}\}$ and $I^{B}_{k_{B}} \subset \{N_{A}+1,\ldots, N\}$ of fixed cardinality $k_{A}$ and $k_{B}$ (independent of $N$) 
\begin{equation}
\label{eq:TCLAB} \left(\sqrt{m_{t}^{A}}\left(\frac{Z_{t}^{i,A}}{m_{t}^{A}}-1\right)\ ,\ \sqrt{m_{t}^{B}}\left(\frac{Z_{t}^{j,B}}{m_{t}^{B}}-1\right)\right)_{i\in I^{A}_{k_{A}},\ j\in  I^{B}_{k_{B}}}\xrightarrow[\overset{(t , N ) \to (\infty,\infty),}{\frac tN\to0}]{d}\mathcal N(0,I_{k_{A}+k_{B}}).
\end{equation}
\end{proposition}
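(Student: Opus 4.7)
The strategy, following Ditlevsen--L\"ocherbach, is to couple each interacting process $Z^{i}$ to the corresponding nonlinear mean-field process $\bar Z^{i}$ of \eqref{eq:barZ_gen}, and then observe that the limit processes are independent inhomogeneous Poisson processes with deterministic intensities, for which a standard CLT applies coordinatewise. The central ingredient that makes this strategy work in large time is the \emph{linear-in-$T$} version of the propagation of chaos estimate recorded in Remark~\ref{rem:maj_cst_conv}: under the linear model of Example~\ref{ex:linear} the uniform a priori bound $\sup_{t\geq 0} \lambda_{t}^{A}\leq \mu_{A}/(1-\kappa_{1})$ holds (via $\Phi_{B\to A}\leq 1$ and domination by a subcritical linear Hawkes process), so that the extra constraint of Remark~\ref{rem:maj_cst_conv} reduces exactly to assumption \eqref{eq:cond_CLT}. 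Under \eqref{eq:cond_CLT} we therefore get, for the coupling $(Z^{i},\bar Z^{i})$ driven by the same Poisson measures,
\begin{equation*}
\mathbf{E}\Bigl[\sup_{s\in[0,t]}\bigl\lvert Z_{s}^{i,A}-\bar Z_{s}^{i,A}\bigr\rvert\Bigr]\leq \frac{\tilde C\, t}{\sqrt{N}},\qquad \mathbf{E}\Bigl[\sup_{s\in[0,t]}\bigl\lvert Z_{s}^{j,B}-\bar Z_{s}^{j,B}\bigr\rvert\Bigr]\leq \frac{\tilde C\, t}{\sqrt{N}},
\end{equation*}
with $\tilde C$ independent of $t$ and $N$.

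I would then decompose, for each $i\in I^{A}_{k_{A}}$,
\begin{equation*}
\sqrt{m_{t}^{A}}\Bigl(\frac{Z_{t}^{i,A}}{m_{t}^{A}}-1\Bigr)=\frac{\bar Z_{t}^{i,A}-m_{t}^{A}}{\sqrt{m_{t}^{A}}}+\frac{Z_{t}^{i,A}-\bar Z_{t}^{i,A}}{\sqrt{m_{t}^{A}}},
\end{equation*}
and analogously for $B$. Under the assumptions of the Proposition, Theorem~\ref{th:full_conv} (in the fully coupled case) and Propositions~\ref{prop:k2_0_k4_0}--\ref{prop:k2_0_k4_pos} (in the decoupled cases) together with Assumption~\ref{ass:U} and \eqref{hyp:A_sub_muB_kappa3} (which follows from \eqref{eq:cond_CLT} for $\mu_{A}/(1-\kappa_{1})$ large enough, or is otherwise already available in the uncoupled regimes) guarantee that $\lambda_{t}^{A}\to \ell_{A}\in(0,\infty)$ and $\lambda_{t}^{B}\to \ell_{B}\in(0,\infty)$, so that $m_{t}^{A}\sim \ell_{A}\,t$ and $m_{t}^{B}\sim \ell_{B}\,t$. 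Consequently the remainder term in the decomposition has $L^{1}$-norm bounded by $\tilde C\sqrt{t/(N\ell_{A})}$, which vanishes under the joint regime $t/N\to 0$, and similarly for $B$.

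For the main term, conditional on the (deterministic) intensity $t\mapsto \lambda_{t}^{A}$, each $\bar Z^{i,A}$ is an inhomogeneous Poisson process of mean $m_{t}^{A}$; the $\bar Z^{i,A}$ for $i\in I^{A}_{k_{A}}$ and $\bar Z^{j,B}$ for $j\in I^{B}_{k_{B}}$ are driven by $k_{A}+k_{B}$ independent pairs of Poisson measures $(\pi_{i},\tilde \pi_{i})$, hence are mutually independent. The compensated processes $M_{t}^{i,A}:=\bar Z_{t}^{i,A}-m_{t}^{A}$ are martingales with predictable bracket $m_{t}^{A}\to\infty$, so the martingale central limit theorem yields $M_{t}^{i,A}/\sqrt{m_{t}^{A}}\xrightarrow[t\to\infty]{d}\mathcal N(0,1)$, and mutual independence of the coordinates then produces the claimed joint limit $\mathcal N(0,I_{k_{A}+k_{B}})$. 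Slutsky's lemma combines the two steps and concludes.

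The main obstacle is step one: without \eqref{eq:cond_CLT}, the Gr\"onwall-type bound that controls the propagation of chaos would grow super-linearly (typically exponentially) in $T$, and after division by $\sqrt{m_{t}^{A}}\sim\sqrt{\ell_{A}t}$ the remainder would no longer vanish under the natural regime $t/N\to 0$. Condition \eqref{eq:cond_CLT} is precisely what keeps the contraction constants in the coupling argument below one, uniformly in $T$, and is the only place where this rather restrictive hypothesis is needed.
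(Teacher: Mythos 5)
Your proposal is correct and follows essentially the same route as the paper's sketch: couple to the nonlinear mean-field process via Proposition~\ref{prop:conv}, use the linear-in-$T$ constant of Remark~\ref{rem:maj_cst_conv} (which is precisely where \eqref{eq:cond_CLT} enters), note that $m_t^A/t$ and $m_t^B/t$ stay bounded away from $0$ and $\infty$, and conclude by the martingale/Poisson CLT for the independent limit processes. One minor slip: \eqref{eq:cond_CLT} forces $\kappa_1+\kappa_2<1$, hence $\kappa_1<1$ and thus \eqref{hyp:A_sub_muB_kappa3} holds automatically (since $\Phi_{B\to A}\le 1$), so the parenthetical hedge about $\mu_A/(1-\kappa_1)$ being ``large enough'' is unnecessary and somewhat misleading.
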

\begin{proof}[Sketch of proof]
 This is a straightforward consequence of \cite{DITLEVSEN20171840}, Th~2: it suffices to note that in all cases,  $m^{A}_{t}/t$ and $m^{B}_{t}/t$ are bounded away from 0 and $\infty$ (recall Propositions~\ref{prop:k2_0_k4_0} and~\ref{prop:k2_pos_k4_0} and Theorem~\ref{th:full_conv}). Condition \eqref{eq:cond_CLT}  ensures that the constant appearing in Proposition \ref{prop:conv} does not increase faster than $C t$, for a positive constant $C$ (see Remark \ref{rem:maj_cst_conv}). Putting everything together one has simply to follow the lines of \cite{DITLEVSEN20171840}, Th~2 to conclude.
\end{proof}

\subsubsection{Discussion on the statistical significance of the CLT}
\label{sec:discussion_CLT}
Here we discuss the practical significance of \eqref{eq:TCLAB}. Denote by $\ell$ the limit intensity of either po\-pu\-lation $A$ or $B$ appearing in the convergence results of Propositions~\ref{prop:k2_0_k4_0},~\ref{prop:k2_pos_k4_0} and Theorem~\ref{th:full_conv}. An estimator $\widehat\ell_{N,T}$ for this quantity may be given through the following approximations:
\[ \ell\underset{T\to \infty}\approx {\lambda_{T}}\underset{{N\to\infty}}\approx\frac{Z_{t}^{i}}{T}:=\widehat\ell_{N,T},\quad 1\le i\le N.\] Therefore, $\widehat\ell_{N,T}$ can be computed from the observation of one neuron (recall that in the mean field limit, neurons within the same population are interchangeable) in a $N$-particle system of Hawkes processes observed in a large time $T$. From a statistical viewpoint, a statement as in \eqref{eq:TCLAB} cannot be exploited directly as the unknown pa\-ra\-meter $\ell$ does not explicitly appear and secondly the normalization of $Z^{i}_{t}$ by $m_{t}$ is unknown. Replacing $\frac{Z^{i}_{t}}{m_{t}}$ by  $\frac{Z^{i}_{t}}{\widehat{m_{t}}}$ where $\widehat m_{t}=Z^{i'}_{t}$ for $i\ne i'$ for instance or $\widehat m_{t}=2(Z^{i}_{t}-Z^{i}_{\frac{t}{2}})$ is not satisfactory as information on $\ell$ will not emerge.
Instead, what is required here (for instance to derive testing procedures and confidence intervals) is a result of the form 
$\sqrt{T}\left(\widehat \ell_{N,T}- \ell\right)\xrightarrow[(T,N)\to(\infty,\infty)]{d}K$
 for some distribution $K$. 
 
The simple case of an isolated subcritical population $A$ with linear intensity (Model~\eqref{eq:lambdaAB_lin} and Proposition~\ref{prop:k2_0_k4_0}, with $ \kappa_{ 2}=0$) is of particular interest in order to build the test considered in the next paragraph. In this case, $\ell=\frac{\mu_{A}}{1- \kappa_{ 1}}$, for $\kappa_{1}= \alpha\|h_{1}\|_{1}<1$ and  the following decomposition holds \begin{align*}
\sqrt{T}\left(\widehat \ell_{N,T}-\ell\right)&=\sqrt{\frac{m_{T}}T}\sqrt{m_{T}}\left(\frac{Z^{i}_{T}}{m_{T}}-1\right)
 +\sqrt{T} \left(\frac{m_{T}}{T}-\frac{\mu_{A}}{1- \kappa_{ 1}}\right):= \sqrt{\frac{m_{T}}T}I_{N,T}(1)+I_{T}(2).
\end{align*} 
From \eqref{eq:TCLAB} (see also Theorem 10 of \cite{MR3449317}), it immediately follows that $I_{N,T}(1)\xrightarrow[]{d}\mathcal{N}(0,1)$ as $(T,N)\to(\infty,\infty)$, $ \frac{ T}{ N} \to 0$. Controlling the deterministic term $I_{2}(T)$ requires to evaluate the rate at which $m_{T}/T$ converges to its limit $\mu_{A}/(1- \kappa_{ 1}).$ This has been studied, in the linear case, in Lemma 5 of \cite{MR3054533} which implies that $I_{T}(2)=o(1)$ if $\int_{0}^{\infty}\sqrt{t}h(t)\rmd t<\infty$. Then, assuming $\int_{0}^{\infty}\sqrt{t}h(t)\rmd t<\infty$ and using that Proposition~\ref{prop:k2_0_k4_0} implies that $\frac{m_{T}}{T}\xrightarrow[T\to\infty]{}\ell$, we derive 
\begin{align}\label{eq:TCLstat}
\sqrt{T}\left(\widehat \ell_{N,T}-\ell\right)\xrightarrow[{(T,N)\to(\infty,\infty)}\atop{\frac{ T}{ N} \to 0}]{d}\mathcal{N}\left(0,\ell\right)\quad \mbox{and}\quad \sqrt{\frac{T}{\widehat \ell_{N,T}}}\left(\widehat \ell_{N,T}-\ell\right)\xrightarrow[(T,N)\to(\infty,\infty)\atop{\frac{ T}{ N} \to 0}]{d}\mathcal{N}\left(0,1\right).
\end{align}

\subsubsection{A test of inhibition} 
\label{sec:test_CLT}
Neurophysiologists studying synaptic function routinely isolate one type of synaptic coupling with specific compounds (very often toxins synthesized by plants, bacteria, spiders, etc. \cite{luo:15}). Using picrotoxin or bicuculline (two plant toxins) they can for instance suppress inhibition, that is, force our kernel $h_2$ to become uniformly zero. Based on  \eqref{eq:TCLstat}, we can then propose the following procedure in order to test if inhibition on a population $A$ is present. In a experimental preparation, neurons from the $A$ population can be continuously recorded in three successive conditions: control (no toxin applied); inhibition blocked (with picrotoxin or bicuculline); 'wash' (the physiologists' term to designate a return to the control condition after removal of a toxin). Collecting for a duration  $T$ the activities of the neurons of population $A$ and estimate their intensities with $\widehat\ell^{Control}_{N,T}$, $\widehat\ell^{Toxin}_{N,T}$ and $\widehat \ell^{Wash}_{N,T}$ respectively.

Denote by $(\ell^{{Control}},\ell^{Toxin})$ the \emph{true} intensities of population $A$ with and without inhibition blocked. If population $B$ has no effect onto population $A$, one should have $\ell^{Control}=\ell^{Toxin}$, otherwise following \eqref{eq:lim_lbdA} one should observe $\ell^{Control}>\ell^{Toxin}$. This suggests to consider the following testing procedure:
\begin{align*}
\begin{cases}
H_{0}\ :\ \mbox{Population $B$ has no inhibitive effect on population $A$},\\
H_{1}\ :\ \mbox{Population $B$ has an inhibitive effect on population $A$},
\end{cases}
\end{align*} or equivalently
\begin{align}
\label{eq:test}\begin{cases}
H_{0}\ :\ \ell^{Control}=\ell^{Toxin},\\
H_{1}\ :\ \ell^{Control}>\ell^{Toxin}.
\end{cases}
\end{align} Consider the test statistics $R_{N,T}=\widehat\ell^{Control}_{N,T}-\widehat \ell ^{Toxin}_{N,T},$ where $\widehat\ell^{Control}_{N,T}$ and $\widehat \ell ^{Toxin}_{N,T}$ are independent. Using \eqref{eq:TCLstat} and that under $H_{0}$ it holds $\ell^{Control}=\ell^{Toxin}$, we write:
\begin{align*}
\sqrt{T}(\widehat\ell^{Control}_{N,T}-\widehat \ell ^{Toxin}_{N,T})&=\sqrt{T}(\widehat\ell^{Control}_{N,T}- \ell ^{Control})-\sqrt{T}(\widehat \ell ^{Toxin}_{N,T}-\ell^{Toxin})\xrightarrow[(T,N)\to(\infty,\infty)\atop{\frac{ T}{ N} \to 0}]{d,\ H_{0}}\mathcal{N}(0,2\ell),
\end{align*}  from which we derive
\begin{align*}
\sqrt{\frac{T}{\widehat\ell^{Control}_{N,T}+\widehat \ell ^{Toxin}_{N,T}}}(\widehat\ell^{Control}_{N,T}-\widehat \ell ^{Toxin}_{N,T})&\xrightarrow[(T,N)\to(\infty,\infty)\atop{\frac{ T}{ N} \to 0}]{d,\ H_{0}}\mathcal{N}(0,1).
\end{align*}
Therefore, we get a test for the set of assumptions \eqref{eq:test} with asymptotic level $a\in(0,1)$ $$\Psi^{(a)}_{N,T}=\mathbf{1}_{\left\{\widehat\ell^{Control}_{N,T}-\widehat \ell ^{Toxin}_{N,T}\ge s_{a} \right\}}\quad \mbox{where}\quad s_{a}=\sqrt{\frac{\widehat\ell^{Control}_{N,T}+\widehat \ell ^{Toxin}_{N,T}}T}q^{1-a}_{\mathcal{N}(0,1)},$$ where $q^{r}_{\mathcal{N}(0,1)}$ the quantile of order $r$ of the distribution $\mathcal{N}(0,1)$.
Note that under $H_{1}$ the same decomposition holds if one adds  the additional diverging term $\sqrt{T}(\ell^{Control}-\ell^{Toxin})$ that ensures the consistency of the test $\Psi^{(a)}_{N,T}$.

\section{Examples in the fully-coupled subcritical linear case}
\label{sec:examples}
We place ourselves here in the context of Theorem~\ref{th:full_conv} concerning Model \eqref{eq:lambdaAB_lin}. Recall in particular that we assume full connectivity $\kappa_{2} \kappa_{ 4}>0$ with population $B$ subcritical: $ \kappa_{ 3}<1$. The aim of this paragraph is twofold: first, to give general sufficient conditions for Assumption~\ref{ass:U} used in Theorem~\ref{th:full_conv} and second, to give instances of kernels $ \Phi_{ A\to B}$ and $ \Phi_{ B\to A}$ that fulfil the hypotheses of Theorem~\ref{th:full_conv} (and this for which the convergence result described in Theorem~\ref{th:full_conv} is valid). Focus will be made on particular choices of  inhibition kernels $ \Phi_{ B\to A}$: in all the examples  below, we have simply chosen $\Phi_{ A\to B}(x)=x$. To simplify notations in the sequel, define the following parameters
\begin{equation}
\label{eq:ab}
a:=\frac{ \mu_{ B} \kappa_{ 2}}{ 1- \kappa_{ 3}}\quad \mbox{and}\quad b:= \frac{\kappa_{ 2} \kappa_{ 4}}{ 1- \kappa_{ 3}}.
\end{equation} 
Informally, the parameter $a$ measures the influence of population $B$ onto population $A$ whereas $b$ assesses the impact of the coupling between both populations. 
\subsection{Verifying Assumption~\ref{ass:U}}
\label{sec:verify_assU}
First, we provide general sufficient conditions for Assumption~\ref{ass:U}. Recall the definitions of $ \Phi$ and $ \mathcal{ U}_{ \Phi}$ in \eqref{eq:Phi} and \eqref{eq:def_UPhi}. 
\begin{proposition}
\label{prop:verify_U}
Suppose that Assumption~\ref{ass:Phis} is satisfied together with $\kappa_{3}<1$. Then, 
\begin{enumerate}
\item General sufficient conditions: Assumption~\ref{ass:U} is true under one of the following conditions:
\begin{enumerate}
\item \label{it:contraction}$ \Phi$ is a contraction on $ \mathcal{ I}_{ \kappa_{ 1}, \kappa_{ 2}} \cap \left[ \frac{ \mu_{ B}}{ 1- \kappa_{ 3}}, +\infty\right)$: \begin{equation}
\label{eq:Psi_contraction}
\left\vert \Phi(x) - \Phi(y)\right\vert < \left\vert x-y \right\vert,\quad\quad x\neq y.
\end{equation}
\item \label{it:Phi2} $ \Phi^{ 2}:= \Phi \circ \Phi$ has a unique fixed-point in $ \mathcal{ I}_{ \kappa_{ 1}, \kappa_{ 2}} \cap \left[ \frac{ \mu_{ B}}{ 1- \kappa_{ 3}}, +\infty\right)$.
\end{enumerate}
\item Suppose now that $ \Phi$ is strictly decreasing (that is the case when e.g. $ \Phi_{ A\to B}$ and $ \Phi_{ B\to A}$ are strictly increasing and decreasing respectively). Then Assumption~\ref{ass:U} is satisfied if and only if
\begin{align}
\label{eq:def_UPhi3}
u< \Phi\circ \Phi(u), \text{ on }  \left(\max \left(x_{ \kappa_{ 1}, \kappa_{ 2}}^{ \ast}, \frac{ \mu_{ B}}{ 1- \kappa_{ 3}} \right), \ell\right).
\end{align}

\end{enumerate}

\end{proposition}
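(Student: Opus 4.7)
The plan is to exploit the structure of $\mathcal{U}_{\Phi}$ through the iteration of $\Phi\circ\Phi$, building on the fact that $\Phi$ is continuous and nonincreasing (Lemma~\ref{lem:Phi}), so that $\Phi^{2}:=\Phi\circ\Phi$ is nondecreasing and continuous. The key preliminary observation, which I would establish first, is that any pair $(u,v)\in \mathcal{U}_{\Phi}$ (which by definition satisfies $u\leq \ell \leq v$) necessarily verifies
\begin{equation*}
\Phi^{2}(u)\leq u \leq \ell \leq v \leq \Phi^{2}(v).
\end{equation*}
Indeed, $v\leq \Phi(u)$ combined with $\Phi$ nonincreasing gives $\Phi(v)\geq \Phi^{2}(u)$, and together with $\Phi(v)\leq u$ this yields $\Phi^{2}(u)\leq u$; the inequality $v\leq \Phi^{2}(v)$ follows symmetrically from $\Phi(v)\leq u$ and the monotonicity of $\Phi$.

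For part~(1a), assuming $\Phi$ is a strict contraction on $\mathcal{I}_{\kappa_{1},\kappa_{2}}\cap [\mu_{B}/(1-\kappa_{3}),+\infty)$, I would suppose for contradiction that $(u,v)\in \mathcal{U}_{\Phi}$ with $u<v$. Then by definition $v\leq \Phi(u)$ and $\Phi(v)\leq u$, so that $\Phi(u)-\Phi(v)\geq v-u>0$, which directly contradicts \eqref{eq:Psi_contraction}. For part~(1b), the preliminary observation above shows that the sequences $u_{n}=\Phi^{2n}(u)$ and $v_{n}=\Phi^{2n}(v)$ are respectively nonincreasing and nondecreasing (since $\Phi^{2}$ is nondecreasing); both remain in $\mathcal{I}_{\kappa_{1},\kappa_{2}}\cap[\mu_{B}/(1-\kappa_{3}),+\infty)$ and are bounded (by $v$ from above and $u$ from below, using again the preserved inequalities $\Phi(v_{n})\leq u_{n}$ and $v_{n}\leq \Phi(u_{n})$). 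By continuity they converge to fixed-points of $\Phi^{2}$, which by assumption must equal $\ell$. Since $\ell$ is the unique fixed point of $\Phi$ (Lemma~\ref{lem:Phi}) and a fortiori of $\Phi^{2}$, this forces $u\geq \ell$ and $v\leq \ell$, hence $u=v=\ell$.

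For part~(2), assuming $\Phi$ strictly decreasing, the necessary and sufficient condition \eqref{eq:def_UPhi3} follows from both directions. For the sufficient direction, if $(u,v)\in \mathcal{U}_{\Phi}$ with $u<\ell$, then $u$ lies in the interval of \eqref{eq:def_UPhi3} (using the form of $\mathcal{I}_{\kappa_{1},\kappa_{2}}$ given below Lemma~\ref{lem:Phi}), so the hypothesis yields $u<\Phi^{2}(u)$, contradicting $\Phi^{2}(u)\leq u$; therefore $u=\ell$, and then $\ell\leq v\leq \Phi(u)=\Phi(\ell)=\ell$ forces $v=\ell$. For the converse, if \eqref{eq:def_UPhi3} fails, there is $u_{0}\in (\max(x^{\ast}_{\kappa_{1},\kappa_{2}},\mu_{B}/(1-\kappa_{3})),\ell)$ with $\Phi^{2}(u_{0})\leq u_{0}$; setting $v_{0}:=\Phi(u_{0})$ gives $v_{0}>\Phi(\ell)=\ell>u_{0}$ (here we use that $\Phi$ is strictly decreasing), and one verifies directly that $(u_{0},v_{0})\in \mathcal{U}_{\Phi}\setminus\{(\ell,\ell)\}$, contradicting Assumption~\ref{ass:U}.

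The main potential subtlety is ensuring the iterates in part~(1b) stay within $\mathcal{I}_{\kappa_{1},\kappa_{2}}$ when $\kappa_{1}\geq 1$ (so that $\Phi$ is only defined away from the boundary $x^{\ast}_{\kappa_{1},\kappa_{2}}$); this is handled by noting that the monotonicity of the iterates keeps them trapped between $u$ and $v$, both already in $\mathcal{I}_{\kappa_{1},\kappa_{2}}$ by definition of $\mathcal{U}_{\Phi}$. The rest reduces to the elementary monotonicity arguments sketched above.
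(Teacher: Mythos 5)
Your proof is correct and follows essentially the same route as the paper. The preliminary observation $\Phi^{2}(u)\leq u\leq \ell\leq v\leq \Phi^{2}(v)$ is exactly the chain of inequalities the paper writes down; part~(1a) is the same short contradiction; part~(1b) uses the same monotone-iteration argument (the paper only iterates on $u$ and recovers the conclusion for $v$ via $\ell\leq v\leq\Phi(u)=\ell$, while you track both sequences, which is a harmless redundancy). For part~(2) the paper reformulates $\mathcal{U}_{\Phi}$ directly using $\Phi^{-1}$ and reads off the equivalence $\Phi^{-1}(u)>\Phi(u)$, which after applying the decreasing map $\Phi$ is precisely your $u<\Phi\circ\Phi(u)$; your version makes both directions of the equivalence explicit and exhibits the counterexample pair $(u_{0},\Phi(u_{0}))$ for the converse, which the paper leaves implicit. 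Same substance, slightly more detail on your side.
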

\begin{remark}\label{rmk:PhiPhi}
Condition \eqref{eq:def_UPhi3} is true in particular if $ \Phi\circ \Phi$ is strictly concave on  $ \left(\max \left(x_{ \kappa_{ 1}, \kappa_{ 2}}^{ \ast}, \frac{ \mu_{ B}}{ 1- \kappa_{ 3}} \right), \ell\right)$ or if $ \Phi$ is differentiable in $\ell$, $ \left\vert \Phi^{ \prime}(\ell) \right\vert \leq1$ and $ \Phi^{ 2}$ is strictly convex on $ \left(\max \left(x_{ \kappa_{ 1}, \kappa_{ 2}}^{ \ast}, \frac{ \mu_{ B}}{ 1- \kappa_{ 3}} \right), \ell\right)$.\end{remark}

\begin{proof}[Proof of Proposition~\ref{prop:verify_U}]
The case~\ref{it:contraction}. is a straightforward consequence of the definition of $ \mathcal{ U}_{ \Phi}$ in \eqref{eq:def_UPhi} together with \eqref{eq:Psi_contraction}. Turn now to case~\ref{it:Phi2}.: suppose that $\Phi$ is such that $ \Phi^{ 2}$ has a unique fixed-point, which is necessarily $\ell$. From the definition \eqref{eq:def_UPhi} of $ \mathcal{ U}_{ \Phi}$ and the fact that $ \Phi$ is nonincreasing it follows that, for any $(u, v)\in \mathcal{ U}_{ \Phi}$,
\begin{equation*}
\Phi^{ 2}(u)\leq \Phi \left(v\right) \leq u \leq \ell \leq v \leq \Phi \left(u\right) \leq \Phi^{ 2}(v).
\end{equation*}
An immediate recursion from the latter inequality shows that $ \left( \Phi^{2n}(u)\right)_{ n\geq0}$ is (positive) non-increasing and hence converges to a fixed-point of $ \Phi^{ 2}$, which by uniqueness is $\ell$. Thus, $ \ell \leq \Phi^{ 2n}(u) \leq u \leq \ell$, which gives in particular $ u=\ell$ and since $ \ell \leq v \leq \Phi( u)= \Phi \left(\ell\right)=\ell$, this gives the result. Turn now to the proof when $ \Phi$ strictly decreases:  the domain $ \mathcal{ U}_{ \Phi}$ boils down to
\begin{align}
\mathcal{ U}_{ \Phi}&:=  \left\lbrace \max \left(x_{ \kappa_{ 1}, \kappa_{ 2}}^{ \ast}, \frac{ \mu_{ B}}{ 1- \kappa_{ 3}} \right)\leq u \leq \ell,\  \Phi^{ -1}(u) \leq v \leq \Phi(u)\right\rbrace.\label{eq:def_UPhi2}
\end{align}
So that Assumption~\ref{ass:U} is satisfied if and only if $ \Phi^{ -1}(u)> \Phi(u)$ on $ \left(\max \left(x_{ \kappa_{ 1}, \kappa_{ 2}}^{ \ast}, \frac{ \mu_{ B}}{ 1- \kappa_{ 3}} \right), \ell\right)$, which gives the result.
\end{proof}

\subsection{Polynomial inhibition}
\label{sec:pol_inhib}
Let $ \beta, \tau>0$ and consider in this paragraph
\begin{equation}
\label{eq:PhiBtoA_pol}
\Phi_{ B\to A}(x)= \frac{ 1}{ 1+ \tau x^{ \beta}}\quad \mbox{and}\quad \Phi_{ A\to B}(x)=x,\quad x\geq0.
\end{equation}
Thenn the function $ \Phi$ writes
\begin{equation}
\label{eq:Phi_pol}
\Phi(x)= \frac{ a}{ \kappa_{ 2}} + \frac{ \mu_{ A}b}{ \kappa_{ 2} \left(1- \kappa_{ 1}+ \tau \left(\kappa_{ 2}x\right)^{ \beta}\right)},\quad x\in  \mathcal{ I}_{ \kappa_{ 1}, \kappa_{ 2}},
\end{equation}
where the domain $ \mathcal{ I}_{ \kappa_{ 1}, \kappa_{ 2}}$ is given by
\begin{equation*}
\mathcal{ I}_{ \kappa_{ 1}, \kappa_{ 2}}= \begin{cases}
[0, +\infty) & \text{ if } \kappa_{ 1}<1,\\
\left( x_{ \kappa_{ 1}, \kappa_{ 2}}^{ \ast}, +\infty\right) = \left( \frac{ 1}{ \kappa_{ 2}} \left(\frac{ \kappa_{ 1}-1}{ \tau}\right)^{ \frac{ 1}{ \beta}}, +\infty\right) & \text{ if } \kappa_{ 1}\geq 1.
\end{cases}
\end{equation*}
\begin{proposition}
\label{prop:conv_pol}
Consider Model \eqref{eq:lambdaAB_lin} with $ \Phi_{ B\to A}$ and $ \Phi_{ A\to B}$ given by \eqref{eq:PhiBtoA_pol}, satisfying $ \kappa_{ 3}<1$, $\kappa_{ 2} \kappa_{ 4}>0$, $ \mu_{ A}>0$, $ \kappa_{ 1}< 1 + \tau a^{ \beta} $ and Assumptions~\ref{ass:main_h} and \eqref{hyp:h_to_0}. Then,   in the following cases:
\begin{enumerate}
\item when $ \beta\in (0, 1]$, 
\item when $ \beta>1$:
\begin{enumerate}
\item if $ \kappa_{ 1}<1$, at least when $ \tau$ is sufficiently small (weak inhibition) or when $ \tau$ is sufficiently large (strong inhibition).
\item if $ \kappa_{ 1}\geq 1$, at least when $ \tau$ is sufficiently large (strong inhibition),
\end{enumerate}
\end{enumerate} the conclusions of Theorem~\ref{th:full_conv} are valid, i.e. $(\lambda^{ A}_{ t}, \lambda^{ B}_{ t}) \xrightarrow[ t\to\infty]{} \left(\ell_{ A}, \ell_{ B}\right):=\left( \frac{ \mu_{ A}}{ 1- \kappa_{ 1}+ \tau \left(\kappa_{ 2} \ell\right)^{ \beta} }, \ell\right)$, where $\ell$ is the unique fixed-point of $ \Phi$ given by \eqref{eq:Phi_pol} on $\left[ \frac{ \mu_{ B}}{ 1- \kappa_{ 3}}, +\infty\right)$.
In the case $ \beta=1$, $\ell_{ B}=\ell$ can be explicitly computed as 
\begin{equation}
\label{eq:theoretical_l_beta1}
\ell= \frac{ -(1- \kappa_{ 1} - \tau a) + \left( \left(1- \kappa_{ 1} - \tau a\right)^{ 2} + 4 \tau \left((1- \kappa_{ 1})a + \mu_{ A}b\right)\right)^{ \frac{ 1}{ 2}}}{ 2 \tau \kappa_{ 2}}.
\end{equation}
\end{proposition}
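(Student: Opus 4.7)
The strategy is to invoke Theorem~\ref{th:full_conv}, whose hypotheses reduce here to (i) the a priori bound \eqref{eq:uB_in_I} and (ii) Assumption~\ref{ass:U}. For (i), we use the sufficient condition \eqref{hyp:A_sub_muB_kappa3}: since $\Phi_{B\to A}(\kappa_{2}\mu_{B}/(1-\kappa_{3}))=1/(1+\tau a^{\beta})$ by definition of $a$, \eqref{hyp:A_sub_muB_kappa3} is exactly the assumed inequality $\kappa_{1}<1+\tau a^{\beta}$. A by-product of the latter is that $x^{\ast}_{\kappa_{1},\kappa_{2}}<\mu_{B}/(1-\kappa_{3})$, so the relevant interval simplifies to $\mathcal{I}_{\kappa_{1},\kappa_{2}}\cap[\mu_{B}/(1-\kappa_{3}),+\infty)=[\mu_{B}/(1-\kappa_{3}),+\infty)$. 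For (ii), by Proposition~\ref{prop:verify_U}~(b), it then suffices to show that $\Phi^{2}:=\Phi\circ\Phi$ has a unique fixed point on this interval, which by Lemma~\ref{lem:Phi} must equal $\ell$.

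The key reduction is the following. Writing $y=\Phi(x)$ and setting $X:=x-a/\kappa_{2}$, $Y:=y-a/\kappa_{2}$, the equation $\Phi^{2}(x)=x$ translates into the coupled system $X(1-\kappa_{1}+\tau(\kappa_{2}Y+a)^{\beta})=\mu_{A}b/\kappa_{2}$ and $Y(1-\kappa_{1}+\tau(\kappa_{2}X+a)^{\beta})=\mu_{A}b/\kappa_{2}$. Nontrivial $2$-cycles ($X\neq Y$) thus correspond to distinct positive solutions of $g(X)=g(Y)$, where
\begin{equation*}
g(s):=\frac{1-\kappa_{1}+\tau(\kappa_{2}s+a)^{\beta}}{s}.
\end{equation*}
A direct computation yields
\begin{equation*}
s^{2}g'(s)=(\kappa_{1}-1)-\tau(\kappa_{2}s+a)^{\beta-1}\bigl[\,a+(1-\beta)\kappa_{2}s\,\bigr],
\end{equation*}
and an elementary estimate shows that the bracketed function is non-decreasing in $s$ when $\beta\in(0,1]$ (its derivative equals $\kappa_{2}^{2}s\beta(1-\beta)(\kappa_{2}s+a)^{\beta-2}\geq 0$), hence $s^{2}g'(s)\leq(\kappa_{1}-1)-\tau a^{\beta}<0$ under our standing hypothesis. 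Thus $g$ is strictly decreasing, hence injective, so no $2$-cycle exists and $\ell$ is the unique fixed point of $\Phi^{2}$. In the specific case $\beta=1$, the fixed-point equation $\Phi(\ell)=\ell$ reduces to the quadratic $\tau\kappa_{2}^{2}\ell^{2}+\kappa_{2}(1-\kappa_{1}-\tau a)\ell-(1-\kappa_{1})a-\mu_{A}b=0$, whose unique positive root is exactly formula~\eqref{eq:theoretical_l_beta1}.

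For $\beta>1$, the bracketed function above is no longer monotone and $g$ may cease to be monotone, so I would turn to asymptotics in $\tau$. In the weak-inhibition regime $\tau\to 0^{+}$ (available only when $\kappa_{1}<1$), the second term of $s^{2}g'(s)$ vanishes uniformly on compact subsets of $[0,+\infty)$, so $s^{2}g'(s)\to\kappa_{1}-1<0$ and the monotonicity argument of the previous paragraph still applies. In the strong-inhibition regime $\tau\to+\infty$, one reads from $\Phi(\ell)=\ell$ that $\ell\to a/\kappa_{2}$ from above, with $\kappa_{2}\ell-a\sim\mu_{A}b/(\tau a^{\beta})$, and consequently $|\Phi'(\ell)|\sim\mu_{A}b\beta/(\tau a^{\beta+1})\to 0$; combining this local contraction with a compactness argument on the shrinking range of $\Phi$ near $a/\kappa_{2}$ would give global uniqueness of the fixed point of $\Phi^{2}$ for $\tau$ sufficiently large.

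The main obstacle lies precisely in the case $\beta>1$ at intermediate $\tau$: without global monotonicity of $g$, uniqueness of the fixed point of $\Phi^{2}$ is no longer automatic, and the thresholds implicit in ``$\tau$ sufficiently small'' and ``$\tau$ sufficiently large'' have to be made quantitative through fine asymptotic control of $|\Phi'(\ell)|$ (and, for $\kappa_{1}\geq 1$, through the fact that $\tau$ is then forced to exceed $(\kappa_{1}-1)/a^{\beta}$ already by the standing hypothesis). The moderate-$\tau$ values when $\beta>1$ are left uncovered by our plan, consistent with the parameter zone where non-convergent oscillatory behavior is expected, cf.\ the discussion of Section~\ref{sec:remarks_gen} and the numerics of Section~\ref{sec:oscillations}.
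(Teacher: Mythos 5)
Your proof is correct, and for $\beta\in(0,1]$ it takes a genuinely different and arguably more elegant route than the paper's. The paper verifies Assumption~\ref{ass:U} via Remark~\ref{rmk:PhiPhi}, i.e.\ by showing $\Phi\circ\Phi$ is concave through a rather laborious differentiation (introducing auxiliary quantities $\rho$, $T_1$, $T_2$, $T_3$ and a monotonicity argument for an ancillary function $g$). You instead use Proposition~\ref{prop:verify_U}~(b) (uniqueness of the fixed point of $\Phi^2$) and observe that the change of variables $X=x-a/\kappa_2$, $Y=\Phi(x)-a/\kappa_2$ turns the $2$-cycle equation $\Phi^2(x)=x$ into the symmetric system $g(X)=g(Y)$ with $g(s)=(1-\kappa_1+\tau(\kappa_2 s+a)^\beta)/s$; the identity $s^2 g'(s)=(\kappa_1-1)-\tau(\kappa_2 s+a)^{\beta-1}[a+(1-\beta)\kappa_2 s]$ then gives $g'<0$ at once under $\kappa_1<1+\tau a^\beta$ since the last factor is increasing with minimum $a^\beta$ at $s=0$. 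That is a clean, self-contained argument and I checked the two computations ($s^2g'(s)$ and the derivative of the bracketed function) are correct. The translation of \eqref{hyp:A_sub_muB_kappa3} into $\kappa_1<1+\tau a^\beta$, the verification of \eqref{eq:uB_in_I}, and the quadratic for $\beta=1$ also match the paper.

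For $\beta>1$, however, your argument is underdeveloped compared to the paper's, which proceeds more directly: it bounds $|\Phi'(x)|$ on $[\mu_B/(1-\kappa_3),\ell)$ by $\mu_A b\beta(\kappa_2\ell)^{\beta-1}\tau/(1-\kappa_1+\tau a^\beta)^2$, uses the a priori bound $\ell\le a/\kappa_2+\mu_A b/\bigl(\kappa_2(1-\kappa_1+\tau a^\beta)\bigr)$ to see this is $o(1)$ both as $\tau\to0$ (when $\kappa_1<1$) and as $\tau\to\infty$, and invokes the contraction criterion Proposition~\ref{prop:verify_U}~(a). Your outline (weak-$\tau$: $s^2g'(s)$ converges to $\kappa_1-1<0$ on compacts; strong-$\tau$: $|\Phi'(\ell)|\to0$ and the range of $\Phi$ shrinks toward $a/\kappa_2$) points in the same direction, but it leaves the quantitative thresholds unquantified, does not verify that the relevant $X,Y$ indeed stay in the compact where the limit holds uniformly, and the ``combining this local contraction with a compactness argument'' step for strong $\tau$ is asserted rather than proved. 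These can be patched up, but the simplest repair is essentially to switch to the paper's contraction estimate on $\Phi'$ rather than continuing to argue through $g$.
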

\begin{remark}
Note that $ \beta\in [0, 1]$ corresponds to the case where $ \Phi_{ B\to A}$ (and hence $ \Phi$ itself) is convex. We stress the fact that, from a numerical point of view, convergence is actually valid beyond the hypotheses of Proposition~\ref{prop:conv_pol}, especially in cases when the hypothesis \eqref{hyp:A_sub_muB_kappa3} $ \kappa_{ 1}< 1+ \tau a^{ \beta}$ is no longer valid, see Figure~\ref{fig:lambdaA_pol_CondlB}.

The case $ \beta>1$ seems to reveal richer dynamical patterns, especially in the regime where $ \beta\gg 1$, where \eqref{eq:PhiBtoA_pol} is of sigmoid type and approaches (for fixed $ \tau$) the indicator function $ \mathbf{ 1}_{ [0,1]}$ as $ \beta\to \infty$, see Section~\ref{sec:oscillations} for further details. 
\end{remark}

\begin{proof}[Proof of Proposition~\ref{prop:conv_pol}]Note that the subcriticality condition \eqref{hyp:A_sub_muB_kappa3} writes here
$\kappa_{ 1}< 1+ \tau a^{ \beta}$, which is equivalent to $ \frac{ \mu_{ B}}{ 1- \kappa_{ 3}} \geq x^{ \ast}_{ \kappa_{ 1}, \kappa_{ 2}}$ and hence \eqref{eq:uB_in_I} is true. Suppose first $ \beta\in (0, 1]$ and define $ \rho(x):= 1- \kappa_{ 1} + \tau x^{ \beta}$. We show that $\Phi\circ \Phi$ is concave, from which we derive by Proposition \ref{prop:verify_U} point 2 that Assumption \ref{ass:U} is valid (see Remark \ref{rmk:PhiPhi}). Showing that $\Phi\circ\Phi$ is concave is equivalent to showing that $(\Phi\circ\Phi)'$ is decreasing. Simple computations show that $$(\Phi\circ\Phi)'(x)=\mu^{2}\frac{\rho'(\kappa_{2}x)}{\rho^{2}(\kappa_{2}x)}\frac{\rho'\left(a+\frac{\mu_A b}{\rho(\kappa_2 x)}\right)}{\rho^{2}\left(a+\frac{\mu_A b}{\rho(\kappa_2 x)}\right)},$$ therefore $(\Phi\circ\Phi)'$ has the same monotonicity as $F(y)G(y),\ y\ge a,$ where $F(y):=\frac{\rho'(y)}{\rho^{2}(y)}$ is positive and $G(y):=\frac{\rho'\left(a+\frac{\mu}{\rho(y)}\right)}{\rho^{2}\left(a+\frac{\mu}{\rho(y)}\right)}$  is positive, where we set $\mu=\mu_{A}b$. A sufficient condition for $H$ to be decreasing is  that $\frac{F'}{F}+\frac{G'}{G}<0$. First, tedious computations enable to write for $y\ge a$,
\begin{align*}
H(y):&=y\rho(y)\left(\frac{F'(y)}{F(y)}+\frac{G'(y)}{G(y)}\right),\\
&=(\beta-1)\rho(y)-\tau \beta y^{\beta}\left(2+\frac{\mu}{a\rho(y)+\mu}(\beta-1)-2\beta\frac{\mu}{a\rho(y)+\mu}\frac{\tau\left(a+\frac{\mu}{\rho(y)}\right)^{\beta}}{\rho\left(a+\frac{\mu}{\rho(y)}\right)}\right),\\
&= T_{1}(y)+T_{2}(y)
\end{align*} 
  where 
\begin{align*}
T_{1}(y):&=-(1-\beta)\left(\rho(y)+\frac{\beta\mu\tau y^\beta }{a\rho(y)+1}\right),\\
T_{2}(y) :&=-2\tau \beta y^{\beta}\left(1+\frac{\mu}{a\rho(y)+\mu}(\beta-1)-\beta\frac{\mu}{a\rho(y)+\mu}\frac{\tau\left(a+\frac{\mu}{\rho(y)}\right)^{\beta}}{\rho\left(a+\frac{\mu}{\rho(y)}\right)}\right).\end{align*}
As $\beta\le 1$, we immediately have that $T_{1}\le 0.$ To control the second term $T_{2}$, we write it as follows, using the definition of $\rho$,
\begin{align*}
T_{2}(y)&=-2\tau \beta y^{\beta}\frac{\mu}{a\rho(y)+\mu}\left(1+\frac{a\rho(y)}{\mu}+(\beta-1)-\beta\frac{\rho\left(a+\frac{\mu}{\rho(y)}\right)+\kappa_{1}-1}{\rho\left(a+\frac{\mu}{\rho(y)}\right)}\right),\\
&=-2\tau \beta y^{\beta}\frac{\mu}{a\rho(y)+\mu}\left(\frac{a\rho(y)}{\mu}+\beta\frac{1-\kappa_{1}}{\rho\left(a+\frac{\mu}{\rho(y)}\right)}\right),\\ 
&=-2\tau \beta y^{\beta}\frac{\mu}{a\rho(y)+\mu}\frac{1}{\rho\left(a+\frac{\mu}{\rho(y)}\right)}\left(\frac{a\rho(y)}{\mu}{\rho\left(a+\frac{\mu}{\rho(y)}\right)}+\beta(1-{\kappa_{1}})\right).
\end{align*}
Note that $T_{2}\le 0$ holds true whenever $\kappa_{1}\le 1$ but requires an additional justification  if $\kappa_{1}\in]1,1+\tau a^{\beta}[$. For that, we study the function $T_{3}(y):=\frac{a\rho(y)}{\mu}\rho\left(a+\frac{\mu}{\rho(y)}\right)+\beta
(1-\kappa_{1}),\ y\ge a$, which, as $\rho$ is increasing and $\mu\ge0$, has the same monotonicity as the function  $g:z\mapsto z\rho\left(a+\frac{1}{z}\right)$, $z>0$. Using that $\kappa_{1}< 1+\tau a^{\beta}$ and $\beta\le 1$, it holds that
 \begin{align*}
 g'(z)&=\frac{1}{z^{\beta}}\left(z^{\beta}(1-\kappa_{1})+\tau(1+az)^{\beta-1}(1-\beta+az)\right)
 \ge \frac{\tau}{z^{\beta}}\left(-(az)^{\beta}+(1+az)^{\beta-1}(1-\beta+az)\right)\ge 0.
 \end{align*} It follows that $g$ and $T_{3}$ are increasing and that $T_{3}$ attains its minimum  at $y=a$. We conclude using that  for all $y\ge a$ and $\kappa_{1}< 1+\tau a^{\beta}$
  \begin{align*}T_{3}(y)\ge T_{3}(a)&=\frac{a\rho(a)}{\mu}\rho\left(a+\frac{\mu}{\rho(a)}\right)+\beta
(1-\kappa_{1})\ge  \frac{a\rho(a)}{\mu}\left(-\tau a^{\beta}+\tau\left(a+\frac{\mu}{\rho(a)}\right)^{\beta}\right)-\beta\tau a^{\beta}\\
&=\tau \frac{a\rho(a)^{1-\beta}}{\mu}\left((a\rho(a)+\mu)^{\beta}-({a\rho(a)})^{\beta}-\beta ({a\rho(a)})^{\beta-1}\right)\ge 0
\end{align*} as $z\mapsto (z+\mu)^{\beta}-z^{\beta}-\beta z^{\beta-1}$ is increasing (as $\beta\le 1$ and $\mu\ge 0$) and is positive if $z=0$.
Gathering everything  together, we conclude that $T_{2}$ is negative, therefore $H$ is negative and $\Phi\circ\Phi$ is concave.  Assumption \ref{ass:U} is fulfilled and
 the conclusions of Theorem~\ref{th:full_conv} are valid.

Turn now to the case $ \beta>1$: we have $ \Phi^{ \prime}(x) = - \mu_{ A} b \frac{ \tau \beta (\kappa_{ 2}x)^{ \beta-1}}{ \left(1- \kappa_{ 1} + \tau (\kappa_{ 2} x)^{ \beta}\right)^{ 2}}$, so that for $x\in \left[\frac{ \mu_{ B}}{ 1- \kappa_{ 3}}, \ell\right)$, $ \left\vert \Phi^{ \prime}(x) \right\vert \leq \frac{  \mu_{ A}b \beta \left(\kappa_{ 2} \ell \right)^{ \beta-1} \tau}{ \left(1- \kappa_{ 1} + \tau a^{ \beta}\right)^{ 2}}$. Suppose first that $ \kappa_{ 1}< 1$. Note that since $ \ell \geq \frac{ \mu_{ B}}{ 1- \kappa_{ 3}}$, one obtains from \eqref{eq:Phi_pol} the following a priori bound on $\ell$: $ \ell \leq \frac{ a}{ \kappa_{ 2}} + \frac{ \mu_{ A} b }{ \kappa_{ 2} \left( 1- \kappa_{ 1} + \tau a^{ \beta}\right)}$, quantity that remains bounded as $ \tau\to 0$ and $ \tau\to \infty$. Putting everything together, this yields that $\sup_{ x \in \left[\frac{ \mu_{ B}}{ 1- \kappa_{ 3}}, \ell\right)} \left\vert \Phi^{ \prime}(x) \right\vert$ can be made strictly smaller than $1$ provided $ \tau \to 0$ or $\tau \to \infty$. In these regimes, $ \Phi$ is a contraction and Assumption~\ref{ass:U} holds, by Proposition~\ref{prop:verify_U}, which gives the result. The same argument works in the case $ \kappa_{ 1}\geq 1$, considering now only the limit of strong inhibition $\tau\to\infty$. Computing the fixed-point $\ell$ in the case $ \beta=1$ is basic analysis from \eqref{eq:Phi_pol}.
\end{proof}
\begin{figure}[h]
\centering
\includegraphics[width=\textwidth]{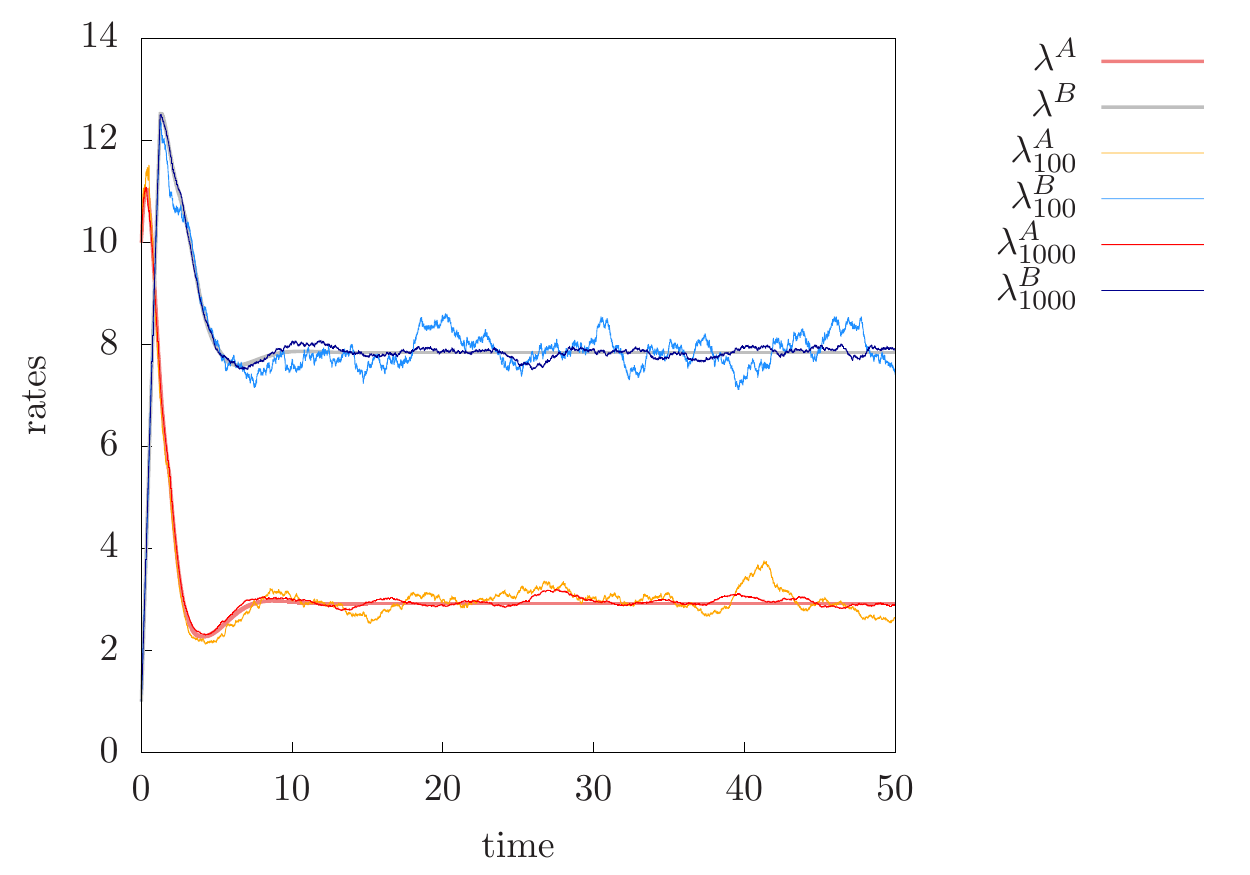}
\caption{Trajectories of the mean-field $ (\lambda^{ A}, \lambda^{ B})$ and their microscopic counterparts $( \lambda_{ N}^{ A}, \lambda_{ N}^{ B})$,  when $h_{ i}= \mathbf{ 1}_{ [0, \theta_{ i}]}$ for the Model \eqref{eq:PhiBtoA_pol} for $ \alpha=0.8$, $ \beta=1$, $ \kappa_{ 1}=1.5$, $ \kappa_{ 2}= 0.5$, $ \kappa_{ 3}=0.5$, $ \kappa_{ 4}=1$, $ \mu_{ A}=10$, $ \mu_{ B}=1$, $ \tau=1$, on $[0, T]$ with $T=50$: the conditions of Proposition~\ref{prop:conv_pol} are satisfied (note in particular that  $ 1<\kappa_{ 1}< 1+ \tau a=2$). The limit of $(\lambda^{ A}, \lambda^{ B})$ match their theoretical counterparts (recall \eqref{eq:theoretical_l_beta1}) $\ell_{ B}\approx 7.844$ and $\ell_{ A}= \Psi_{ 1}(\ell_{ B}) \approx 2.922$.}
\label{fig:lambdaA_pol_thOK}
\end{figure}

\begin{figure}[h]
\centering
\includegraphics[width=\textwidth]{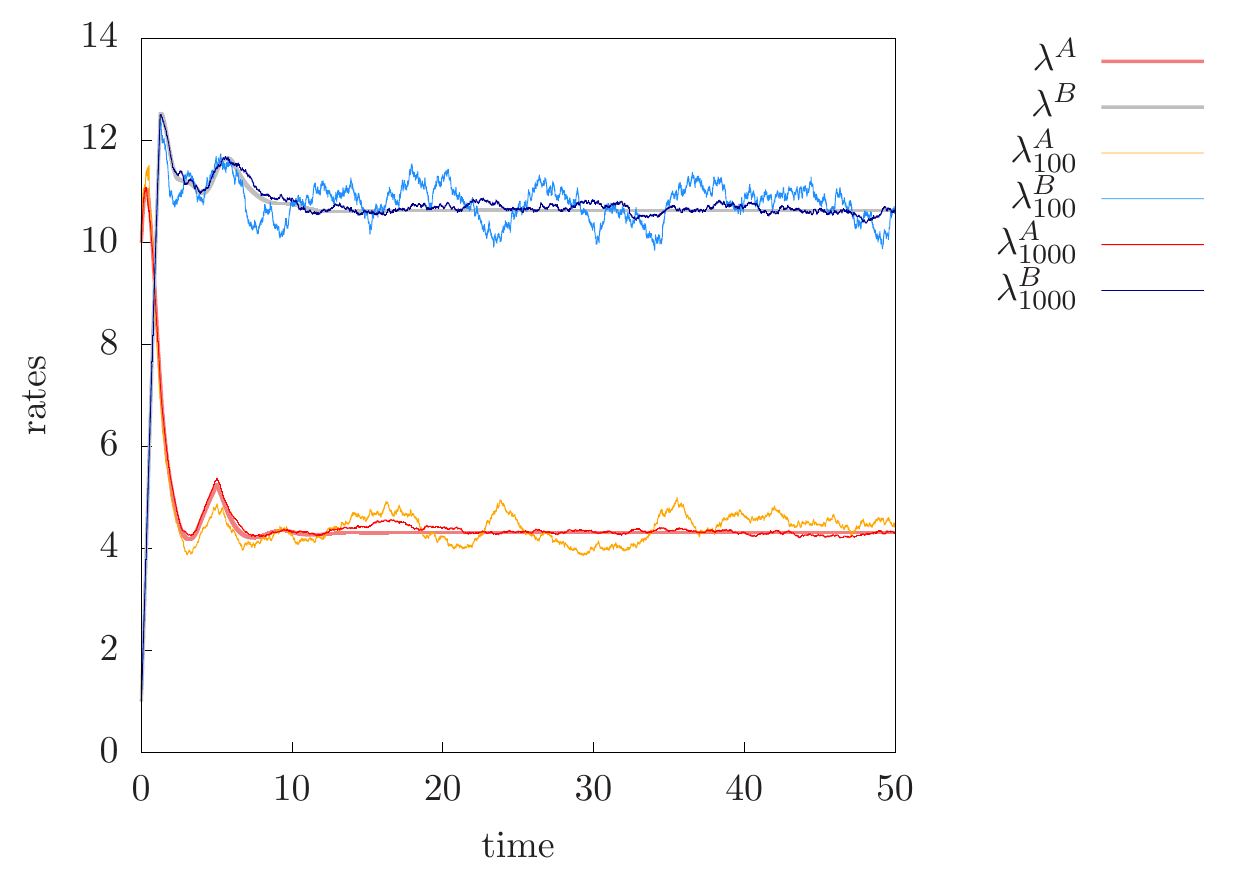}
\caption{Trajectories of the mean-field $ (\lambda^{ A}, \lambda^{ B})$ and their microscopic counterparts $( \lambda_{ N}^{ A}, \lambda_{ N}^{ B})$, when $h_{ i}= \mathbf{ 1}_{ [0, \theta_{ i}]}$ for the Model \eqref{eq:PhiBtoA_pol} for the same parameters as Figure~\ref{fig:lambdaA_pol_thOK}, except for $ \kappa_{ 1}$ where now $ \kappa_{ 1}=4$, so that condition \eqref{hyp:A_sub_muB_kappa3} no longer holds. We still observe convergence to the correct theoretical value (and \eqref{eq:uB_in_I} is nonetheless true).}
\label{fig:lambdaA_pol_CondlB}
\end{figure}
\subsection{Exponential inhibition} 
\label{sec:case_exp}
Consider in this paragraph the case ($ \tau>0$)
\begin{equation}
\label{eq:example_PhiBtoA_exp}
\Phi_{ B\to A} (x)= \exp \left(- \tau x\right)\quad \mbox{and}\quad\Phi_{ A\to B}(x)=x,\quad x\geq 0.
\end{equation}
The function $ \Phi$ rewrites in this case as
\begin{equation}
\label{eq:Phi_exp}
\Phi(x)= \frac{ a}{ \kappa_{ 2}} + \frac{ b\mu_{ A} \exp \left(- \tau \kappa_{ 2}x\right)}{ \kappa_{ 2} \left(1- \kappa_{ 1}\exp \left(- \tau \kappa_{ 2}x\right)\right)}
\end{equation}
defined on
\begin{equation*}
\mathcal{ I}_{ \kappa_{ 1}, \kappa_{ 2}}= \begin{cases}
[0, +\infty) & \text{ if } \kappa_{ 1}<1,\\
\left( \frac{ \ln \left(\kappa_{ 1}\right)}{ \tau \kappa_{ 2}}, +\infty\right) & \text{ if } \kappa_{ 1}>1.
\end{cases}
\end{equation*}
The sufficient subcriticality condition \eqref{hyp:A_sub_muB_kappa3} becomes here
\begin{equation}
\label{hyp:subcriticality_exp}
\kappa_{ 1} e^{ - \tau a}<1.
\end{equation}

\begin{proposition}
\label{prop:example_PhiBtoA_exp_contraction}
Consider Model \eqref{eq:lambdaAB_lin} with $ \Phi_{ B\to A}$ and $ \Phi_{ A\to B}$ given by \eqref{eq:example_PhiBtoA_exp}, satisfying $ \kappa_{ 3}<1$, $\kappa_{ 2} \kappa_{ 4}>0$, $ \mu_{ A}>0$, $ \kappa_{ 1} e^{ - \tau a}<1$ and Assumptions~\ref{ass:main_h} and \eqref{hyp:h_to_0}. Then, the conclusions of Theorem~\ref{th:full_conv} are valid, i.e. $(\lambda^{ A}_{ t}, \lambda^{ B}_{ t}) \xrightarrow[ t\to\infty]{} \left(\ell_{ A}, \ell_{ B}\right):=\left( \frac{ \mu_{ A} e^{ - \tau \kappa_{ 2} \ell}}{ 1- \kappa_{ 1} e^{ - \tau \kappa_{ 2} \ell}}, \ell\right)$, where $\ell$ is the unique fixed-point of $ \Phi$ given by \eqref{eq:Phi_exp} on $\left[ \frac{ \mu_{ B}}{ 1- \kappa_{ 3}}, +\infty\right)$, at least in the following cases:
\begin{enumerate}
\item when $ \kappa_{ 1}<1$, at least when $ \tau>0$ is sufficiently small (weak inhibition) and when $ \tau$ is sufficiently large (strong inhibition),
\item when $ \kappa_{ 1}\geq 1$, at least when $ \tau> \tau_{ \ast}$, for some $ \tau_{ \ast}> \frac{ \ln(\kappa_{ 1})}{ a}>0$ depending on the parameters of the system (strong inhibition).
\end{enumerate}
\end{proposition}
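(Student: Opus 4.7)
The strategy I would follow mirrors that of Proposition~\ref{prop:conv_pol}: verify Assumption~\ref{ass:U} by showing that the map $\Phi$ defined in \eqref{eq:Phi_exp} is a strict contraction on $\left[\tfrac{\mu_B}{1-\kappa_3},+\infty\right)$, and then invoke Theorem~\ref{th:full_conv} to conclude. First I would observe that in this exponential setting the hypothesis $\kappa_1 e^{-\tau a}<1$ is exactly the sufficient subcriticality condition \eqref{hyp:A_sub_muB_kappa3} written out, so that $\tfrac{\mu_B}{1-\kappa_3}\in\mathcal{I}_{\kappa_1,\kappa_2}$ and, via Lemma~\ref{lem:Phi}, the a priori estimate \eqref{eq:uB_in_I} is automatic. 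Only Assumption~\ref{ass:U} remains to be checked.

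The next step is to compute
\[ \Phi'(x)=-\tau b\mu_A\cdot\frac{e^{-\tau\kappa_2 x}}{\bigl(1-\kappa_1 e^{-\tau\kappa_2 x}\bigr)^2}, \]
and to notice that since $y\mapsto y/(1-\kappa_1 y)^2$ is increasing on $(0,1/\kappa_1)$, the function $|\Phi'|$ is monotone decreasing on $\left[\tfrac{\mu_B}{1-\kappa_3},+\infty\right)$. Its supremum is therefore attained at the left endpoint $x=\tfrac{\mu_B}{1-\kappa_3}$ (where $\kappa_2 x=a$) and equals
\[ M(\tau):=\frac{\tau b\mu_A\, e^{-\tau a}}{\bigl(1-\kappa_1 e^{-\tau a}\bigr)^2}. \]
Applying Proposition~\ref{prop:verify_U}(1)(a) then reduces the problem to showing $M(\tau)<1$ in each of the advertised regimes.

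This last step is a short one-variable analysis. If $\kappa_1<1$, the denominator is bounded below by $(1-\kappa_1)^2$ uniformly in $\tau>0$, while the numerator vanishes both as $\tau\to 0$ (linear prefactor) and as $\tau\to\infty$ (exponential decay dominates, using $a>0$), so $M(\tau)<1$ at least on a neighbourhood of $0$ and a neighbourhood of $+\infty$. If $\kappa_1\geq 1$, the subcriticality condition enforces $\tau>\ln(\kappa_1)/a$; the same exponential-versus-linear race yields $M(\tau)\to 0$ as $\tau\to\infty$, whence the existence of a threshold $\tau_*\geq\ln(\kappa_1)/a$ above which contraction holds. Once the contraction is in hand, Assumption~\ref{ass:U} follows from Proposition~\ref{prop:verify_U}, Theorem~\ref{th:full_conv} applies, and the advertised limit $(\ell_A,\ell_B)=(\Psi_1(\ell),\ell)$ is read off directly from $\Phi(\ell)=\ell$ together with \eqref{eq:Psi1}.

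The main obstacle, and the reason the proposition is stated only for small or large $\tau$, is the intermediate regime: for $\tau$ neither small nor large (in particular near the subcriticality threshold when $\kappa_1\geq 1$) the quantity $M(\tau)$ can exceed $1$, and the contraction argument becomes silent. Extending convergence across the whole subcritical range would require either working directly with $\Phi\circ\Phi$ (mimicking the concavity argument used for $\beta\in(0,1]$ in Proposition~\ref{prop:conv_pol}, which no longer goes through verbatim here because of the exponential nonlinearity) or obtaining sharper upper bounds on $\underline{\ell}_B$ beyond \eqref{eq:uB_in_I}; neither looks immediate.
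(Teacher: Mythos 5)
Your proof is correct and follows essentially the same route as the paper: compute $\Phi'$, observe that $\lvert\Phi'\rvert$ attains its supremum on $\left[\frac{\mu_B}{1-\kappa_3},+\infty\right)$ at the left endpoint (where it equals the paper's $r(\tau)$, your $M(\tau)$), check that this value is $<1$ in the small-$\tau$ and large-$\tau$ regimes, and invoke the contraction criterion of Proposition~\ref{prop:verify_U} together with Theorem~\ref{th:full_conv}. Your monotonicity argument for $\lvert\Phi'\rvert$ (via the substitution $y=e^{-\tau\kappa_2 x}$ and the increase of $y\mapsto y/(1-\kappa_1 y)^2$ on $(0,1/\kappa_1)$) is a slightly more explicit justification of a step the paper states without detail, but the strategy and the conclusion are identical.
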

\begin{proof}[Proof of Proposition~\ref{prop:example_PhiBtoA_exp_contraction}]
The derivative of $ \Phi$ on $ \mathcal{ I}_{ \kappa_{ 1}, \kappa_{ 2}} \cap \left[ \frac{ \mu_{ B}}{ 1- \kappa_{ 3}}, +\infty\right)$ writes
$\Phi^{ \prime}(x)= \frac{ -b\mu_{ A} \tau\exp \left(- \tau \kappa_{ 2}x\right)}{ \left(1- \kappa_{ 1}\exp \left(- \tau \kappa_{ 2}x\right)\right)^{ 2}}.$
If $ \kappa_{ 1}<1$, $ \mathcal{ I}_{ \kappa_{ 1}, \kappa_{ 2}}=[0, +\infty)$, the maximum of $x \mapsto \left\vert \Phi^{ \prime}(x) \right\vert$ on $\left[ \frac{ \mu_{ B}}{ 1- \kappa_{ 3}}, +\infty\right)$ is attained for $x= \frac{ \mu_{ B}}{ 1- \kappa_{ 3}}$ and is given by
\begin{equation}
\label{eq:r}
r(\tau):= \frac{b\mu_{ A}\tau \exp(- \tau a)}{ \left(1- \kappa_{ 1}\exp(- \tau a)\right)^{ 2}}
\end{equation}
In particular, $ r(\tau)<1$ as $ \tau\to 0$ and $ \tau\to +\infty$, imply that $ \Phi$ is a contraction, and hence Item 2 of Proposition~\ref{prop:verify_U} is true. More precisely, since $r(\tau)\leq \frac{b\mu_{ A}\tau}{ \left(1- \kappa_{ 1}\right)^{ 2}}$, $r(\tau)<1$ as least when $ \tau< \frac{ \left(1- \kappa_{ 1}\right)^{ 2}}{ b \mu_{ A}}$. Secondly, since $r(\tau) \leq \frac{ 2b \mu_{ A} }{ ea(1- \kappa_{ 1})^{ 2}} \exp \left(- \frac{ \tau a}{ 2}\right)$, $r(\tau)<1$ at least when $ \frac{ 2}{ a}\ln \left(\frac{ 2b \mu_{ A} }{ ea(1- \kappa_{ 1})^{ 2}}\right) <\tau$.
\medskip

\noindent
In the case $ \kappa_{ 1}>1$, suppose that \eqref{hyp:subcriticality_exp} holds, so that $ \frac{ \mu_{ B}}{ 1- \kappa_{ 3}} > \frac{ \ln(\kappa_{ 1})}{ \tau \kappa_{ 2}}=x_{ \kappa_{ 1}, \kappa_{ 2}}^{ \ast}$. Once again, the maximum of $ \left\vert \Phi^{ \prime}(x) \right\vert$ on $ \mathcal{ I}_{ \kappa_{ 1}, \kappa_{ 2}}\cap \left[ \frac{ \mu_{ B}}{ 1- \kappa_{ 3}}, +\infty\right)$ is attained for $x= \frac{ \mu_{ B}}{ 1- \kappa_{ 3}}$ and given by \eqref{eq:r}, which goes to $0$ as $ \tau\to\infty$ and hence $r(\tau)<1$ for some $ \tau_{ \ast} > \frac{ \ln(\kappa_{ 1})}{ a}$.
\end{proof}

\begin{remark}
As already underlined, Assumption~\ref{ass:U} is only sufficient for the convergence of Theorem~\ref{th:full_conv}, not necessary: in the exponential case of Section~\ref{sec:case_exp}, Figure~\ref{fig:lambdaA_exp_p1} shows a situation where $ \Phi\circ \Phi(u) < u$ for $u< \ell$ (and hence $ \mathcal{ U}_{ \Phi}$ is nontrivial). Nonetheless, convergence of $(\lambda^{ A}, \lambda^{ B})$ still occurs. The conjecture one may draw here is that convergence mentioned in Theorem~\ref{th:full_conv} holds as long as $ \Phi$ is convex, although we do not have a proof of this statement.
\end{remark}

\begin{figure}[h]
\centering
\subfloat[Graph of $ \Phi\circ \Phi$, where $ \Phi$ is given by \eqref{eq:Phi_exp}. Here, $\ell\approx 7.567$ and $ \Phi\circ \Phi(u)< u$ for $u\leq \ell$: Assumption~\ref{ass:U} does not hold.]{\includegraphics[width=0.45\textwidth]{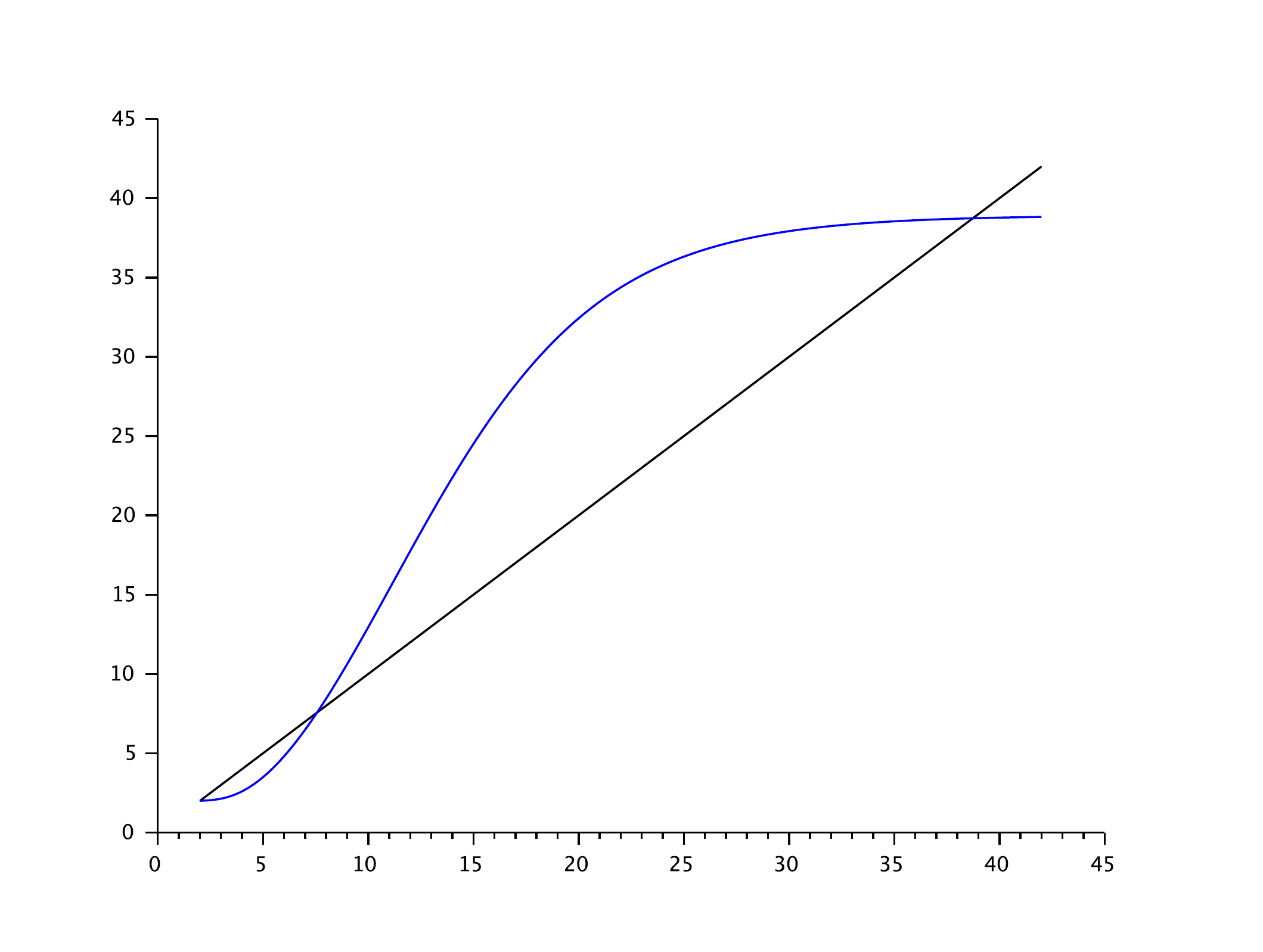}\label{subfig:Psi2_p1}}
\quad\subfloat[Trajectories of the mean-field $ (\lambda^{ A}, \lambda^{ B})$ and their microscopic counterparts $( \lambda_{ N}^{ A}, \lambda_{ N}^{ B})$.]{\includegraphics[width=0.5\textwidth]{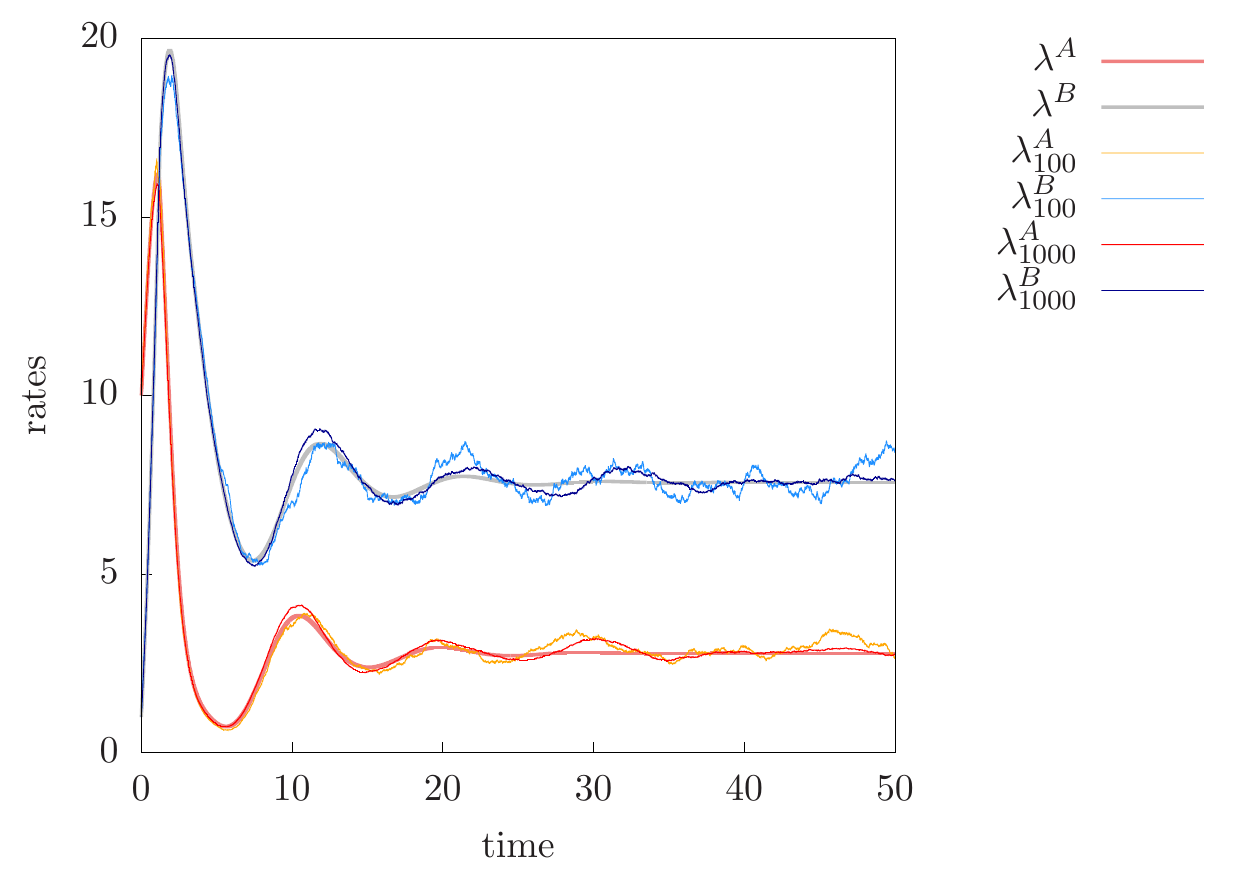}\label{subfig:lambdaA_exp_p1}}
\caption{The case of Model \eqref{eq:example_PhiBtoA_exp} with $h_{ i}= \mathbf{ 1}_{ [0, \theta_{ i}]}$ and $ \alpha=0.8$, $ \kappa_{ 1}=0.95$, $ \kappa_{ 2}= 1$, $ \kappa_{ 3}=0.5$, $ \kappa_{ 4}=1$, $ \mu_{ A}=10$, $ \mu_{ B}=1$, $ \tau=0.2$, on $[0, T]$ with $T=50$. Note that $ \kappa_{ 1}<1$ so that \eqref{eq:uB_in_I} is satisfied. The intensities converge (Figure~\ref{subfig:lambdaA_exp_p1}) whereas Assumption~\ref{ass:U} does not hold (Figure~\ref{subfig:Psi2_p1}): Assumption~\ref{ass:U} is sufficient for convergence, not necessary.}%
\label{fig:lambdaA_exp_p1}%
\end{figure}

\section{Towards oscillations: inhibition through a sigmoidal kernel}
\label{sec:oscillations}
Consider again the Model \eqref{eq:lambdaAB_lin} with full connectivity $ \kappa_{ 2} \kappa_{ 4}>0$. In most of the examples considered in Section~\ref{sec:examples}, $ \Phi_{ B\to A}$ (and hence $ \Phi$ itself) is convex. As already said, although we do not have a rigorous proof, we do not expect anything else but convergence of the intensities $( \lambda^{ A}, \lambda^{ B})$ in the convex case (even for large values of $ \kappa_{ 1}$ where \eqref{hyp:A_sub_muB_kappa3} no longer holds or in cases when Assumption~\ref{ass:U} is no longer valid, recall Figure~\ref{fig:lambdaA_exp_p1}). 

The case where $ \Phi_{ B\to A}$ is of sigmoid type (hence no longer convex) is more intriguing. In the numerical examples below, the following two examples will be considered: for fixed $R>0$ and $ \beta>0$ a possibly large parameter,
\begin{align}
\Phi_{ B\to A}(x)&=  \frac{ 1}{ 1+ \left( \frac{ x}{ R}\right)^{ \beta}},\label{eq:sigmoid_pol}\\
\Phi_{ B\to A}(x)&=  \frac{ 1}{ 2}-\frac{ 1}{ \pi}\arctan \left( \beta \left(x-R\right)\right).\label{eq:sigmoid_atan}
\end{align}
The kernel \eqref{eq:sigmoid_pol} is the same example as \eqref{eq:PhiBtoA_pol} (take $ \tau= 1/{ R^{ \beta}}$). Both are smooth approximations as $ \beta\to\infty$ of the indicator function
\begin{equation}
\label{eq:PhiBA_ind}
\Phi_{ B\to A} (x) = \mathbf{ 1}_{ [0, R]}(x).
\end{equation}
However, There is  every reasons to believe that the main features developed in this section should not depend specifically on the choice of the sigmoid kernel, as long as it is sufficiently close to \eqref{eq:PhiBA_ind}. In this paragraph we illustrate, both with non-rigorous intuition and numerical examples that the lack of convexity of $ \Phi_{ B\to A}$ is likely to induce oscillations for the solution $(\lambda^{ A}, \lambda^{ B})$ to \eqref{eq:lambdaAB_lin} (recall Remark~\ref{rem:effect_inhib}).  The existence of limit cycles for \eqref{eq:lambdaAB_lin} is only made explicit based on numerical simulations. A rigorous proof is lacking and will be the object of a future work.

\subsection{The case of an indicator}
\label{sec:phase_transition_intuition}
In order to give some intuition about the possible emergence of limit cycles for \eqref{eq:lambdaAB_lin}, replace for the moment the kernels $ \Phi_{ B\to A}$ in \eqref{eq:sigmoid_pol} or \eqref{eq:sigmoid_atan} by their common limit \eqref{eq:PhiBA_ind} as $ \beta\to \infty$. Obviously, none of the rigorous results developed in the previous sections should apply (at least easily) to the case \eqref{eq:PhiBA_ind}, as this kernel is not even continuous. Even the well-posedness of the particle system \eqref{eq:Hawkes} or its mean-field limit \eqref{eq:barZ_gen} is unclear in this case, not to mention the analysis made in Section~\ref{sec:MFL} to derive the limit of the mean-field system as $t\to\infty$. This being said, suppose anyway that one would be able to consider \eqref{eq:PhiBA_ind} as a proper candidate for the inhibition kernel $ \Phi_{ B\to A}$. We place ourselves in the framework where $ \mu_{ A}$ is large and $ \mu_{ B}$ small (possibly even $0$). In 'real life', our $\mu_{ A}$ and $\mu_{ B}$ account for the following two features: i) the weaker one, the intrinsic properties of the neurons (that make them more or less prone to spike spontaneously) \cite{luo:15}; ii) the stronger one, the excitatory inputs from the other brain regions (other neocortical area and \emph{mostly} thalamic inputs \cite{white:89,braitenberg.schuz:98}). There is strong anatomical evidence that the latter are stronger onto excitatory neurons than onto inhibitory ones (reviewed in \cite{white:89}), justifying the relevance  of considering $\mu_{ A} \gg \mu_{ B}$.

The intuition for the emergence of oscillations is the following: considering \eqref{eq:lambdaAB_lin} at $t\approx 0$, as  $ \mu_{ A} \gg \mu_{ B}\approx 0$, the intensity of population $A$ (resp. $B$) should be high (resp. small). Then, in presence of feedback from $A$ onto $B$ ($ \kappa_{ 4}>0$), the spiking activity of population $A$ propagates to population $B$ so that the intensity of $B$ increases. But the higher the activity of $B$ is, the larger $X_{ B}(t):=(1- \alpha) \int_{ 0}^{t} h_{ 2}(t-u)  \lambda_{ u}^{B} {\rm d}u$ (that is the term within the inhibition kernel $ \Phi_{ B\to A}(\cdot)$ in the first equation of \eqref{eq:lambdaAB_lin}) becomes. Once $X_{ B}(t)$ has reached the threshold $R>0$ of \eqref{eq:PhiBA_ind}, the activity of $A$ gets killed (recall that $ \kappa_{ 2}>0$). Hence, supposing for instance that the memory kernel $h_{ 4}$ has compact support with length $ \theta_{ 4}$, after a time $ \theta_{ 4}$, population $B$ will no longer feel the influence of population $A$: intensity of $B$ will go back to $ \mu_{ B}\approx 0$. Thus, $X_{ B}(t)$ crosses back the threshold $R$: population $A$ goes back to its normal high activity, and so on: oscillations should appear.

For this to happen, the threshold $R$ should not be too large, otherwise the term $X_{ B}(t)$ will not be able to reach $R$. We can actually pursue the intuition given by \eqref{eq:PhiBA_ind} in deriving informally a phase transition. Suppose again that the formalism developed in Section~\ref{sec:fully_coupled} is valid for \eqref{eq:PhiBA_ind} (which is not). Note first that the subcriticality condition \eqref{hyp:A_sub_muB_kappa3} simply boils down here to
\begin{equation}
\kappa_{ 1}<1
\end{equation}
that is population $A$ alone is subcritical. The fixed-point function $ \Phi$ becomes
\begin{equation}
\Phi(x)= \frac{ a}{ \kappa_{ 2}} +  \frac{ \mu_{ A} b}{ \kappa_{ 2}(1- \kappa_{ 1})}\mathbf{ 1}_{ [0, R]}(\kappa_{ 2}x).
\end{equation}
Recall that the whole point of Section~\ref{sec:fully_coupled} is to compute the fixed-point of $ \Phi$. Straightforward computations enable to distinguish two cases:
\begin{itemize}
\item Either $ \frac{ R-a}{ b}\geq \frac{ \mu_{ A}}{ 1- \kappa_{ 1}}$: Then $ \Phi$ has a unique fixed-point $\ell=  \frac{ a}{ \kappa_{ 2}} + \frac{ \mu_{ A} b}{ \kappa_{ 2}(1- \kappa_{ 1})}$ (which should be the limit of $ \lambda^{ B}$ if the results of Section~\ref{sec:fully_coupled} would apply to \eqref{eq:PhiBA_ind}). In this case, the limit of $ \lambda^{ A}$ should be $\ell_{ A}= \Psi_{ 1}(\ell)= \frac{ \mu_{ A} \mathbf{ 1}_{ [0, R]}( \kappa_{ 2}\ell)}{ 1- \kappa_{ 1} \mathbf{ 1}_{ [0, R]}(\kappa_{ 2}\ell)}= \frac{ \mu_{ A}}{ 1- \kappa_{ 1}}$.
\item Either $ \frac{ R-a}{ b}< \frac{ \mu_{ A}}{ 1- \kappa_{ 1}}$: in this case, there is no fixed-point for $ \Phi$. But the main conclusion of Theorem~\ref{th:full_conv} that any possible limit for $ \lambda^{ B}$ is necessarily given by $\ell$ fixed-point of $ \Phi$. Hence, if one believes in the conclusion of Theorem~\ref{th:full_conv} for \eqref{eq:PhiBA_ind}, we would necessarily have $ \underline{ \ell}_{ B}< \bar \ell_{ B}$: this is the regime where one expects oscillations.
\end{itemize}
Of course, once again, this phase transition is only formal, as \eqref{eq:PhiBA_ind} does enter into the hypotheses used in the paper. What is more, this formal argument is not stable by perturbation: if one replaces \eqref{eq:PhiBA_ind} by any continuous sigmoid function with the same threshold $R$, Theorem~\ref{th:full_conv} does apply in this case and $ \Phi$ has always a fixed-point and the formal argument above becomes meaningless. It appears nonetheless that this formal phase transition is actually meaningful and visible in simulations in the sigmoid case, see Section~\ref{sec:phase_transition_num}.

\subsection{ The case of a sigmoid kernel}
\label{sec:case_sigmoid}
Having in mind the intuition provided by the previous paragraph, let us go back to the case where $ \Phi_{ B\to A}$ is defined as a smooth Lipschitz sigmoid function, such as in \eqref{eq:sigmoid_pol} of \eqref{eq:sigmoid_atan}. Here, every results of the present paper apply. In view of Theorem~\ref{th:full_conv}, there are only two ways to capture oscillations: 
\begin{enumerate}
\item either to ensure that \eqref{eq:uB_in_I} no longer holds, i.e. situations where $ \kappa_{ 1}\gg 1$ so that (with the notations of the previous sections, recall \eqref{eq:I_kappas}) 
\begin{equation}
\label{eq:oscill_ellB}
\underline{ \ell}_{ B} < x_{ \kappa_{ 1}, \kappa_{ 2}}^{ \ast} < \bar \ell_{ B}.
\end{equation} 
This corresponds to a situation where population $A$, in isolation from $B$ would be highly supercritical (but still subcritical with inhibition and retroaction, see Theorem~\ref{th:A_subcritical}). The technical difficulty is that we lack some a priori upper bounds on $ \underline{ \ell}_{ B}$ in order to give some rigorous sufficient conditions for \eqref{eq:oscill_ellB}. We give nonetheless a numerical illustration of this case in Figure~\ref{fig:lA_atan_k1_5}.
\begin{figure}[ht]
\centering
\includegraphics[width=\textwidth]{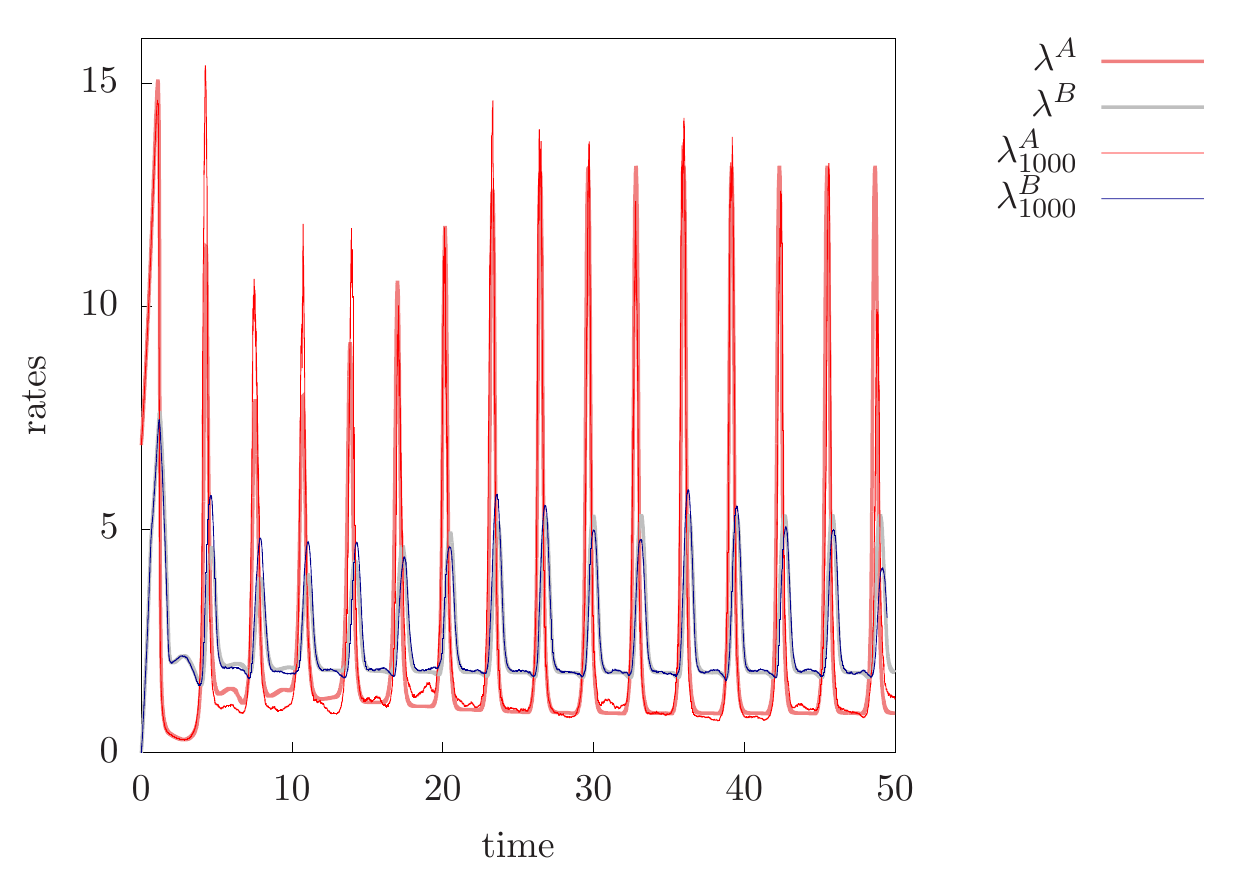}
\caption{Trajectories of the mean-field $ (\lambda^{ A}, \lambda^{ B})$ and their microscopic counterparts $( \lambda_{ N}^{ A}, \lambda_{ N}^{ B})$, when $ \Phi_{ B\to A}$ is given by \eqref{eq:sigmoid_atan}, with $R=1$, $ \beta=20$, $h_{ i}= \mathbf{ 1}_{ [0, \theta_{ i}]}$, $ \alpha=0.8$, $ \kappa_{ 1}=5$, $\kappa_{ 2}= \kappa_{ 3}= \kappa_{ 4}=0.5$, $ \mu_{ A}=7$ and $ \mu_{ B}=0$, on $[0, T]$ with $T=50$. Note that $\kappa_{ 1} \Phi_{ B\to A} \left(\kappa_{ 2} \underline{ \ell}_{ B}\right)\approx 4.81>1$ so that \eqref{eq:uB_in_I} does not hold: we have oscillations.}
\label{fig:lA_atan_k1_5}
\end{figure}
\item or either in situations where \eqref{eq:uB_in_I} is valid (e.g. by supposing \eqref{hyp:A_sub_muB_kappa3}; note that in simulations below, we take $ \kappa_{ 1}<1$ so that \eqref{eq:uB_in_I} is verified immediately), but where Assumption~\ref{ass:U} no longer holds (recall that Assumption~\ref{ass:U} is sufficient for convergence, not necessary, but oscillations necessarily occur when Assumption~\ref{ass:U} is not true). The difficulty (already met in the proof of Proposition~\ref{prop:conv_pol}) is that checking Assumption~\ref{ass:U} requires nontrivial algebraic estimates that may be, in general, out of reach from any rigorous treatment. Therefore, we only check Assumption~\ref{ass:U} numerically here. 
\end{enumerate}

\subsubsection{Phase transition and emergence of oscillations}
\label{sec:phase_transition_num}
The first observation of this paragraph is that the phase transition derived formally in the previous section is (numerically) accurate: take $ h_{ i}= \mathbf{ 1}_{ [0, \theta_{ i}]}$, $ \theta_{ i}>0$, $i=1, \ldots, 4$, $ \alpha=0.8$ and for simplicity $ \kappa_{ i}=0.5$ for $i=1, \ldots, 4$ and $ \mu_{ B}=0$. Consider $ \beta$ in \eqref{eq:sigmoid_pol} (or in \eqref{eq:sigmoid_atan}) sufficiently large so that $ \Phi_{ B\to A}$ is close enough to the indicator function \eqref{eq:PhiBA_ind} (in pratice, interesting dynamical features already appears for $ \beta\approx 10$). With these parameters, the phase transition described in Section~\ref{sec:phase_transition_intuition} simply boils down to $ R\geq \mu_{ A}$ or $R< \mu_{ A}$. The corresponding simulations are displayed in Figures~\ref{fig:muA099} and \ref{fig:muA101} below, made for the choice of \eqref{eq:sigmoid_pol} with $ \beta=1000$ and $R=1$: for $ \mu_{ A}=0.99$, observe numerically in Figure~\ref{fig:muA099} that $ \mathcal{ U}_{ \Phi}$ is trivial (Assumption~\ref{ass:U} is true) and therefore Theorem~\ref{th:full_conv} applies: we have convergence of $(\lambda^{ A}, \lambda^{ B})$. On the contrary, for $ \mu_{ A}= 1.01$, $ \mathcal{ U}_{ \Phi}$ is no longer reduced to $(\ell, \ell)$ (Assumption~\ref{ass:U} is not valid) and oscillations appear (Figure~\ref{fig:muA101}). These oscillations are all the more effective when $ \mu_{ A}$ gets larger, see Figure~\ref{fig:lA_3_sig}. 
\begin{figure}[h]
\centering
\subfloat[Verifying the hypotheses of Theorem~\ref{th:full_conv}: representation of $ \Phi$ (in blue) and $ \Phi^{ -1}$ (in red) on the interval $ \left(0, \ell\right)$. We see here that $ \Phi< \Phi^{ -1}$ so that, by \eqref{eq:def_UPhi3}, $ \mathcal{ U}_{ \Phi}$ is trivial so that Assumption~\ref{ass:U} is true. Here $ \kappa_{ 1}<1$ and \eqref{eq:uB_in_I} is trivial.]{\includegraphics[width=0.45\textwidth]{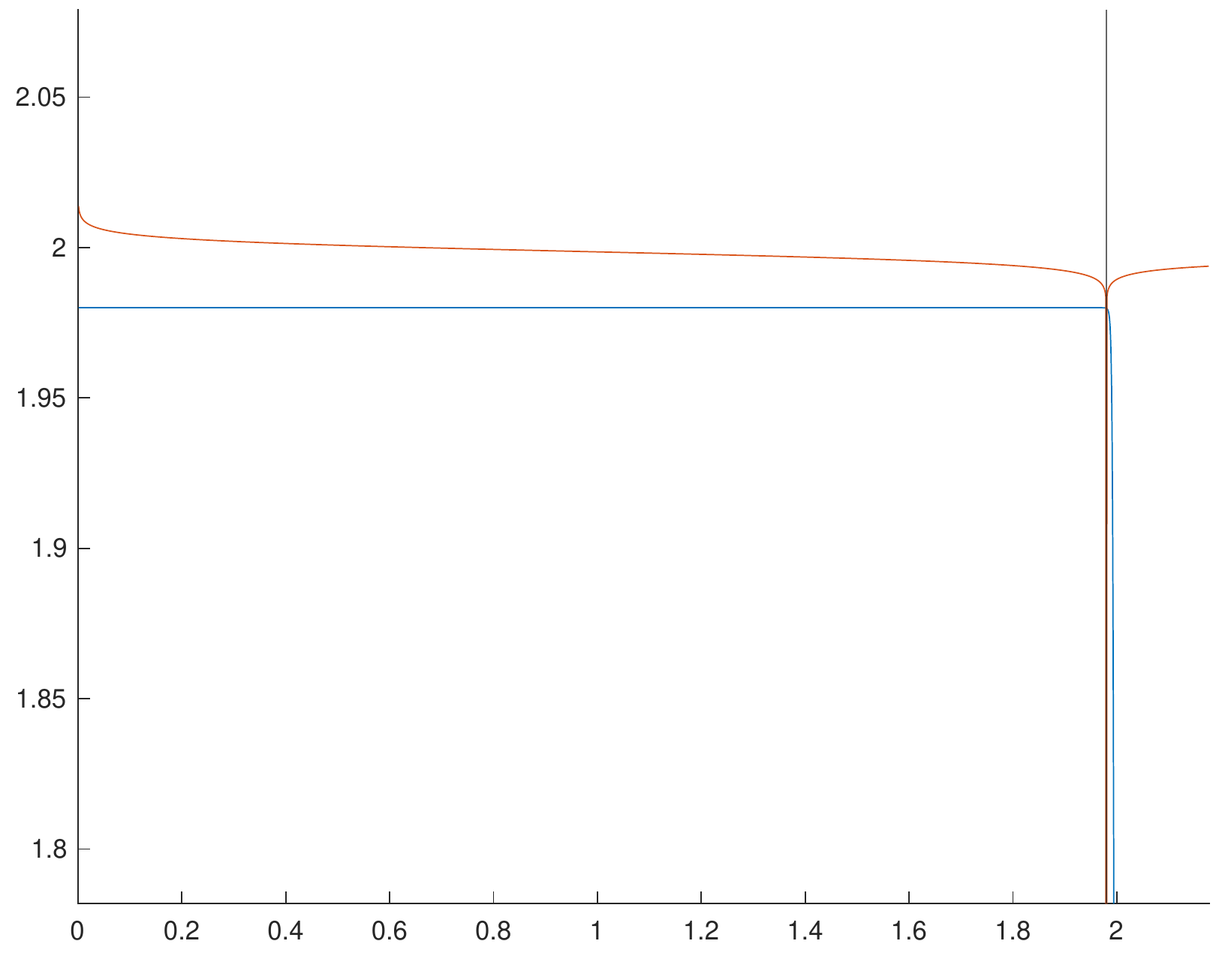}
\label{subfig:UPhi099}}
\quad\subfloat[Trajectories of the mean-field $ (\lambda^{ A}, \lambda^{ B})$ and their microscopic counterparts $( \lambda_{ N}^{ A}, \lambda_{ N}^{ B})$.]{\includegraphics[width=0.5\textwidth]{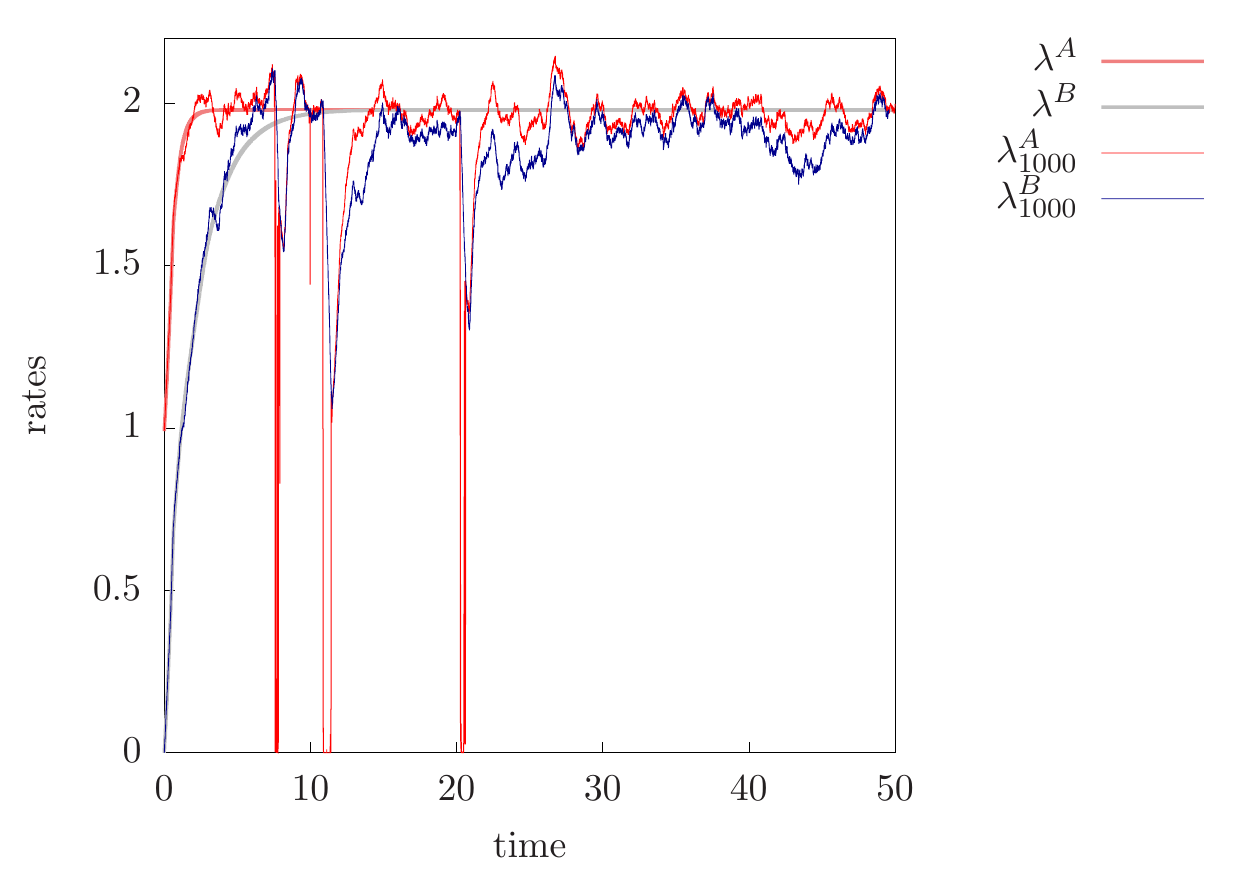}
\label{subfig:traj099}}
\caption{The case $ \Phi_{ B\to A}$ given by \eqref{eq:sigmoid_pol}, with $R=1$, $ \beta=1000$, $h_{ i}= \mathbf{ 1}_{ [0, \theta_{ i}]}$, $ \alpha=0.8$, $ \kappa_{ 1}= \kappa_{ 2}= \kappa_{ 3}= \kappa_{ 4}=0.5$ and $ \mu_{ B}=0$. Here we take $ \mu_{ A}=0.99$.  The conclusions of Theorem~\ref{th:full_conv} hold in this case (Figure~\ref{subfig:UPhi099}) and we have convergence of the intensities as $t\to\infty$ (Figure~\ref{subfig:traj099}).}%
\label{fig:muA099}%
\end{figure}
\begin{figure}[h]
\centering
\subfloat[Representation of $ \Phi$ (in blue) and $ \Phi^{ -1}$ (in red) on the interval $ \left(0, \ell\right)$. We see here that $ \mathcal{ U}_{ \Phi}$ is nontrivial (recall \eqref{eq:def_UPhi2}): Assumption~\ref{ass:U} no longer holds.]{\includegraphics[width=0.45\textwidth]{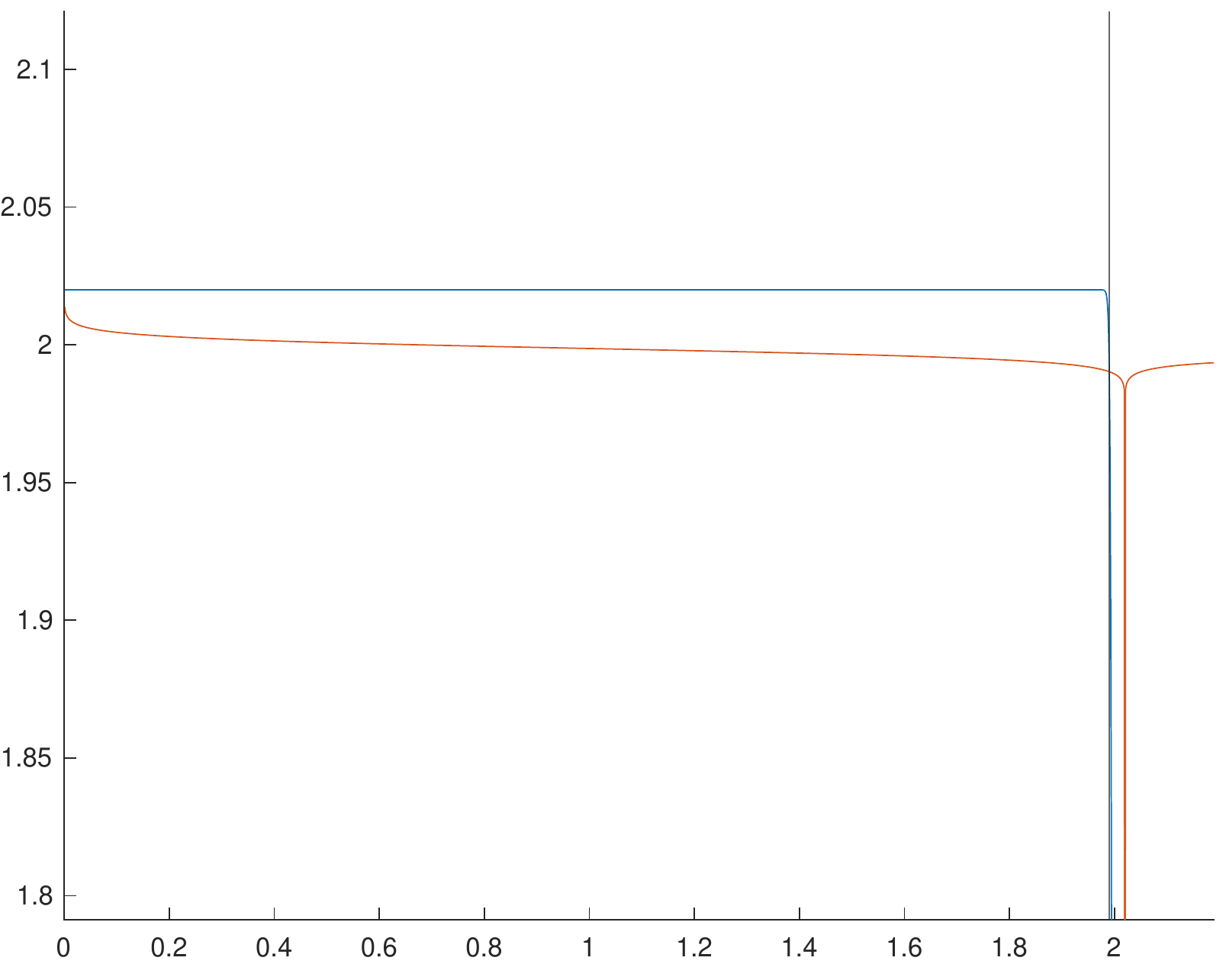}
\label{subfig:UPhi101}}
\quad\subfloat[Trajectories of the mean-field $ (\lambda^{ A}, \lambda^{ B})$ and their microscopic counterparts $( \lambda_{ N}^{ A}, \lambda_{ N}^{ B})$.]{\includegraphics[width=0.5\textwidth]{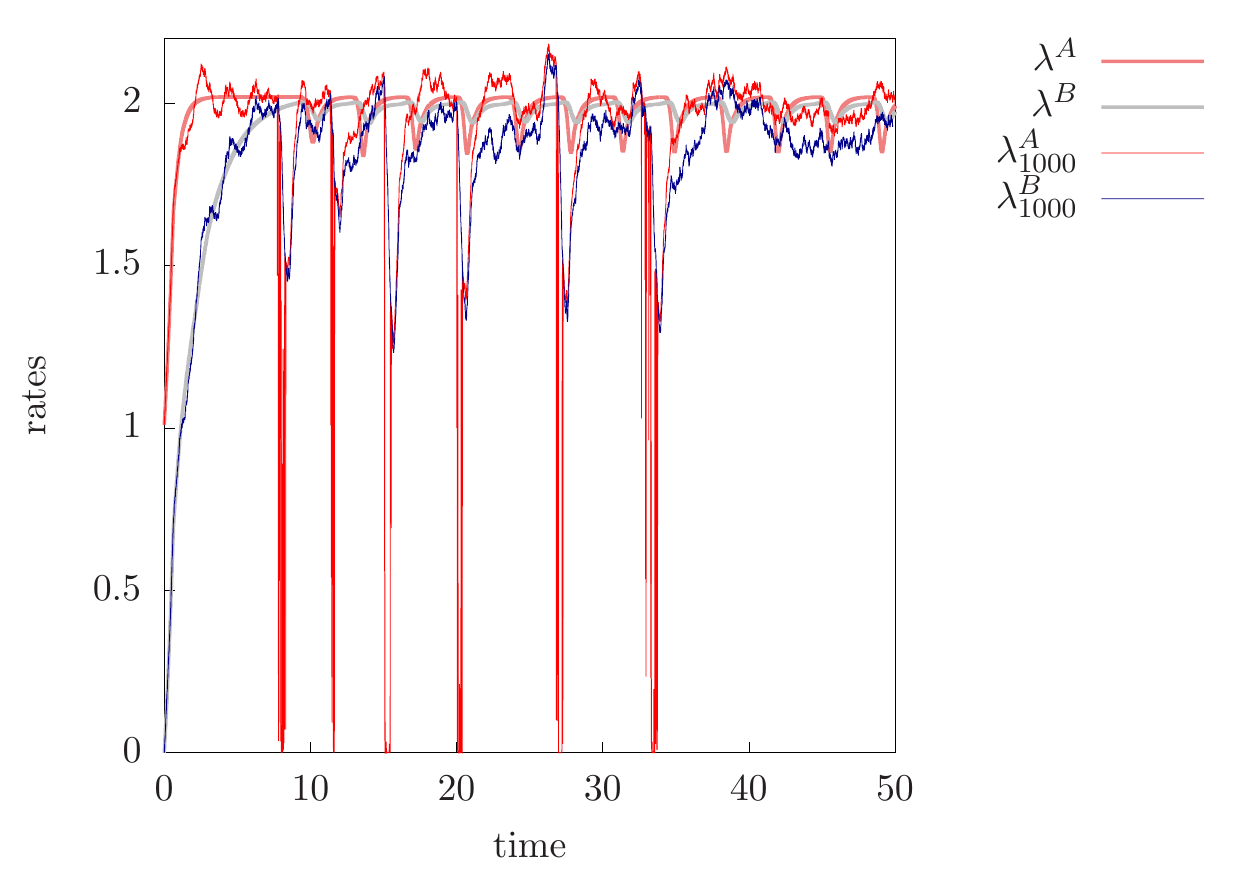}
\label{subfig:traj101}}
\caption{The case $ \Phi_{ B\to A}$ given by \eqref{eq:sigmoid_pol}, with $R=1$, $ \beta=1000$, $h_{ i}= \mathbf{ 1}_{ [0, \theta_{ i}]}$, $ \alpha=0.8$, $ \kappa_{ 1}= \kappa_{ 2}= \kappa_{ 3}= \kappa_{ 4}=0.5$ and $ \mu_{ B}=0$. Here we take $ \mu_{ A}=1.01$. Assumption~\ref{ass:U} is no longer valid (Figure~\ref{subfig:UPhi101}) and we observe oscillations (Figure~\ref{subfig:traj101}).}%
\label{fig:muA101}%
\end{figure}
\begin{figure}[ht]
\centering
\includegraphics[width=\textwidth]{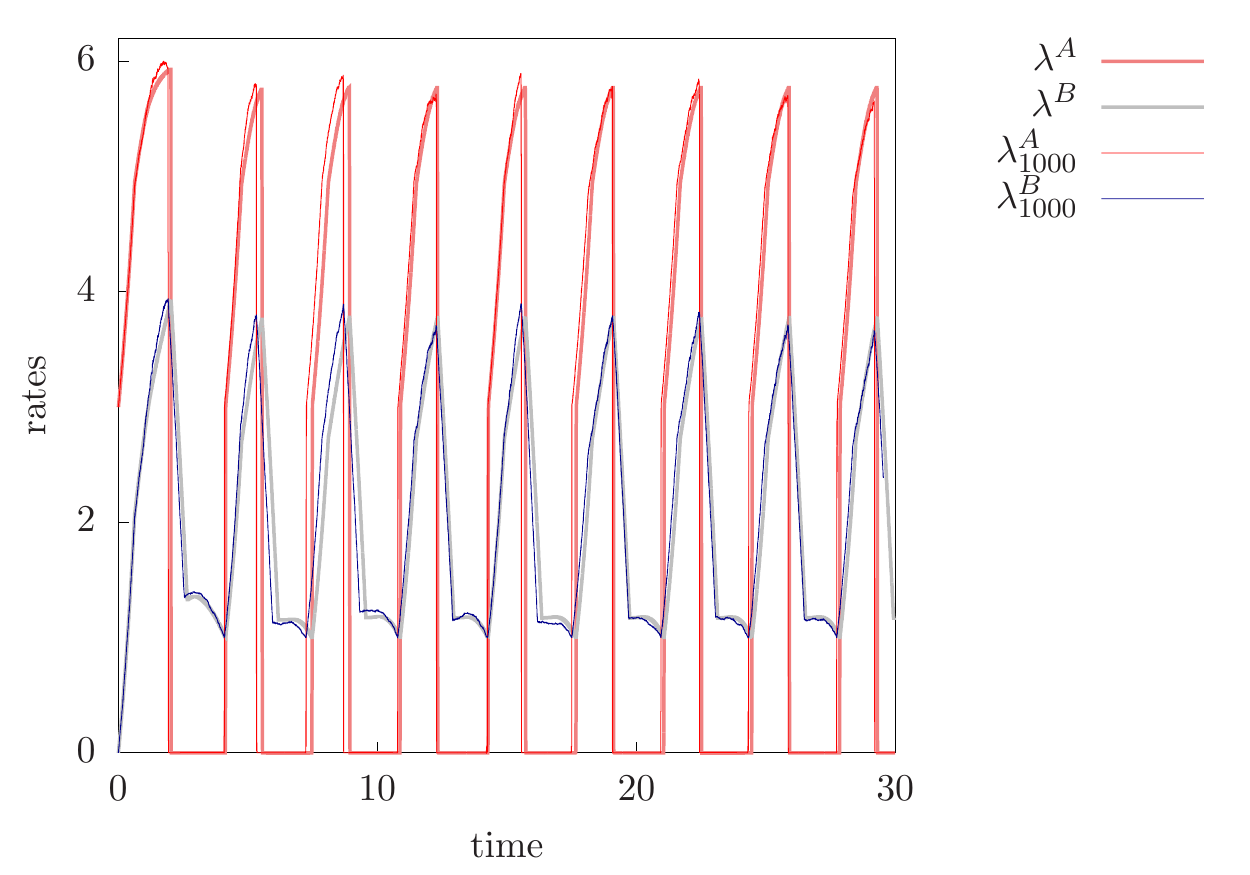}
\caption{Trajectories of the mean-field $ (\lambda^{ A}, \lambda^{ B})$ and their microscopic counterparts $( \lambda_{ N}^{ A}, \lambda_{ N}^{ B})$,  when $ \Phi_{ B\to A}$ is given by \eqref{eq:sigmoid_pol}, with $R=1$, $ \beta=1000$, $h_{ i}= \mathbf{ 1}_{ [0, \theta_{ i}]}$, $ \alpha=0.8$, $ \kappa_{ 1}= \kappa_{ 2}= \kappa_{ 3}= \kappa_{ 4}=0.5$, $ \mu_{ A}=3$ and $ \mu_{ B}=0$, on $[0, T]$ with $T=30$.}
\label{fig:lA_3_sig}
\end{figure}
Note that this phase transition does not intrinsically depend on the specific choice of the sigmoid kernel $ \Phi_{ B\to A}$. Replacing \eqref{eq:sigmoid_pol} by \eqref{eq:sigmoid_atan} leads to similar patterns, even for smaller values of $ \beta$, see Figure~\ref{fig:lA_atan_k1petit}. However the precision of the phase transition around $ \mu_{ A}^{ c}:=R$ degrades as $ \beta$ is not too large.

\subsubsection{The role of memory kernels in oscillations}
\label{sec:role_memory}
\begin{figure}[!h]
\centering
\includegraphics[width=\textwidth]{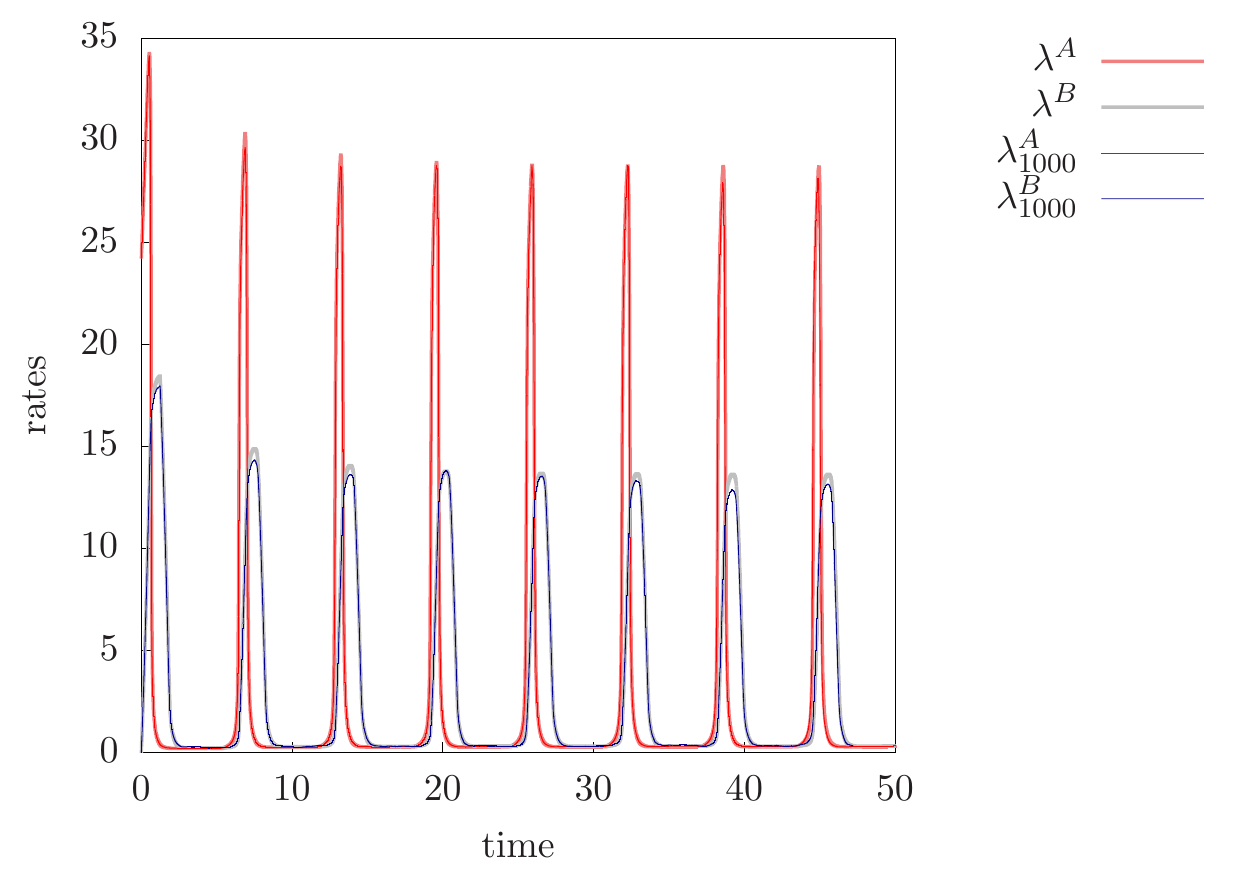}
\caption{Trajectories of the mean-field $ (\lambda^{ A}, \lambda^{ B})$ and their microscopic counterparts $( \lambda_{ N}^{ A}, \lambda_{ N}^{ B})$,  when $ \Phi_{ B\to A}$ is given by \eqref{eq:sigmoid_atan}, with $R=1$, $ \beta=10$, $h_{ i}= \mathbf{ 1}_{ [0, \theta_{ i}]}$, $ \alpha=0.8$, $ \kappa_{ 1}=0.5$, $\kappa_{ 2}= 1$, $\kappa_{ 3}=0.1$, $\kappa_{ 4}=1.0$, $ \mu_{ A}=25$ and $ \mu_{ B}=0$, on $[0, T]$ with $T=50$.}
\label{fig:lA_atan_k1petit}
\end{figure}

The point we want to raise here is the importance of the choice of the memory kernels $h_{ i}$ in the emergence of oscillations. Previous simulations were made 
 with compactly supported $ h_{ i}= \mathbf{ 1}_{ [0, \theta_{ i}]}$. Several other choices for the $h_{ i}$ can be made, such as e.g. exponential $h_{ i}(u)= e^{ - \frac{ u}{ \theta_{ i}}}$ or Erlang kernels $h_{ i}(u)= e^{ - \frac{ u}{ \theta_{ i}}} \frac{ u^{ n}}{ n!}$ \cite{Duarte:2016aa}. This last choice of exponential kernels has the nice ability to transform the convolution equation governing the mean-field intensity $ \lambda(t)$ of a single population of Hawkes processes into an ODE governing the mean activity $X(t)= \int_{ 0}^{t} e^{ - \frac{ t-s}{ \theta}} \lambda(s) {\rm d}s$ (see e.g. \cite{Duarte:2016aa} for further details). The same procedure can be applied here: set $ h_{ i}(u)= e^{ - \frac{ u}{ \theta_{ i}}}$, $i=1, \ldots, 4$ and write $X_{1}(t):= \alpha \int_{ 0}^{t} h_{1}(t-s) \lambda^{A}(s) {\rm d}s$, $X_{2}(t):= (1-\alpha) \int_{ 0}^{t} h_{2}(t-s) \lambda^{B}(s) {\rm d}s$, $X_{3}(t):= (1-\alpha) \int_{ 0}^{t} h_{3}(t-s) \lambda^{B}(s) {\rm d}s$, $X_{4}(t):= \alpha \int_{ 0}^{t} h_{4}(t-s) \lambda^{A}(s) {\rm d}s$, one easily sees that
the vector $(X_{1}, X_{2}, X_{3}, X_{4})$ solves the system of coupled ODEs
\begin{equation}
\label{eq:ode_X}
\begin{cases}
\frac{ {\rm d}}{ {\rm d}t} X_{1}(t) &= - \frac{ 1}{ \theta_{1}} X_{1}(t) + \alpha \left( \mu_{A} + X_{1}(t)\right) \Phi_{B\to A} \left(X_{2}(t)\right),\\
\frac{ {\rm d}}{ {\rm d}t} X_{2}(t)&= - \frac{ 1}{ \theta_{2}} X_{2}(t) + (1- \alpha) \left(\mu_{B} + X_{3}(t) + X_{4}(t)\right),\\
\frac{ {\rm d}}{ {\rm d}t} X_{3}(t)&= - \frac{ 1}{ \theta_{3}} X_{3}(t) + (1- \alpha) \left(\mu_{B} + X_{3}(t) + X_{4}(t)\right),\\
\frac{ {\rm d}}{ {\rm d}t} X_{4}(t) &= - \frac{ 1}{ \theta_{4}} X_{4}(t) + \alpha \left( \mu_{A} + X_{1}(t)\right) \Phi_{B\to A} \left(X_{2}(t)\right).
\end{cases}
\end{equation}
Nonetheless, no oscillations have been numerically observed for \eqref{eq:ode_X}: for the same set of parameters, limit cycles that are present in the compact case $h_{ i}= \mathbf{ 1}_{ [0, \theta_{ i}]}$ disappear with exponential kernels, replaced by damped oscillations. To be more precise, oscillations disappear whenever both $h_{ 2}$ and $h_{ 4}$ are exponential. We interpret this situation as follows: for the intuition described in Section~\ref{sec:phase_transition_intuition} to work, it is necessary for population $B$ that it forgets rapidly the influence of population $A$ or that inhibition of $B$ onto $A$ has finite memory, that is one among $h_{ 2}$ or $h_{4}$ has compact support. Concerning $h_{ 4}$, the influence of population $A$ onto $B$ is made through $X_{4}= \alpha\int_{ 0}^{t} h_{4}(t-s) \lambda^{A}(s) {\rm d}s$ which consists in integrating on the whole history of population $A$. Therefore, if $h_{ 4}$ has infinite support, $X_{4}$ conserves the influence of nontrivial activity of $A$ for arbitrary long times and even if the activity of $A$ may sometimes vanish, it may not be sufficient for a significant decrease in the feedback from $A$ to $B$. The same argument applies for $h_{2}$: if $h_{ 2}$ has long memory, $X_{2}$ keeps the trace of periods when activity of $B$ was significant,  therefore $X_{2}$ cannot go back to its initial state where $ \mu_{B}\approx0$.

\section{Proofs of well-posedness results and propagation of chaos}
\label{sec:proofs_part_MF}
We prove here the results of Section~\ref{sec:modgen}.
\subsection{Well-posedness of the particle system}
\label{sec:proof_WP_part}
We prove Proposition~\ref{prop:ExistUniq}. Similarly to the  proof of Proposition~2.1 \cite{costa2020renewal},  we proceed by an inductive thinning procedure  on the Poisson point processes $ \left( \left(\pi_{ i}\right)_{ i=1, \ldots, N_{ A}}, \left(\tilde{ \pi}_{ i}\right)_{ i=N_{ A}+1, \ldots, N}\right)$.
\paragraph{Initialization} Define $ \Lambda_{ 0}^{ A}:= F(0,0)$ and $ \Lambda_{ 0}^{ B}:= G(0,0)$ and 
\begin{align*}
U_{ 1}^{ (i)}&:= \inf \left\lbrace u>0,\ \int_{ (0, u]} \int_{(0, \Lambda_{ 0}^{ A}]} \pi_{ i}({\rm d}s, {\rm d}z)>0\right\rbrace,\ i=1, \ldots, N_{ A},\\
U_{ 1}^{ (i)}&:= \inf \left\lbrace u>0,\ \int_{ (0, u]} \int_{(0, \Lambda_{ 0}^{ B}]} \tilde{ \pi}_{ i}({\rm d}s, {\rm d}z)>0\right\rbrace,\ i=N_{ A}+1, \ldots, N
\end{align*}
as well as
\begin{equation}
\label{eq:U1}
U_{ 1} := \min(U_{ 1}^{ (i)}, i=1, \ldots, N).
\end{equation}
Each $U_{ 1}^{ (i)}$ for $i=1, \ldots, N_{ A}$ (resp. for $i=N_{ A}+1, \ldots, N$), $U_{ 1}^{ (i)}$ is the first atom of an homogeneous Poisson process with parameter $ \Lambda_{ 0}^{ A}$ (resp. $ \Lambda_{ 0}^{ B}$). Hence, almost surely, $U_{ 1}^{ (i)}$ and thus $U_{ 1}$ is strictly positive and finite. Moreover, by independence of the Poisson point processes $ \left( \left(\pi_{ i}\right)_{ i=1, \ldots, N_{ A}}, \left(\tilde{ \pi}_{ i}\right)_{ i=N_{ A}+1, \ldots, N}\right)$, the variables $U_{ 1}^{ (i)}, i=1, \ldots, N$ are almost surely distinct so that 
\begin{equation}
\label{eq:I1}
I_{ 1}:= \left\lbrace i\in \left\lbrace1, \ldots, N\right\rbrace, U_{ 1}^{ (i)}= U_{ 1}\right\rbrace
\end{equation}
is defined without ambiguity. Since any solution to \eqref{eq:Hawkes} has constant intensities $F(0,0)$ and $G(0,0)$ until the first atom, the above construction yields that $U_{ 1}$ given by \eqref{eq:U1} is necessarily the first atom of $(Z_{ t}^{i})_{ i=1, \ldots, N}$ and that it concerns the process $Z^{i}$ for $i=I_{ 1}$.
\paragraph{Recursion} Suppose we have constructed $0< U_{ 1}< \ldots< U_{ k}$ and $(I_{ 1}, \ldots, I_{ k})\in \left\lbrace1, \ldots, N\right\rbrace^{ k}$ such that on the event $ \left\lbrace U_{ k}<\infty\right\rbrace$, $0< U_{ 1}< \ldots< U_{ k}$ are the $k$ first atoms of the aggregated process 
\begin{equation}
\label{eq:Z}
Z= \sum_{ i=1}^{ N} Z^{i}
\end{equation} and for all $j=1, \ldots, k$, $I_{ j}$ is the index in $ \left\lbrace1, \ldots, N\right\rbrace$ responsible for the $j$th atom. Let 
\begin{equation*}
Z_{ A}^{ (k)}:= \sum_{\substack{ j=1, \ldots, k,\\ I_{ j}\in \left\lbrace1, \ldots, N_{ A}\right\rbrace}} \delta_{ U_{ j}}\ \text{ and }Z_{ B}^{ (k)}:= \sum_{\substack{ j=1, \ldots, k,\\ I_{ j}\in \left\lbrace N_{ A}+1, \ldots, N\right\rbrace}} \delta_{ U_{ j}}
\end{equation*}
be the point measures corresponding to atoms which concern population $A$ and $B$ respectively. Define
\begin{align*}
\Lambda_{ k}^{ A}(t)&:= F \left(\frac{ 1}{ N} \int_{ (0, U_{ k}]} h_{ 1}(t-s) {\rm d}Z_{ A, s}^{ (k)}, \frac{ 1}{ N}\int_{ (0, U_{ k}]} h_{ 2}(t-s) {\rm d}Z_{ B, s}^{ (k)}\right), \label{eq:Lambda_Ak}\\
\Lambda_{ k}^{ B}(t)&:= G \left(\frac{ 1}{ N} \int_{ (0, U_{ k}]} h_{ 3}(t-s) {\rm d}Z_{ B, s}^{ (k)}, \frac{ 1}{ N}\int_{ (0, U_{ k}]} h_{ 4}(t-s) {\rm d}Z_{ A, s}^{ (k)}\right)
\end{align*}
as well as
\begin{align*}
U_{ k+1}^{ (i)}&:= \inf \left\lbrace u>0,\ \int_{ (U_{ k}, u]} \int_{(0, \Lambda_{ k}^{ A}(s)]} \pi_{ i}({\rm d}s, {\rm d}z)>0\right\rbrace,\ i=1, \ldots, N_{ A},\\
U_{ k+1}^{ (i)}&:= \inf \left\lbrace u>0,\ \int_{ (U_{ k}, u]} \int_{(0, \Lambda_{ k}^{ B}(s)]} \tilde{ \pi}_{ i}({\rm d}s, {\rm d}z)>0\right\rbrace,\ i=N_{ A}+1, \ldots, N
\end{align*}
Define finally
$U_{ k+1}:= \min \left(U_{ k+1}^{ (i)}, i=1, \ldots, N\right).$
For $ \varepsilon>0$, denote by
\begin{align*}
\mathcal{ R}_{ \varepsilon}^{ A} &:= \left\lbrace (s, z), s\in (U_{ k}, U_{ k}+ \varepsilon],\ z\in(0, \Lambda_{ k}^{ A}(s)]\right\rbrace,\\
\mathcal{ R}_{ \varepsilon}^{ B} &:= \left\lbrace (s, z), s\in (U_{ k}, U_{ k}+ \varepsilon],\ z\in(0, \Lambda_{ k}^{ B}(s)]\right\rbrace
\end{align*}
For all $i=1, \ldots, N_{ A}$, conditionally on $ \mathcal{ F}_{ U_{ k}}$, $ \pi_{ i}(\mathcal{ R}_{ \varepsilon}^{ A})$ follows a Poisson law with parameter $ \int_{ U_{ k}}^{U_{ k}+ \varepsilon} \Lambda_{ k}^{ A}(t) {\rm d}t$, so that
\[ \mathbf{ P} \left( \pi_{ i}(\mathcal{ R}_{ \varepsilon}^{ A})<\infty\right)= \mathbf{ E} \left( \mathbf{ P} \left( \int_{ U_{ k}}^{U_{ k}+ \varepsilon} \Lambda_{ k}^{ A}(t) {\rm d}t <\infty \vert \mathcal{ F}_{ U_{ k}}\right)\right).\]
Then, using \eqref{hyp:F_bound} and Fubini Theorem
\begin{align*}
\int_{ U_{ k}}^{U_{ k}+ \varepsilon} \Lambda_{ k}^{ A}(t) {\rm d}t &\leq c_{ 2} \varepsilon +  \frac{ c_{ 1}}{ N}  \left\lbrace\left\Vert h_{ 1} \right\Vert_{ 1}  + \left\Vert h_{ 2} \right\Vert_{ 1}  \right\rbrace k < \infty.
\end{align*}
Hence, almost surely for all $i=1, \ldots, N_{ A}$, $\pi_{ i}(\mathcal{ R}_{ \varepsilon}^{ A})<\infty$ and 
the same reasoning applies to $\pi_{ i}(\mathcal{ R}_{ \varepsilon}^{ B})$ for all $i=N_{ A}+1, \ldots, N$. This gives that almost surely for all $i=1, \ldots, N$, $U_{ k+1}^{ (i)}>U_{ k}$ and thus $U_{ k+1}>U_{ k}$. Again by independence of the $(\pi_{ i})_{ i=1, \ldots, N}$, the $U_{ k+1}^{ (i)}, i=1, \ldots, N$ are all distinct and 
$I_{ k+1}:= \left\lbrace i\in \left\lbrace1, \ldots, N\right\rbrace, U_{ k+1}^{ (i)}= U_{ k+1}\right\rbrace$
is uniquely defined. By definition \eqref{eq:Hawkes}, the above construction yields that $U_{ k+1}$ is necessarily the $k$th atom of $(Z_{ t}^{i})_{ i=1, \ldots, N}$ and that it concerns the process $Z^{i}$ for $i=I_{ k+1}$. This completes the proof of the existence of a pathwise unique Hawkes process as defined in \eqref{eq:Hawkes}. We now establish absence of explosion of this process.
\paragraph{Stochastic domination.} Introduce the following linear processes:
\begin{equation}
\label{eq:Zlin}
\begin{cases}
Z_{t}^{i, \text{lin}}&= \int_{0}^{t}\int_{0}^{\infty}\mathbf{1}_{z\le \lambda_{  s}^{i, \text{lin}}}\pi_{i}(\rmd s,\rmd z),\ i=1, \ldots, N_{ A},\\
Z_{t}^{i, \text{lin}}&= \int_{0}^{t}\int_{0}^{\infty}\mathbf{1}_{z\le \lambda_{  s}^{i, \text{lin}}} \tilde{ \pi}_{i}(\rmd s,\rmd z), \ i=N_{ A}+1,\ldots,N,
\end{cases}
\end{equation}
where (recall \eqref{hyp:F_bound} and \eqref{hyp:G_bound})
{\small\begin{equation}
\begin{cases}
\lambda_{  s}^{i, \text{lin}}=\lambda_{ s}^{A, \text{lin}}&:=  c_{ 1} \left(\frac{ 1}{ N} \displaystyle{\sum_{ j=1}^{ N_{ A}}}\int_{ 0}^{s^{ -}} h_{ 1}(s-u) {\rmd Z_{u}^{j, \text{lin}}} + \frac{ 1}{ N} \displaystyle{\sum_{ j=N_{ A}+1}^{ N}}\int_{ 0}^{s^{ -}} h_{ 2}(s-u) {\rmd Z_{u}^{j, \text{lin}}}\right) + c_{ 2},\ 1\le i\le  N_{ A},\\
\lambda_{  s}^{i, \text{lin}}=\lambda_{ s}^{B, \text{lin}}&:= c_{ 6} \left(\frac{ 1}{ N}  \underset{N_{A}+1\le j\le N}{\sum} \int_{ 0}^{s^{ -}} h_{ 3}(s-u) {\rmd Z_{u}^{j, \text{lin}}} + \frac{ 1}{ N}  \underset{1\le j\le N_{A}}{\sum}\int_{ 0}^{s^{ -}} h_{ 4}(s-u) {\rmd Z_{u}^{j, \text{lin}}}\right)+ c_{ 7}, \  N_{ A}+1\le i\le N.
\end{cases}
\end{equation}}
Existence and pathwise uniqueness of a process $ \left(Z^{i, \text{lin}}\right)_{ i=1, \ldots, N}$ such that $\sum_{ i=1}^{ N} \mathbf{ E} \left[ \sup_{ t\in [0, T]}Z_{ t}^{i,\text{lin}}\right]<\infty$ for all $T>0$ is a direct consequence of well-known existing results (see e.g. \cite{MR3449317}).

We prove here that the point processes $Z^{i}, i=1, \ldots, N$ are stochastically dominated by $Z^{i,\text{lin}}, i=1, \ldots, N$ given in \eqref{eq:Zlin}. More precisely, we show that for all $i=1, \ldots, N$, each atom of $Z^{i}$ is also an atom of $Z^{i, \text{lin}}$. We proceed by recursion: we claim that (recall \eqref{eq:model})
\begin{equation}
\label{eq:dom_lambda1}
\begin{cases}
\lambda_{  t}^{A} &\leq \lambda_{  t}^{A,\text{lin}},\\
\lambda_{  t}^{B} &\leq \lambda_{  t}^{B, \text{lin}},
\end{cases}\ t\in (0, U_{ 1}].
\end{equation}
Indeed, by construction of $Z^{i}, i=1, \ldots, N$, there are no atoms before $U_{ 1}$ so that, for $t\in \left(0, U_{ 1}\right)$, $\lambda_{  t}^{A} = F(0, 0) \leq c_{ 2} \leq \lambda_{  t}^{A, \text{lin}}$ and $\lambda_{  t}^{B} = G(0, 0) \leq c_{ 7} \leq \lambda_{  t}^{B, \text{lin}}$. Suppose that $I_{ 1}\in \left\lbrace1, \ldots, N_{ A}\right\rbrace$ (recall the definition of $I_{ 1}$ in \eqref{eq:I1}), the other case $I_{ 1}\in \left\lbrace N_{ A}+1, \ldots, N\right\rbrace$ is treated similarly. By definition of $U_{ 1}$, there exists an atom $(U_{ 1}, z)$ of $ \pi_{ I_{ 1}}$  with $0\leq z \leq \lambda_{ U_{ 1}}^{A}$ so that by \eqref{eq:dom_lambda1}, $0\leq z \leq \lambda_{ U_{ 1}}^{A,\text{lin}}$ so that by \eqref{eq:Zlin}, $U_{ 1}$ is also an atom of $Z_{ I_{ 1}}^{ \text{lin}}$.

Suppose now that $U_{ 1}, \ldots , U_{ k}$ (which are by definition atoms of $Z_{ I_{ 1}}, \ldots, Z_{ I_{ k}}$ respectively) are also atoms of $Z_{ I_{ 1}}^{ \text{lin}}, \ldots, Z_{ I_{ k}}^{ \text{lin}}$.  In particular, for $t\in [U_{ k}, U_{ k+1})$, recalling
\begin{align*}
\lambda_{  t}^{A}&= \Lambda_{ k}^{ A}(t) 
\leq c_{ 1} \left\lbrace \frac{ 1}{ N} \int_{ (0, U_{ k}]} h_{ 1}(t-s) {\rm d}Z_{ A, s}^{ (k)} + \frac{ 1}{ N}\int_{ (0, U_{ k}]} h_{ 2}(t-s) {\rm d}Z_{ B, s}^{ (k)}\right\rbrace + c_{ 2} \leq \lambda_{  t}^{A,\text{lin}},
\end{align*}
where we used \eqref{hyp:F_bound} and the fact that the number of atoms among $U_{ 1},\ldots, U_{ k}$ that concern population $A$ (resp. $B$) is the same for both processes $(Z^{i})_{ i=1, \ldots, N}$ and $(Z^{i, \text{lin}})_{ i=1, \ldots, N}$. The same reasoning proves that $ \lambda_{  t}^{B} \leq \lambda_{  t}^{B, \text{lin}}$ for $t\in [U_{ k}, U_{ k+1})$. Hence, this proves again that $U_{ k+1}$ (which is an atom of $Z_{ I_{ k+1}}$) is also an atom of $Z_{ I_{ k+1}}^{ \text{lin}}$, which proves the claim.
\paragraph{Absence of explosion}A direct consequence of the previous claim is  $ \mathbf{ E} \left[\sup_{ t\in[0, T]}Z_{ t}^{i}\right] \leq \mathbf{ E} \left[\sup_{ t\in[0, T]}Z_{ t}^{i, \text{lin}}\right]$ so that (recall \eqref{eq:Z})
\begin{equation}
\label{eq:apriori_control_Z}
 \mathbf{ E} \left[\sup_{ t\in [0, T]} Z_{t}\right] \leq C_{ T}, T>0
\end{equation}
for some constant $C_{ T}>0$. This yields in particular that almost surely $ U_{ \infty}:=\lim_{ k\to\infty} U_{ k}= +\infty$. Indeed, set $ \Omega_{ M}:= \left\lbrace U_{ \infty} \leq M\right\rbrace$ for all $M>0$. Then, $ \mathbf{ E} \left[Z_{ U_{ k}} \mathbf{ 1}_{ \Omega_{ M}}\right]=k \mathbf{ P} \left(\Omega_{ M}\right)$ on the one hand and $ \mathbf{ E} \left[Z_{ U_{ k}} \mathbf{ 1}_{ \Omega_{ M}}\right]\leq \mathbf{ E} \left[Z_{ M}\right]\leq C_{ M}$ on the other hand, by \eqref{eq:apriori_control_Z}. Hence, $ \mathbf{ P} \left( \Omega_{ M}\right)\leq \frac{ C_{ M}}{ k}$, for all $k\geq0$. Letting $k\to\infty$, $ \mathbf{ P} \left(\Omega_{ M}\right)=0$ for all $M>0$ which gives that $ \mathbf{ P} \left( U_{ \infty}=\infty\right)=1$.

Uniqueness of a trajectory follows from the inductive procedure. This concludes the proof of Proposition~\ref{prop:ExistUniq}.

\subsection{Proof of well-posedness of the mean-field limit}
\label{sec:proof_WP_MF}
We prove here Proposition~\ref{prop:mAB}. 
\subsubsection{Well-posedness of \eqref{eq:mtAB}}
\label{sec:proof_WP_mean}
Consider first the dynamics of the mean-value $(m_{ t}^{ A}, m_{ t}^{ B})$ given by \eqref{eq:mtAB}. Let us begin with an a priori estimate: for any solution $(m_{ A}, m_{ B})$ that is $ \mathcal{ C}^{ 1}$ on $[0, T]$ (if it exists), such a solution satisfies, for $ \lambda_{  t}^{A}:= (m_{  t}^{  A})^{\prime}$, $  \lambda_{ t}^{B}:= (m_{  t}^{  B})^{\prime}$,
\begin{equation*}
\begin{cases}
\lambda_{  t}^{A}&=F \left( \alpha\int_{ 0}^{t} h_{ 1}(s-u)  \lambda_{ u}^{A} {\rm d}u,(1- \alpha) \int_{ 0}^{t} h_{ 2}(t-u)  \lambda_{ u}^{B} {\rm d}u\right),\\
 \lambda_{ t}^{B}&=G \left( (1-\alpha)\int_{ 0}^{t} h_{ 3}(s-u)  \lambda_{ u}^{B} {\rm d}u, \alpha \int_{ 0}^{t} h_{ 4}(t-u)  \lambda_{ u}^{A} {\rm d}u\right).
\end{cases}
\end{equation*}
In particular $  \lambda_{ u}^{A}\geq0$ and $  \lambda_{ u}^{B}\geq0$ for all $u\in [0, T]$. Using \eqref{hyp:F_bound} and \eqref{hyp:G_bound}, we  obtain that, for $t\in [0, T]$
\begin{align*}
\lambda_{  t}^{A}&\leq  c_{ 2}+ c_{ 1} \left\lbrace\alpha\int_{ 0}^{t} h_{ 1}(t-v) \lambda_{ v}^{A} {\rm d}v + (1-\alpha)\int_{ 0}^{t} h_{ 2}(t-v) \lambda_{ v}^{B} {\rm d}v\right\rbrace,
\\
 \lambda_{ t}^{B}&\leq  c_{ 7}+ c_{ 6} \left\lbrace (1-\alpha)\int_{ 0}^{t} h_{ 3}(t-v) \lambda_{ v}^{B} {\rm d}v + \alpha\int_{ 0}^{t} h_{ 4}(t-v) \lambda_{ v}^{A} {\rm d}v\right\rbrace.
\end{align*}
By summation, we obtain, for $ \Lambda(t):= \lambda_{  t}^{A}+  \lambda_{ t}^{B}$, that for some constant $C>0$
\begin{equation}
\Lambda(t) \leq C \left(1+ \int_{ 0}^{t} H(t-v) \Lambda(v) {\rm d}v\right).
\end{equation}
An application of \cite{MR3449317}, Lemma~23 (i) leads to the following a priori bound: there exists of a constant $C_{ 0}>0$, depending on $T, F, G$ and $H$ such that
\begin{equation}
\label{eq:uniform_bound_lambdaA}
\sup_{ t\in[0, T]} \left\lbrace \lambda_{  t}^{A} +  \lambda_{ t}^{B}\right\rbrace= \sup_{ t\in[0, T]} \left\lbrace (m_{  t}^{  A})^{\prime} + (m_{  t}^{  B})^{\prime}\right\rbrace \leq C_{ 0}.
\end{equation}

We now turn to the proof of uniqueness of a solution to \eqref{eq:mtAB}: consider two solutions $(m_{ A}, m_{ B})$ and $(\tilde{ m}^{ A}, \tilde{ m}^{ B})$. Set $(v_{ A}(t), v_{ B}(t)):= \left( \int_{ 0}^{t} \left\vert {\rm d}(m_{ u}^{A} - \tilde{ m}_u^{ A}) \right\vert, \int_{ 0}^{t} \left\vert {\rm d}(m_{ u}^{B} - \tilde{ m}_{u}^{B}) \right\vert\right)$ and $v(t):= v_{ A}(t) + v_{ B}(t)$. Then,
\begin{align*}
v_{ A}(t)&= \int_{ 0}^{t} \left\vert {\rm d}(m_{ u}^{A} - \tilde{ m}_u^{ A}) \right\vert
\leq  \int_{ 0}^{t} \bigg\vert F \left( \alpha\int_{ 0}^{u} h_{ 1}(u-v) {\rm d} m_{  v}^{A},(1- \alpha) \int_{ 0}^{u} h_{ 2}(u-v) {\rm d} m_{  v}^{B} \right)\\
& \hspace{2cm}- F \left( \alpha\int_{ 0}^{u} h_{ 1}(u-v) {\rm d} m_{  v}^{A},(1- \alpha) \int_{ 0}^{u} h_{ 2}(u-v) {\rm d} \tilde{m}_{  v}^{B} \right) \bigg\vert {\rm d}u\\
&+ \int_{ 0}^{t} \bigg\vert F \left( \alpha\int_{ 0}^{u} h_{ 1}(u-v) {\rm d} m_{  v}^{A},(1- \alpha) \int_{ 0}^{u} h_{ 2}(u-v) {\rm d} \tilde{m}_{  v}^{B} \right)\\
& \hspace{2cm}- F \left( \alpha\int_{ 0}^{u} h_{ 1}(u-v) {\rm d} \tilde{m}_{  v}^{A},(1- \alpha) \int_{ 0}^{u} h_{ 2}(u-v) {\rm d} \tilde{m}_{  v}^{B} \right) \bigg\vert {\rm d}u
\end{align*}
Using \eqref{hyp:F_Lip_x} and \eqref{hyp:F_Lip_y}, we obtain
\begin{align*}
v_{ A}(t)&\leq  (1- \alpha) \int_{ 0}^{t} \left(c_{ 4}\alpha\int_{ 0}^{u} h_{ 1}(u-v) {\rm d} m_{  v}^{A}+c_{ 5}\right) \left\vert  \int_{ 0}^{u} h_{ 2}(u-v) ({\rm d} m_{  v}^{B}- {\rm d} \tilde{m}_{  v}^{B}) \right\vert {\rm d}u\\
&\hspace{2cm}+ c_{ 3} \alpha\int_{ 0}^{t} \left\vert \int_{ 0}^{u} h_{ 1}(u-v) ({\rm d} m_{  v}^{A}- {\rm d} \tilde{m}_{  v}^{A}) \right\vert {\rm d}u.
\end{align*}
Using here the uniform bound \eqref{eq:uniform_bound_lambdaA} to control the first term above, we obtain
\begin{align}
v_{ A}(t)&\leq  (1- \alpha) \left(c_{ 4}\alpha C_{ 0} \left\Vert h_{ 1} \right\Vert_{ 1, T}+c_{ 5}\right) \int_{ 0}^{t} h_{ 2}(t-u) v_{ B}(u) {\rm d}u + c_{ 3} \alpha \int_{ 0}^{t} h_{ 1}(t-u) v_{A}(u) {\rm d}u. \label{aux:gron_vA}
\end{align}
Similarly, using the Lipschitz continuity of $G$ \eqref{hyp:G_Lip}, we obtain secondly
\begin{align}
v_{ B}(t)&= \int_{ 0}^{t} \left\vert {\rm d}(m_{ u}^{B} - \tilde{ m}_u^{ B}) \right\vert
\leq c_{ 6}(1-\alpha)\int_{ 0}^{t} h_{ 3}(t-u) v_{ B}(u) {\rm d}u  + c_{ 6}\alpha \int_{ 0}^{t} h_{ 4}(t-u) v_{ A}(u) {\rm d}u. \label{aux:gron_vB}
\end{align}
Gathering \eqref{aux:gron_vA} and \eqref{aux:gron_vB} and applying Gronwall's inequality (recall \cite{MR3449317}, Lemma~23, (i)) gives uniqueness of a solution to \eqref{eq:mtAB}.

We now turn to the existence part. We follow here closely the argument of \cite{MR3449317}, Lemmas~23 and~24: consider the sequence $(m_{ A}^{ 0}, m_{ B}^{ 0}) \equiv (0, 0)$ and for $n\geq0$, 
\begin{equation*}
\begin{cases}
m_{  t}^{A, n+1}&= \int_{0}^{t}{F \left( \alpha\int_{ 0}^{s} h_{ 1}(s-u) {\rm d} m_{  u}^{A, n},(1- \alpha) \int_{ 0}^{s} h_{ 2}(s-u) {\rm d} m_{  u}^{B, n} \right)}\rmd s,\\
m_{  t}^{B, n+1}&= \int_{0}^{t}  G \left((1- \alpha) \int_{ 0}^{s} h_{ 3}(s-u) {\rm d} m_{  u}^{B, n}, \alpha\int_{ 0}^{s} h_{ 4}(s-u) {\rm d} m_{  u}^{A, n}\right) \rmd s,
\end{cases}
\end{equation*}
Using \eqref{hyp:F_bound} and \eqref{hyp:G_bound}, we obtain that
\begin{align*}
m_{  t}^{A, n+1}&\leq c_{ 2}t+ c_{ 1} \left\lbrace \alpha\int_{ 0}^{t} h_{ 1}(t-u)m_{  u}^{A,n} {\rm d}u + (1- \alpha) \int_{ 0}^{t} h_{ 2}(t-u) m_{  u}^{B, n} {\rm d}u\right\rbrace,\\
m_{  t}^{B, n+1}&\leq c_{ 7}t + c_{ 6}   \left\lbrace(1- \alpha) \int_{ 0}^{t} h_{ 3}(t-u) m_{  u}^{B, n} {\rm d}u + \alpha\int_{ 0}^{t} h_{ 4}(t-u) m_{  u}^{A, n} {\rm d}u\right\rbrace,
\end{align*}
so that $M^{ n}(t):= m_{  t}^{A, n} + m_{  t}^{B, n}$ satisfies, for some constant $C>0$
\begin{align*}
M^{ n+1}(t) \leq Ct + C \int_{ 0}^{t} H(t-u) M^{ n}(u) {\rm d}u.
\end{align*}
Applying \cite{MR3449317}, Lemma~23, (iii), we obtain that $\sup_{ t\in [0, T]}\sup_{ n\geq0} M^{ n}(t)$ is bounded by some constant $C>0$. Using this uniform estimate and the hypotheses on $F$ and $G$ gives, in a similar way to the uniqueness part, that for $ \delta_{ t}^{ n}:= \int_{ 0}^{t} \left\vert {\rm d}(m_{ u}^{A, n+1}- m_{ u}^{A,n}) \right\vert + \int_{ 0}^{t} \left\vert {\rm d}(m_{ u}^{B, n+1}- m_{ u}^{B,n}) \right\vert$,
\begin{equation}
\delta_{ t}^{ n+1} \leq C \int_{ 0}^{t} H(t-u) \delta_{ u}^{ n} {\rm d}u.
\end{equation}
By \cite{MR3449317}, Lemma~23, (ii), this gives that $\sum_{ n\geq0} \delta_{ t}^{ n}< \infty$, which implies the existence of a locally bounded trajectory $(m_{  t}^{A}, m_{  t}^{B})$ such that $\lim_{ n\to\infty} \int_{ 0}^{t} \left\vert {\rm d}(m_{  u}^{A} - m_{  u}^{A, n}) \right\vert + \int_{ 0}^{t} \left\vert {\rm d}(m_{  u}^{B} - m_{  u}^{B,n}) \right\vert=0$. This provides existence of a solution to \eqref{eq:mtAB}. It remains to check that it is $ \mathcal{ C}^{ 1}$. We easily see that $(m_{ A}^{ n}, m_{ B}^{ n})$ is $ \mathcal{ C}^{ 1}$, by induction, with a bound on $ \sup_{ u\in [0, T]} \left\vert (m_{ u}^{ A, n})^{\prime} \right\vert + \left\vert (m_{ u}^{ B, n } )^{\prime}\right\vert $ that is uniform in $n$. Using this uniform bound and the properties of $F$ and $G$, we directly see that $((m^{  A,n, })^{\prime}, (m^{  B, n})^{\prime})$ is Cauchy for the uniform convergence on $[0, T]$, which gives the desired conclusion.

\subsubsection{Well-posedness of the mean-field process}
\label{sec:proof_WP_Zbar}
Now turn to the mean-field limit \eqref{eq:barZ_gen}.
Suppose that such a process $(\bar Z_{t}^{A}, \bar Z_{t}^{B})$ exists. Then, taking expectation in \eqref{eq:barZ_gen}, we see that $(\mathbb{ E}(\bar Z_{t}^{A}), \mathbb{ E}(\bar Z_{t}^{B}))$ solves \eqref{eq:mtAB} and is thus uniquely defined, by Proposition~\ref{prop:mAB}. Hence, any solution to \eqref{eq:barZ_gen} solves necessarily
\begin{equation}
\label{eq:barZm}
\begin{cases}
\bar Z_{t}^{A}&= \int_{0}^{t}\int_{0}^{\infty}\mathbf{1}_{z\le F \left( \alpha\int_{ 0}^{s} h_{ 1}(s-u) {\rm d} m_{  u}^{A}, (1- \alpha) \int_{ 0}^{s} h_{ 2}(s-u) {\rm d} m_{ u}^{B}\right)}\pi(\rmd s,\rmd z),\\
\bar Z_{t}^{B}&= \int_{0}^{t}\int_{0}^{\infty}\mathbf{1}_{z\le  G\left((1- \alpha) \int_{ 0}^{s} h_{ 3}(s-u) {\rm d} m_{B,u}, \alpha\int_{ 0}^{s} h_{ 4}(s-u) {\rm d} m_{ u}^{A}\right)}\tilde{ \pi}(\rmd s,\rmd z),
\end{cases}
\end{equation}
which gives uniqueness. Concerning existence, consider the solution $(\bar{Z}_{ A,t}, \bar Z_{t}^{B})$ to \eqref{eq:barZm}. Then $(\mathbb{ E}(\bar Z_{t}^{A}), \mathbb{ E}(\bar Z_{t}^{B}))$ verifies
\begin{equation*}
\label{eq:mtABZ}
\begin{cases}
\mathbb{ E} \left(\bar Z_{ t}^{A}\right)&= \int_{0}^{t}{F\left( \alpha\int_{ 0}^{s} h_{ 1}(s-u) {\rm d} m_{  u}^{A}, (1- \alpha) \int_{ 0}^{s} h_{ 2}(s-u) {\rm d} m_{  u}^{B} \right)}\rmd s,\\
\mathbb{ E}(\bar Z_{t}^{B})&= \int_{0}^{t} \left\lbrace G \left((1- \alpha) \int_{ 0}^{s} h_{ 3}(s-u) {\rm d} m_{  u}^{B}, \alpha\int_{ 0}^{s} h_{ 4}(s-u) {\rm d} m_{  u}^{A}\right)\right\rbrace \rmd s,
\end{cases}
\end{equation*}
so that $(\mathbb{ E}(\bar Z_{t}^{A}), \mathbb{ E}(\bar Z_{t}^{B}))=(m_{  t}^{A}, m_{  t}^{B})$ since $(m^{ A}, m^{ B})$ solves \eqref{eq:mtAB}. This gives existence and Proposition~\ref{prop:mAB} is proven.

\subsection{Proof of propagation of chaos}
\label{sec:proof_prop_chaos}
We prove here Proposition~\ref{prop:conv}. Denote by
\begin{align*}
\Lambda_{ N, t}^{ (A, 1)}&:= \frac{ 1}{ N}  \underset{1\le j\le N_{A}}{\sum}\int_{ 0}^{t^{ -}} h_{ 1}(t-u) {\rmd Z_{u}^{j}},\ &\Lambda_{ N, t}^{ (B, 2)}&:=\frac{ 1}{ N}  \underset{N_{A}+1\le j\le N}{\sum} \int_{ 0}^{t^{ -}} h_{ 2}(t-u) {\rmd Z_{u}^{j}},\\
\Lambda_{ N, t}^{ (B, 3)}&:=\frac{ 1}{ N}  \underset{N_{A}+1\le j\le N}{\sum} \int_{ 0}^{t^{ -}} h_{ 3}(t-u) {\rmd Z_{u}^{j}},\ &\Lambda_{ N, t}^{ (A, 4)}&:=\frac{ 1}{ N}  \underset{1\le j\le N_{A}}{\sum}\int_{ 0}^{t^{ -}} h_{ 4}(t-u) {\rmd Z_{u}^{j}},
\end{align*}
so that \eqref{eq:Hawkes} becomes
\begin{equation*}
\begin{cases}
Z_{t}^{i}&= \int_{0}^{t}\int_{0}^{\infty}\mathbf{1}_{z\le F \left(\Lambda_{ n, s}^{ (A, 1)}, \Lambda_{ N, s}^{ (B, 2)}\right)}\pi_{i}(\rmd s,\rmd z),\ 1\le i\le N_{ A},\\
Z_{t}^{i}&= \int_{0}^{t}\int_{0}^{\infty}\mathbf{1}_{z\le G \left(\Lambda_{ N, s}^{ (B, 3)}, \Lambda_{ N, s}^{ (A, 4)}\right)} \tilde{ \pi}_{i}(\rmd s,\rmd z), \ N_{ A}+1\le i\le N.
\end{cases}
\end{equation*}
Denote also by 
\begin{align*}
\Lambda_{t}^{ (A, 1)}&:= \alpha\int_{ 0}^{t} h_{ 1}(t-u) {\rm d} \mathbb{ E} (\bar Z_{u}^{A}),\ 
&\Lambda_{t}^{ (B, 2)}&:=(1- \alpha) \int_{ 0}^{t} h_{ 2}(t-u) {\rm d} \mathbb{ E} \left(\bar Z_{u}^{B}\right),\\
\Lambda_{t}^{ (B, 3)}&:= (1- \alpha) \int_{ 0}^{t} h_{ 3}(t-u) {\rm d} \mathbb{ E} \left(\bar Z_{u}^{B}\right),\
&\Lambda_{t}^{ (A, 4)}&:= \alpha\int_{ 0}^{t} h_{ 4}(t-u) {\rm d} \mathbb{ E} \left(\bar Z_{ u}^{A}\right),
\end{align*}
so that the limiting system \eqref{eq:barZ_gen} becomes
\begin{equation*}
\begin{cases}
\bar Z_{t}^{A}&= \int_{0}^{t}\int_{0}^{\infty}\mathbf{1}_{z\le F\left(\Lambda_{s}^{ (A, 1)}, \Lambda_{s}^{ (B, 2)}\right)}\pi(\rmd s,\rmd z),\\
\bar Z_{t}^{B}&= \int_{0}^{t}\int_{0}^{\infty}\mathbf{1}_{z\le G \left(\Lambda_{s}^{ (B, 3)}, \Lambda_{s}^{ (A, 4)}\right)}\tilde{ \pi}(\rmd s,\rmd z).
\end{cases}
\end{equation*}
Let $((\bar Z^{ i,A})_{ i=1, \ldots, N_{ A}}, (\bar Z^{ i,B})_{ i=N_{ A}+1,\ldots, N})$ be the coupling driven by the same Poisson point processes $\pi_{ i}$, $i=1, \ldots, N_{ A}$ and $ \tilde{ \pi}_{ i}$, $i=N_{ A}+1, \ldots, N$ as for \eqref{eq:Hawkes}. With a slight abuse of notations, we use the same $\bar Z^{i}$ for $\bar Z^{ i,A}$ or $ \bar Z^{ i,B}$ depending on wether $i=1, \ldots, N_{ A}$ or $i=N_{ A}+1, \ldots, N$. For all $i=1, \ldots, N$, denote by $ \Delta_{ N}^{ i}(t):= \int_{ 0}^{t} \left\vert d \left( Z^{i}_{ u} - \bar Z^{i}_{  u}\right) \right\vert$ the total variation distance between processes $Z^{i}$ and $ \bar Z^{i}$ and $\delta_{N}(t):= \mathbf{ E} \left[ \Delta_{ N}^{ i}(t)\right]$. Firstly, for $i=1, \ldots, N_{ A}$:
\begin{align*}
\Delta_{ N}^{ i}(t)&= \int_{ 0}^{t} \left\vert \int_{0}^{\infty}\mathbf{ 1}_{ z \leq F \left(\Lambda_{ n, s}^{ (A, 1)}, \Lambda_{ N, s}^{ (B, 2)}\right)} - \mathbf{ 1}_{ z\leq F\left(\Lambda_{s}^{ (A, 1)}, \Lambda_{s}^{ (B, 2)}\right)} \right\vert\pi_{ i}({\rm d}s, {\rm d}z)
\end{align*}
so that
\begin{align}
\delta_{ N}(t)&\leq \int_{ 0}^{t} \mathbf{ E} \left[\left\vert  F \left(\Lambda_{ N, s}^{ (A, 1)}, \Lambda_{ N, s}^{ (B, 2)}\right) - F\left(\Lambda_{s}^{ (A, 1)}, \Lambda_{N, s}^{ (B, 2)}\right) \right\vert\right]{\rm d}s \label{aux:deltaF_1}\\ &+ \int_{ 0}^{t} \mathbf{ E} \left[\left\vert  F \left(\Lambda_{ s}^{ (A, 1)}, \Lambda_{ N, s}^{ (B, 2)}\right) - F\left(\Lambda_{s}^{ (A, 1)}, \Lambda_{s}^{ (B, 2)}\right) \right\vert\right]{\rm d}s \label{aux:deltaF_2}.
\end{align}
Using \eqref{hyp:F_Lip_x} in \eqref{aux:deltaF_1} and \eqref{hyp:F_Lip_y} in \eqref{aux:deltaF_2}, we obtain
\begin{align}
\label{aux:deltaNi1}
\delta_{ N}(t)&\leq c_{ 3}\int_{ 0}^{t} \mathbf{ E} \left[\left\vert \Lambda_{ N, s}^{ (A, 1)} - \Lambda_{s}^{ (A, 1)} \right\vert\right]{\rm d}s+ \int_{ 0}^{t} \left(c_{ 4} \Lambda_{ s}^{ (A, 1)} + c_{ 5}\right)\mathbf{ E} \left[ \left\vert  \Lambda_{ N, s}^{ (B, 2)} - \Lambda_{s}^{ (B, 2)} \right\vert\right]{\rm d}s.
\end{align}
Concentrate on the first term in the sum above: one has
\begin{align*}
\int_{ 0}^{t} \mathbf{ E} \left[\left\vert \Lambda_{ N, s}^{ (A, 1)} - \Lambda_{s}^{ (A, 1)} \right\vert\right]{\rm d}s
&\leq\int_{ 0}^{t} \mathbf{ E} \left[\left\vert \frac{ 1}{ N}  \underset{1\le j\le N_{A}}{\sum}\int_{ 0}^{s^{ -}} h_{ 1}(s-u) ({\rmd Z_{u}^{j}}- {\rm d}\bar Z_{ u}^{j}) \right\vert\right]{\rm d}s\\
&+\int_{ 0}^{t} \mathbf{ E} \left[\left\vert \frac{ 1}{ N}  \underset{1\le j\le N_{A}}{\sum}\int_{ 0}^{s^{ -}} h_{ 1}(s-u) {\rmd \bar Z_{u}^{j}} - \alpha\int_{ 0}^{s} h_{ 1}(s-u) {\rm d} \mathbb{ E} (\bar Z_{u}^{A}) \right\vert\right]{\rm d}s,\\
&\leq\alpha\int_{ 0}^{t} h_{ 1}(t-u) \delta_{ N}(u) {\rm d}u+\int_{ 0}^{t} \mathbf{ E} \left[\left\vert \frac{ \alpha}{ N_{ A}}  \underset{1\le j\le N_{A}}{\sum} \left(X_{ j}(s)- \mathbf{ E} \left[X_{ j}(s)\right] \right) \right\vert\right]{\rm d}s,
\end{align*}
where $X_{ j}(s):= \int_{ 0}^{s^{ -}} h_{ 1}(s-u) {\rmd \bar Z_{u}^{j}}$, $j=1, \ldots, N_{ A}$ is a collection of i.i.d. random variables, with $ Var \left[X_{ j}(s)\right]= \int_{ 0}^{s} h_{ 1}(s-u)^{ 2}  \lambda_{ u}^{A} {\rm d}u$, so that the Cauchy-Schwarz inequality leads to  \begin{align*}
\int_{ 0}^{t} \mathbf{ E} \left[\left\vert \frac{ \alpha}{ N_{ A}}  \underset{1\le j\le N_{A}}{\sum} \left(X_{ j}(s)- \mathbf{ E} \left[X_{ j}(s)\right] \right) \right\vert\right]{\rm d}s &\leq \frac{ \alpha}{ N_{ A}^{ \frac{ 1}{ 2}}} \int_{ 0}^{t}\left(\int_{ 0}^{s} h_{ 1}(s-u)^{ 2}  \lambda_{ u}^{A} {\rm d}u\right)^{ \frac{ 1}{ 2}} {\rm d}s,\\
& \leq \frac{ \alpha t\left\Vert \lambda^{ A} \right\Vert_{ \infty, t}^{ \frac{ 1}{ 2}} \left\Vert h_{ 1} \right\Vert_{ 2, t}}{ N_{ A}^{ \frac{ 1}{ 2}}} := \frac{ C_{ 1}(t)}{ \sqrt{N}},
\end{align*}  where $C_{1}(t)\le { \sqrt{\alpha} \left\Vert \lambda^{ A} \right\Vert_{ \infty}^{ \frac{ 1}{ 2}} \left\Vert h_{ 1} \right\Vert_{ 2}} t$.
We obtain finally that 
\begin{align*}
\int_{ 0}^{t} \mathbf{ E} \left[\left\vert \Lambda_{ N, s}^{ (A, 1)} - \Lambda_{s}^{ (A, 1)} \right\vert\right]{\rm d}s &\leq\alpha \int_{ 0}^{t} h_{ 1}(t-u) \delta_{ N}(u) {\rm d}u+ \frac{ C_{ 1}(t)}{ \sqrt{N}}
\end{align*}
Note secondly that
\begin{align*}
\Lambda_{ s}^{ (A, 1)} = \alpha \int_{ 0}^{t} h_{ 1}(t-u)  \lambda_{ u}^{A} {\rm d}u& \leq \alpha\left\Vert \lambda^{ A} \right\Vert_{ \infty, T} \left\Vert h_{ 1} \right\Vert_{ 1, t}:= C_{ 2}(t),
\end{align*} where $C_{2}(t)\le \kappa_{1}\left\Vert \lambda^{ A} \right\Vert_{ \infty}$.
Hence, proceeding similarly, there exists $C_{ 3}(t)>0$ such that
\begin{align*}
\int_{ 0}^{t} \left(c_{ 4} \Lambda_{ s}^{ (A, 1)} + c_{ 5}\right)\mathbf{ E} \left[ \left\vert  \Lambda_{ N, s}^{ (B, 2)} - \Lambda_{s}^{ (B, 2)} \right\vert\right]{\rm d}s& \leq (c_{ 4}C_{ 2}(t)+c_{ 5}) \left\lbrace\int_{ 0}^{t} (1-\alpha)h_{ 2}(t-u) \delta_{ N}(u) {\rm d}u+ \frac{ C_{ 3}(t)}{ \sqrt{N}}\right\rbrace,
\end{align*} where $C_{3}(t)\le (1-\alpha)\|\lambda^{B}\|_{\infty}^{\frac12}\|h_{2}\|_{2}$.
Collecting everything into \eqref{aux:deltaNi1}, gives the existence of a constant $C_{ 4}(t)>0$ 
 such that, for all $i=1, \ldots, N_{ A}$, 
\begin{align*}
\delta_{ N}(t)&\leq C_{ 4}(t) \int_{ 0}^{t} \left\lbrace \alpha h_{ 1}(t-u) +(1-\alpha) h_{ 2}(t-u)\right\rbrace \delta_{ N}(u) {\rm d}u+ \frac{C_{ 5}(t)}{ \sqrt{N}},
\end{align*} where $C_{4}(t)= c_{3}+c_{4}C_{2}(t)+c_{5}$ and $C_{5}(t)= c_{3}C_{1}(t)+(c_{4}C_{2}(t))+c_{5})C_{3}(t)\le C t.$
We now turn to the similar treatment of $i=N_{ A}+1, \ldots, N$: for such $i$, using the global Lipschitz continuity of $G$ \eqref{hyp:G_Lip}
\begin{align*}
\delta_{ N}(t)
&\leq c_{ 6}\int_{ 0}^{t} \mathbf{ E} \left[\left\vert  \Lambda_{ N, s}^{ (B, 3)} - \Lambda_{s}^{ (B, 3)} \right\vert + \left\vert \Lambda_{ N, s}^{ (A, 4)}- \Lambda_{s}^{ (A, 4)}\right\vert\right]{\rm d}s.
\end{align*}
Proceeding in the same way as before, we obtain, for some constant $C_{ 6}(t)>0$ 
\begin{align*}
\delta_{ N}(t)&\leq c_{ 6}\int_{ 0}^{t} \left\lbrace (1-\alpha)h_{ 3}(t-u) +\alpha h_{ 4}(t-u)\right\rbrace \delta_{ N}(u) {\rm d}u+ \frac{C_{ 6}(t)}{ \sqrt{N}},\end{align*} where $C_{6}(t)\le ( { \sqrt{1-\alpha} \left\Vert \lambda^{ B} \right\Vert_{ \infty}^{ \frac{ 1}{ 2}} \left\Vert h_{ 3} \right\Vert_{ 2}} + { \sqrt{\alpha} \left\Vert \lambda^{ A} \right\Vert_{ \infty}^{ \frac{ 1}{ 2}} \left\Vert h_{ 4} \right\Vert_{ 2}} )t$. This gives finally that, for some constants $C>0$ and $\overline C$\begin{align*}
\delta_{ N}(t)&\leq\overline C\int_{ 0}^{t} H(t-u) \delta_{ N}(u) {\rm d}u+C \frac{t}{ \sqrt{N}},
\end{align*}
for $H$ given in \eqref{def:H} and  $\overline C\le \max\{(c_{3}+c_{4}\kappa_{1}\|\lambda^{A}\|_{\infty}+c_{5})(\kappa_{1}+\kappa_{2}), c_{6}(\kappa_{3}+\kappa_{4})\}$. Observing that $\Delta^{i}_{N}(t)\ge \sup_{[0,t]}|Z^{i}_{u}-\bar Z^{i}_{u}|$ and applying Lemma~23 of \cite{MR3449317}  gives that\begin{align}
\sup_{ i=1, \ldots, N} \mathbf{ E} \left[\sup_{ u\in [0, t]} \left\vert Z_{ t}^{i} - \bar Z_{ t}^{i}\right\vert\right] \leq \frac{  C(t)}{ \sqrt{N}}.
\end{align} Additionally, if $\overline C<1$, using that $\delta_{N}$ is non decreasing it holds that $C(t)\le C t$ for some constant $C$ independent of $t$.
This concludes the proof of Proposition~\ref{prop:conv}.

\section{Proofs for the longtime dynamics}
\label{sec:proofs_longtime}
We prove here the results of Section~\ref{sec:MFL}. Recall that we consider the system \eqref{eq:lambdaAB} under Assumption~\ref{ass:Phis}. Recall  notations \eqref{eq:def_ells}. For $t\geq0$, introduce $u_{ A}(t):= \sup_{ s \geq t}  \lambda_{ s}^{A}$, $u_{ B}(t)= \sup_{ s \geq t}  \lambda_{ s}^{B}$, $v_{ A}(t):= \inf_{ s\geq t}  \lambda_{ s}^{A}$ and $v_{ B}(t):= \inf_{ s\geq t}  \lambda_{ s}^{B}$. Both $ t \mapsto u_{ A}(t)$ and $ t \mapsto u_{ B}(t)$ (resp. $ t \mapsto v_{ A}(t)$ and $ t \mapsto v_{ B}(t)$) are non-increasing (resp. non-decreasing), and we have that $ u_{ A}(t) \xrightarrow[t\to \infty]{} \bar\ell_{ A}$, $ u_{ B}(t) \xrightarrow[t\to \infty]{} \bar\ell_{ B}$, $ v_{ A}(t) \xrightarrow[t\to \infty]{} \underline\ell_{ A}$ and $ v_{ B}(t) \xrightarrow[t\to \infty]{} \underline\ell_{ B}$. 
\subsection{Some a priori estimates and first proofs}
\label{sec:proof_apriori_est}
\subsubsection{General estimates}
\label{sec:longtime_Asub}
\begin{remark}
\label{rem:ineq}
We state a bound that is constantly used in the following: for any kernel $ h\geq0$ and function $ \lambda\geq0$, for $v(u):= \inf_{ u\geq s} \lambda(u)$ and any $s\geq0$, 
\begin{align*}
\int_{ 0}^{s} h(s-u) \lambda(u) {\rm d}u 
\geq v (s/2)\int_{s/2}^{s} h(s-u) {\rm d}u= v(s/2)\int_{0}^{s/2} h(u) {\rm d}u.
\end{align*}
\end{remark}
A first a priori estimate is given by Proposition~\ref{prop:ineq_ells_1}:
\begin{proposition}
\label{prop:ineq_ells_1}
Consider Model \eqref{eq:lambdaAB} under Assumption~\ref{ass:Phis}. Then, the following inequalities on $\underline\ell_{ A}, \underline\ell_{ B},\bar \ell_{ B}\in[0, +\infty]$ are true:
\begin{align}
\underline \ell_{ B}&\geq \Phi_{ A\to B} \left(\underline\ell_{ A} \kappa_{ 4}\right),\label{eq:ulB_ulA_gen}\\
\bar \ell_{ B}&\leq  \Phi_{ B}(0) + \left\Vert \Phi_{ B} \right\Vert_{ L}\bar\ell_{ B} \kappa_{ 3} + \Phi_{ A\to B} \left(\bar \ell_{ A} \kappa_{ 4}\right).\label{eq:bound_ellB_1}
\end{align}
Moreover, in the particular case of the Model \eqref{eq:lambdaAB_lin} (Example~\ref{ex:linear}), we have
\begin{align}
\underline \ell_{ B}&\geq \mu_{ B} +  \underline\ell_{ B} \kappa_{ 3}+ \Phi_{ A\to B} \left(\underline\ell_{ A} \kappa_{ 4}\right),\label{eq:ulB_ulA}\\
\underline\ell_{ A}&\geq \left(\mu_{ A} + \kappa_{ 1} \underline \ell_{ A}\right) \Phi_{ B\to A} \left(\bar \ell_{ B} \kappa_{ 2}\right). \label{eq:ulA_blB}
\end{align}
\end{proposition}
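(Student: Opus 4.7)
The plan is to obtain all four inequalities by passing to $\liminf$ or $\limsup$ as $s\to\infty$ in the integral equations \eqref{eq:lambdaAB} (or its linear specialization \eqref{eq:lambdaAB_lin}), relying on two complementary convolution estimates for nonnegative integrable kernels. The first, a lower bound, is exactly Remark~\ref{rem:ineq}: for any $h\geq 0$ with $\|h\|_1<\infty$, one has $\int_0^s h(s-u)\lambda(u)\,\rmd u \geq (\inf_{u\geq s/2} \lambda(u))\int_0^{s/2} h(u)\,\rmd u$, whose right-hand side converges to $(\liminf_{t\to\infty}\lambda(t))\,\|h\|_1$. The second, a complementary upper bound, would be obtained by fixing $\varepsilon>0$, choosing $t_0$ such that $\lambda(u)\leq \limsup \lambda + \varepsilon$ for all $u\geq t_0$, and splitting the integral at $t_0$: the tail contributes at most $(\limsup \lambda + \varepsilon)\|h\|_1$, while the head $\int_0^{t_0} h(s-u)\lambda(u)\,\rmd u$ is bounded by $\sup_{v\in[s-t_0,s]} h(v)\cdot m^{\cdot}_{t_0}$, which vanishes as $s\to\infty$ since $h(u)\to 0$ by \eqref{hyp:h_to_0} and $\lambda$ is locally bounded by Proposition~\ref{prop:mAB}.

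For the bound \eqref{eq:ulB_ulA_gen} I would drop the nonnegative $\Phi_B$ term in the equation for $\lambda^B_s$ and use the monotonicity and continuity of $\Phi_{A\to B}$ together with the lower convolution estimate applied to $h_4$. Applied simultaneously to both the $h_3$ and $h_4$ convolutions in the linear case (where $\Phi_B(x)=\mu_B+x$ is itself nondecreasing), the same argument directly yields \eqref{eq:ulB_ulA}. For \eqref{eq:bound_ellB_1}, $\Phi_B$ is not assumed monotone, so I would instead use its Lipschitz bound $\Phi_B(x)\leq \Phi_B(0)+\|\Phi_B\|_L\, x$, combine it with the monotonicity of $\Phi_{A\to B}$ and the upper convolution estimate applied to $h_3$ and $h_4$, and take $\limsup$ in $s$.

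The most delicate step is \eqref{eq:ulA_blB}, because of the product structure and because $\Phi_{B\to A}$ is non-increasing, so bounding it from below forces us to bound its argument from above. Writing $\lambda^A_s = f(s)\,g(s)$ with
\[f(s) := \mu_A + \alpha\int_0^s h_1(s-u)\,\lambda^A_u\,\rmd u,\qquad g(s) := \Phi_{B\to A}\left((1-\alpha)\int_0^s h_2(s-u)\,\lambda^B_u\,\rmd u\right),\]
I would apply the lower estimate to $f$, yielding $\liminf_{s\to\infty} f(s)\geq \mu_A + \kappa_1 \underline{\ell}_A$, and the upper estimate to the argument of $g$, which by the continuity and non-increasing character of $\Phi_{B\to A}$ gives $\liminf_{s\to\infty} g(s)\geq \Phi_{B\to A}(\bar{\ell}_B\kappa_2)$. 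Since $f,g\geq 0$, one has $\liminf(fg)\geq (\liminf f)(\liminf g)$, closing the argument.

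The main technical obstacle throughout will be tracking the limits rigorously in the boundary cases $\underline{\ell}_A=+\infty$ or $\bar{\ell}_B=+\infty$, which are handled using the extensions $\Phi_{B\to A}(+\infty)=0$ and $\Phi_{A\to B}(+\infty)=+\infty$ from Assumption~\ref{ass:Phis}, and a careful separate treatment of the possibly infinite limsups in \eqref{eq:bound_ellB_1} (where \eqref{eq:bound_ellB_1} is then understood as a tautology).
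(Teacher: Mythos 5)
Your proposal is correct and follows essentially the same route as the paper: a lower convolution bound (Remark~\ref{rem:ineq}) paired with a split-at-finite-time upper bound whose head vanishes by \eqref{hyp:h_to_0}, then monotonicity of $\Phi_{A\to B}$, the non-increase of $\Phi_{B\to A}$, the Lipschitz bound on $\Phi_B$, and a separate treatment of the $\bar\ell_B=+\infty$ case. The only cosmetic difference is that the paper works with $u_\cdot(t):=\sup_{s\ge t}\lambda_s^\cdot$ and lets $t\to\infty$ rather than introducing a $\limsup+\varepsilon$ window, which packages the possibly-infinite limsup case without a separate argument.
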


\begin{remark}
Note that in the case $\bar \ell_{ B}=+\infty$, \eqref{eq:bound_ellB_1} is  trivial. When $ \bar \ell_{ B}= +\infty$, \eqref{eq:ulA_blB} has to be understood as the trivial inequality $ \underline{\ell}_{ A}\geq0$, whatever the value of $ \underline{ \ell}_{ A}\in[0, +\infty]$ (see the end of the proof below).
\end{remark}
\begin{proof}[Proof of Proposition~\ref{prop:ineq_ells_1}]
First we prove \eqref{eq:ulB_ulA_gen}: since $ \Phi_{ B}\geq0$ and $ \Phi_{ A\to B}$ is nondecreasing, using Remark~\ref{rem:ineq} for $ h_{ 4}$ and $ \lambda_{ A}$,  $s\geq 0$, it holds
\begin{align*}
 \lambda_{ s}^{B}&\geq  \Phi_{ A\to B} \left(\alpha  v_{ A}(s/2)\int_{ 0}^{s/2} h_{ 4}(u){\rm d}u\right).
\end{align*}
Taking $\liminf_{ s\to\infty}$ on both sides gives \eqref{eq:ulB_ulA_gen}. The treatment of \eqref{eq:ulB_ulA} in the linear case of Example~\ref{ex:linear} is similar: it suffices to take $\liminf_{ s\to\infty}$ in the following inequality: for $s\geq0$,
\begin{align*}
 \lambda_{ s}^{B}&\geq \mu_{ B} + (1- \alpha) v_{ B}(s/2)\int_{ 0}^{s/2} h_{ 3}(u) {\rm d}u + \Phi_{ A\to B} \left(\alpha  v_{ A}(s/2)\int_{ 0}^{s/2} h_{ 4}(u){\rm d}u\right).
\end{align*}
We now turn to \eqref{eq:bound_ellB_1}: by Lipschitz-continuity of $ \Phi_{ B}$ and the fact that $ \Phi_{ A\to B}$ is nondecreasing, for all $s\geq t\geq 0,$ it holds\begin{align*}
 \lambda_{ s}^{B}&\leq  \Phi_{ B}(0) + \left\Vert \Phi_{ B} \right\Vert_{ L} \left\lbrace (1- \alpha) \int_{ 0}^{t} h_{ 3}(s-u)  \lambda_{ u}^{B} {\rm d}u+ (1- \alpha) u_{ B}(t)\int_{ 0}^{s-t} h_{ 3}(u) {\rm d}u\right\rbrace \\
&\hspace{1cm}+ \Phi_{ A\to B} \left(\alpha \int_{ 0}^{t} h_{ 4}(s-u)  \lambda_{ u}^{A} {\rm d}u+ \alpha  u_{ A}(t)\int_{ 0}^{s-t} h_{ 4}(u) {\rm d}u\right).
\end{align*}
Using now that $h_{ 3}(u), h_{ 4}(u) \xrightarrow[ u\to \infty]{}0$ and that the continuous functions $ \lambda_{ A}$ and $ \lambda_{ B}$ are bounded on $[0, t]$, the dominated convergence theorem implies that for fixed $t\geq0$, $\int_{ 0}^{t} h_{ 3}(s-u)  \lambda_{ u}^{B}{\rm d} u \xrightarrow[ s\to \infty]{}0$ and $\int_{ 0}^{t} h_{ 4}(s-u)  \lambda_{ u}^{A}{\rm d} u \xrightarrow[ s\to \infty]{}0$.
Taking $\limsup_{ s\to\infty}$ with $t$ fixed yields
\begin{align}
\label{aux:ellB_VS_uB}
\bar \ell_{ B}&\leq  \Phi_{ B}(0) + \left\Vert \Phi_{ B} \right\Vert_{ L} \kappa_{ 3}  u_{ B}(t) + \Phi_{ A\to B} \left( \kappa_{ 4} u_{ A}(t)\right).
\end{align}
Taking $t\to\infty$ gives \eqref{eq:bound_ellB_1}. Concerning \eqref{eq:ulA_blB}, Remark~\ref{rem:ineq} and $ \Phi_{ B\to A}$ is non-increasing give for $s\geq t$, 
{\small\begin{align}
 \lambda_{ s}^{A}&\geq \left(\mu_{ A} + \alpha v_{ A} \left( \frac{ s}{ 2}\right)\int_{ 0}^{s/2} h_{ 1}(u){\rm d} u\right) \Phi_{ B\to A} \left((1- \alpha) \left\lbrace\int_{ 0}^{t} h_{ 2}(s-u)  \lambda_{ u}^{B} {\rm d}u + u_{ B}(t)\int_{0}^{s-t} h_{ 2}(u) {\rm d}u \right\rbrace\right) .\label{aux:lAlB_1}
\end{align}}
Since $ s \mapsto  \lambda_{ s}^{B}$ is continuous, it is bounded on each $[0, t]$ for all $t\geq0$, we get that $u_{ B}(t)=+\infty$ for some $t\geq0$ is equivalent to $u_{ B}(t)=+\infty$ for all $t\geq0$, itself equivalent to $\bar{ \ell}_{ B}=+\infty$. In this case, \eqref{aux:lAlB_1} reduces to $  \lambda_{ s}^{A}\geq0$ and taking $\limsup_{ s\to\infty}$ gives \eqref{eq:ulA_blB} in this case. Otherwise, proceeding as above, we see that for fixed $t\geq0$, $\int_{ 0}^{t} h_{ 2}(s-u)  \lambda_{ u}^{B}{\rm d} u \xrightarrow[ s\to \infty]{}0$. Taking $\liminf_{ s\to\infty}$ with $t$ fixed and then $t\to \infty$ in the previous inequality leads to \eqref{eq:ulA_blB}.
\end{proof}
Consider the following constraint (which reduces exactly to \eqref{eq:uB_in_I} in the linear Model~\eqref{eq:lambdaAB_lin})
\begin{equation}
\label{hyp:A_sub}
\kappa_{ 1} \left\Vert \Phi_{ A} \right\Vert_{ L}\Phi_{ B\to A} \left(\underline{ \ell}_{ B} \kappa_{ 2} \right)<1.
\end{equation}
\begin{proposition}
\label{prop:bounds_ells}
Consider Model \eqref{eq:lambdaAB} under Assumption~\ref{ass:Phis} and suppose \eqref{hyp:A_sub}. Then, it holds that
\begin{equation}
\label{eq:apriori_ellA}
\bar \ell_{ A} \leq \frac{ \Phi_{ A}(0)\Phi_{ B\to A} \left( \kappa_{ 2} \underline{ \ell}_{ B}\right)}{1- \kappa_{ 1} \left\Vert \Phi_{ A} \right\Vert_{ L}\Phi_{ B\to A} \left( \kappa_{ 2} \underline{ \ell}_{ B}\right)}.
\end{equation}
In particular $ 0\leq \underline \ell_{ A} \leq \bar \ell_{ A} <\infty$ implying that population $A$ is subcritical. 

Now restrict to Model \eqref{eq:lambdaAB_lin}: Inequality \eqref{eq:apriori_ellA} becomes, for the function $ \Psi_{ 1}$ given in \eqref{eq:Psi1},
\begin{equation}
\bar \ell_{ A} \leq \Psi_{ 1} \left(\underline \ell_{ B}\right)\label{eq:bound_ellA_1},
\end{equation}
and we have
\begin{equation}
\underline\ell_{ A} \geq  \Psi_{ 1} \left(\bar\ell_{ B}\right)\label{eq:bound_ellA_2}.
\end{equation}
\end{proposition}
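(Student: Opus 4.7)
The plan is to derive the a priori bound on $\bar \ell_A$ directly from the defining equation for $\lambda^A_s$ in \eqref{eq:lambdaAB}, and then deduce the linear estimates as consequences.

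First, I would split the time integrals at a free parameter $t>0$: for $s\geq t$, the Lipschitz bound $\Phi_A(x)\leq \Phi_A(0)+\|\Phi_A\|_L x$ together with the fact that $\Phi_{B\to A}$ is non-increasing give
\begin{align*}
\lambda^A_s \leq \left(\Phi_A(0)+\|\Phi_A\|_L\alpha\!\!\int_0^t h_1(s-u)\lambda^A_u\,\rmd u+\|\Phi_A\|_L\,\alpha\!\!\int_t^s h_1(s-u)\lambda^A_u\,\rmd u\right)\Phi_{B\to A}\!\!\left((1-\alpha)v_B(t)\!\!\int_0^{s-t}\!\!h_2(u)\,\rmd u\right).
\end{align*}
The tail integral of $h_1$ on $[t,s]$ is bounded by $u_A(t)\kappa_1/\alpha\cdot\alpha = \kappa_1 u_A(t)$, and, exactly as in the proof of Proposition~\ref{prop:ineq_ells_1}, the head integral $\int_0^t h_1(s-u)\lambda^A_u\,\rmd u$ vanishes as $s\to\infty$ by dominated convergence (since $h_i(v)\to 0$ and $\lambda^A$ is bounded on $[0,t]$), while $(1-\alpha)\int_0^{s-t}h_2(u)\,\rmd u\to \kappa_2$.

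The main obstacle is showing $\bar\ell_A<\infty$ before one is allowed to rearrange the inequality. Here I would exploit \eqref{hyp:A_sub} together with the continuity of $\Phi_{B\to A}$ to select $t^\ast$ large so that
$q:=\|\Phi_A\|_L\kappa_1\Phi_{B\to A}\bigl((1-\alpha)v_B(t^\ast)\int_0^{s-t^\ast}h_2\bigr)<1$ for every $s$ beyond some $t_\ast\geq t^\ast$. Substituting into the displayed inequality above, taking the supremum over $s\in[t_\ast,S]$ of the left-hand side while estimating $\sup_{u\in[t^\ast,s]}\lambda^A_u$ on the right by $\sup_{u\in[t^\ast,t_\ast]}\lambda^A_u$ (finite by continuity of $\lambda^A$) plus $N_S:=\sup_{u\in[t_\ast,S]}\lambda^A_u$, produces an inequality of the form $N_S\leq A+qN_S$ with $A$ independent of $S$, hence $N_S\leq A/(1-q)$ uniformly. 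This yields $\sup_{s\geq t_\ast}\lambda^A_s<\infty$ and in particular $\bar\ell_A<\infty$.

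Once finiteness is established, taking $\limsup_{s\to\infty}$ and then $t\to\infty$ in the split inequality gives
$\bar\ell_A\leq (\Phi_A(0)+\|\Phi_A\|_L\kappa_1\bar\ell_A)\,\Phi_{B\to A}(\kappa_2\underline\ell_B)$,
and solving yields \eqref{eq:apriori_ellA}. Specialising to Example~\ref{ex:linear} with $\Phi_A(x)=\mu_A+x$, $\|\Phi_A\|_L=1$ and $\Phi_A(0)=\mu_A$ produces precisely $\bar\ell_A\leq\Psi_1(\underline\ell_B)$, which is \eqref{eq:bound_ellA_1}. For \eqref{eq:bound_ellA_2}, I would start from the lower bound \eqref{eq:ulA_blB} of Proposition~\ref{prop:ineq_ells_1}, namely $\underline\ell_A\geq(\mu_A+\kappa_1\underline\ell_A)\Phi_{B\to A}(\kappa_2\bar\ell_B)$. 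Since $\underline\ell_B\leq\bar\ell_B$ and $\Phi_{B\to A}$ is non-increasing, \eqref{hyp:A_sub} automatically implies $\kappa_1\Phi_{B\to A}(\kappa_2\bar\ell_B)<1$, so rearranging is legal and gives $\underline\ell_A\geq\Psi_1(\bar\ell_B)$; the trivial case $\bar\ell_B=\infty$ reduces to $\underline\ell_A\geq 0$ with the natural convention $\Psi_1(+\infty)=0$.
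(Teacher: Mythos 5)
Your proof is correct and follows essentially the same route as the paper's: both first establish $\sup_{s\geq t}\lambda_s^A<\infty$ via a self-referential inequality whose contraction coefficient is forced below $1$ by \eqref{hyp:A_sub} (the paper tracks $u_A(t,t^\prime)=\sup_{s\in[t,t^\prime]}\lambda_s^A$ directly, while you track $N_S$ via an additive split — an equivalent bookkeeping), then obtain \eqref{eq:apriori_ellA} by taking $\limsup_{s\to\infty}$ at fixed $t$ followed by $t\to\infty$, and deduce \eqref{eq:bound_ellA_2} from \eqref{eq:ulA_blB} together with $\underline\ell_B\leq\bar\ell_B$ and the monotonicity of $\Phi_{B\to A}$. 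The choice of lower bound inside $\Phi_{B\to A}$ (your $v_B(t)\int_0^{s-t}h_2$ versus the paper's $v_B(s/2)\int_0^{s/2}h_2$ from Remark~\ref{rem:ineq}) is an immaterial variant.
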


\begin{proof}[Proof of Proposition~\ref{prop:bounds_ells}]

We first claim that, under the present hypotheses,
\begin{equation}
\label{eq:uniform_control_uA}
u_{ A}(t)<\infty,\ t\geq0.
\end{equation}
Indeed, recall that $u_{ A}(t):= \sup_{ s\geq t}  \lambda_{ s}^{A}$ and (with a slight abuse of notations) denote for $t\leq t^{ \prime}$, $u_{ A}(t, t^{ \prime})= \sup_{ s\in[t, t^{ \prime}]}  \lambda_{ s}^{A}<\infty$. Then, for $t\leq s\leq t^{ \prime}$, using Remark~\ref{rem:ineq}, the fact that $\Phi_{A}$ is Lipschitz continuous, $ \Phi_{ B\to A}$ is non-increasing and  $ z \mapsto v_{ B}(z) \int_{ 0}^{z} h_{ 2}(u) {\rm d}u$ is non-decreasing, gives
\begin{align*}
 \lambda_{ s}^{A} &\leq \left( \Phi_{ A}(0)+\alpha \left\Vert \Phi_{ A} \right\Vert_{ L} \left\lbrace  \left\Vert \lambda^{ A} \right\Vert_{ \infty, t}\int_{ 0}^{t} h_{ 1}(s-u) {\rm d}u +  u_{ A}(t, t^{\prime})\int_{ t}^{s} h_{ 1}(s-u) {\rm d}u\right\rbrace\right) \\
&\hspace{3cm}\times\Phi_{ B\to A} \left((1- \alpha) v_{ B} \left( \frac{ t}{ 2}\right) \int_{ 0}^{ \frac{ t}{ 2}} h_{ 2}(u) {\rm d}u\right),\\
&\leq\left( \Phi_{ A}(0)+\alpha \left\Vert \Phi_{ A} \right\Vert_{ L} \left\lbrace  \left\Vert \lambda^{ A} \right\Vert_{ \infty, t} \left\Vert h_{ 1} \right\Vert_{ 1}+  u_{ A}(t, t^{\prime}) \left\Vert h_{ 1} \right\Vert_{ 1}\right\rbrace\right) \Phi_{ B\to A} \left((1- \alpha) v_{ B} \left( \frac{ t}{ 2}\right) \int_{ 0}^{ \frac{ t}{ 2}} h_{ 2}(u) {\rm d}u\right).
\end{align*}
Recalling the definition of $ \kappa_{ 1}$ in \eqref{eq:kappas}
\begin{equation}
\label{aux:uAttprime}
u_{ A}(t, t^{ \prime})\leq\left( \Phi_{ A}(0)+ \kappa_{ 1} \left\Vert \Phi_{ A} \right\Vert_{ L} \left\lbrace  \left\Vert \lambda^{ A} \right\Vert_{ \infty, t} + u_{ A}(t, t^{\prime})\right\rbrace \right) \Phi_{ B\to A} \left((1- \alpha) v_{ B} \left( \frac{ t}{ 2}\right) \int_{ 0}^{ \frac{ t}{ 2}} h_{ 2}(u) {\rm d}u\right).
\end{equation}
Since $\Phi_{ B\to A}\big ((1- \alpha) v_{ B} ( \frac{ t}{ 2}) \int_{ 0}^{ \frac{ t}{ 2}} h_{ 2}(u) {\rm d}u\big) \xrightarrow[ t\to \infty]{}\Phi_{ B\to A} \left( \underline{ \ell}_{ B} \kappa_{ 2}\right)$, there exists, by \eqref{hyp:A_sub}, some $t_{ \ast}$, such that for $t\geq t_{ \ast}$, 
$\kappa_{ 1} \left\Vert \Phi_{ A} \right\Vert_{ L}\Phi_{ B\to A} \big((1- \alpha) v_{ B} ( \frac{ t}{ 2}) \int_{ 0}^{ \frac{ t}{ 2}} h_{ 2}(u) {\rm d}u\big)<1.
$ 
Thus, we deduce from \eqref{aux:uAttprime} that, for $t\geq t_{ \ast}$
\begin{equation*}
u_{ A}(t, t^{ \prime})\leq \frac{\left( \Phi_{ A}(0)+ \left\Vert \Phi_{ A} \right\Vert_{ L}\left\Vert \lambda^{ A} \right\Vert_{ \infty, t} \kappa_{ 1}\right) \Phi_{ B\to A} \left((1- \alpha) v_{ B} \left( \frac{ t}{ 2}\right) \int_{ 0}^{ \frac{ t}{ 2}} h_{ 2}(u) {\rm d}u\right)}{ 1- \kappa_{ 1} \left\Vert \Phi_{ A} \right\Vert_{ L}\Phi_{ B\to A} \left((1- \alpha) v_{ B} \left( \frac{ t}{ 2}\right) \int_{ 0}^{ \frac{ t}{ 2}} h_{ 2}(u) {\rm d}u\right)}.
\end{equation*}
Letting now $t^{ \prime}\to\infty$ in the previous inequality yields that $u_{ A}(t) < \infty$ for all $t\geq t_{ \ast}$. Noting that for $t\in [0, t_{ \ast}]$, $u_{ A}(t) \leq \max \left( \left\Vert \lambda^{ A} \right\Vert_{ \infty, t_{ \ast}}, u_{ A}(t_{ \ast})\right)$, this proves Claim \eqref{eq:uniform_control_uA}.

We now turn to the proof of \eqref{eq:apriori_ellA}. Using again Remark~\ref{rem:ineq} and the fact that $ \Phi_{ A}$ is Lipschitz and $ \Phi_{ B\to A}$ is non-increasing, we obtain that for all $s\geq t\geq 0$,
\begin{align}
 \lambda_{ s}^{A}
&\leq\left(\Phi_{ A}(0) + \left\Vert \Phi_{ A} \right\Vert_{ L} \left\lbrace \alpha \int_{ 0}^{t} h_{ 1}(s-u)  \lambda_{ u}^{A}{\rm d} u+ \alpha u_{ A}(t)\int_{0}^{s-t} h_{ 1}(u){\rm d} u\right\rbrace\right) \nonumber\\
&\hspace{3cm}\times\Phi_{ B\to A} \left((1- \alpha) v_{ B}(s/2)\int_{ 0}^{s/2} h_{ 2}(u) {\rm d}u\right)\label{aux:lA_sub}.
\end{align}
The dominated convergence theorem and  $h_{ 1}(u) \xrightarrow[ u\to \infty]{}0$ gives  $\int_{ 0}^{t} h_{ 1}(s-u)  \lambda_{ u}^{A}{\rm d} u \xrightarrow[ s\to \infty]{}0$ for any fixed $t\geq0$. Taking $\limsup_{ s\to\infty}$ in \eqref{aux:lA_sub} (for fixed $t$), we obtain,
$\bar \ell_{ A}\leq\left( \Phi_{ A}(0) + \left\Vert \Phi_{ A} \right\Vert_{ L} \kappa_{ 1}u_{ A}(t)\right) \Phi_{ B\to A} \left( \underline \ell_{ B} \kappa_{ 2}\right)$.
This together with \eqref{eq:uniform_control_uA} proves in particular that $\bar \ell_{ A}<\infty$. Taking finally $t\to\infty$, we obtain,
\begin{align}
\label{aux:ellA_bar1}
\bar \ell_{ A}&\leq\left( \Phi_{ A}(0) + \kappa_{ 1} \left\Vert \Phi_{ A} \right\Vert_{ L}\bar \ell_{ A}\right) \Phi_{ B\to A} \left( \underline \ell_{ B} \kappa_{ 2}\right).
\end{align}
Using \eqref{hyp:A_sub} in \eqref{aux:ellA_bar1} yields \eqref{eq:apriori_ellA}. In the case of Example~\ref{ex:linear} (where $ \left\Vert \Phi_{ A} \right\Vert_{ L}=1$), \eqref{eq:bound_ellA_1} is a straightforward consequence of \eqref{eq:apriori_ellA}.
Turning now to the proof of \eqref{eq:bound_ellA_2}, using hypothesis \eqref{hyp:A_sub} together with $\bar \ell_{ B}\geq \underline{ \ell}_{ B}$, we obtain that $ \kappa_{ 1}\Phi_{ B\to A} \left(\bar \ell_{ B} \kappa_{ 2}\right)<1$, which leads to \eqref{eq:bound_ellA_2}, using \eqref{eq:ulA_blB}.
\end{proof}
Based on the previous a priori estimates, we can first prove the convergence theorems in the not fully coupled cases: the proof of the easy Proposition~\ref{prop:k2_0_k4_0} being an immediate consequence of \cite{MR3449317}, Th.~10 and 11, we do not give its proof here (and anyway, its proof may be easily adapted from the arguments that follow below, so we leave the details to the reader).
\subsubsection{The case $ \kappa_{ 2}>0$ and $ \kappa_{ 4}=0$.}
\label{sec:proof_k2_pos_k4_0}
We prove here Proposition~\ref{prop:k2_pos_k4_0}. As $ \kappa_{ 4}=0$, the dynamics of $B$ reduces to a single Hawkes dynamics independent from $A$. Therefore, the longtime asymptotics of $  \lambda_{ t}^{B}$ depending on the dichotomy $\kappa_{ 3}<1$ or $ \kappa_{ 3}>1$ can easily be retrieved from \eqref{eq:bound_ellB_1} and \eqref{eq:ulB_ulA}, since $ \Phi_{ A\to B}(\cdot \kappa_{ 4})\equiv 0$ here (see also \cite{MR3449317}, Th.~10 and 11).

If $ \kappa_{ 3}<1$, we have that $ \underline{ \ell}_{ B}= \bar \ell_{ B}= \frac{ \mu_{ B}}{ 1- \kappa_{ 3}}$. Here, the condition $ \kappa_{ 1} \Phi_{ B\to A} \left( \frac{ \kappa_{ 2} \mu_{ B}}{ 1- \kappa_{ 3}}\right)<1$ precisely boils down to \eqref{hyp:A_sub}. Therefore, the results of Proposition~\ref{prop:bounds_ells} are valid: population $A$ is subcritical and, by \eqref{eq:bound_ellA_1} and \eqref{eq:bound_ellA_2}, we obtain that $ \lambda_{  t}^{A} \xrightarrow[ t\to\infty]{} \Psi_{ 1} \left( \frac{ \mu_{ B}}{ 1- \kappa_{ 3}}\right)$,  the desired result.

Now turn to the case $ \kappa_{ 1} \Phi_{ B\to A} \left( \frac{ \kappa_{ 2} \mu_{ B}}{ 1- \kappa_{ 3}}\right)>1$: this together with \eqref{eq:ulA_blB}  imply $\underline\ell_{ A}
\geq \frac{ \mu_{ A}}{ \kappa_{ 1}} + \rho\underline \ell_{ A} $, with $ \rho:=\kappa_{ 1} \Phi_{ B\to A} \left( \frac{ \kappa_{ 2} \mu_{ B}}{ 1- \kappa_{ 3}}\right) >1$. Since $ \mu_{ A}>0$ and $\underline\ell_{ A} \geq  \rho\underline \ell_{ A} $ , the only possibility is $ \underline{ \ell}_{ A}=+\infty$, which gives that $ \lambda_{  t}^{A} \xrightarrow[ t\to\infty]{}+\infty$.

Consider now the case $ \kappa_{ 3}>1$ and $ \mu_{ B}>0$. Then, $  \lambda_{ t}^{B} \xrightarrow[ t\to\infty]{}+\infty$, as $ \lim_{ +\infty}\Phi_{ B\to A}=0$, condition \eqref{hyp:A_sub} is verified and one obtains from \eqref{eq:bound_ellA_1} (recall that $ \lim_{ +\infty} \Psi_{ 1}=0$) that $ \bar \ell_{ A}=0$, which gives $ \lambda_{  t}^{A} \xrightarrow[ t\to \infty]{}0$. Proposition~\ref{prop:k2_pos_k4_0} is proven.

\subsubsection{The case $ \kappa_{ 2}=0$ and $ \kappa_{ 4}>0$}
\label{sec:proof_k2_0_k4_pos}
We prove Proposition~\ref{prop:k2_0_k4_pos}. The dynamics of population $A$ follows a standard Hawkes dynamics, isolated from population $B$. Each estimate concerning $ \lambda_{  t}^{A}$ as $t\to\infty$ hence follows from  e.g. \cite{MR3449317}. If $ \kappa_{ 1}<1$,it holds $ \lambda_{  t}^{A} \xrightarrow[ t\to\infty]{} \frac{ \mu_{ A}}{ 1- \kappa_{ 1}}$.  Suppose $ \kappa_{ 3}<1$, the convergence result of $  \lambda_{ t}^{B}$ as $t\to\infty$ is a direct application of \eqref{eq:bound_ellB_1} and \eqref{eq:ulB_ulA} using that $\|\Phi_{B}\|_{L}=1$. The case $ \kappa_{ 3}>1$ follows also from \eqref{eq:ulB_ulA}: we have $ \underline{ \ell}_{ B} \geq \mu_{ B} + \kappa_{ 3} \underline{ \ell}_{ B}$ which gives that $ \underline{ \ell}_{ B}=+\infty$ since $ \kappa_{ 3}>1$ and $ \mu_{ B}>0$. In the case $ \kappa_{ 1}>1$, $ \lambda_{  t}^{A} \xrightarrow[ t\to\infty]{} +\infty$ and one obtains from \eqref{eq:ulB_ulA_gen} that $ \underline{ \ell}_{ B}= +\infty$, since $ \kappa_{ 4}>0$ and $ \lim_{ +\infty} \Phi_{ A\to B}=+\infty$. Proposition~\ref{prop:k2_0_k4_pos} is proven.

\subsection{Proofs in the fully-coupled case $ \kappa_{ 2} \kappa_{ 4}>0$}
\label{sec:proof_fully_coupled}
We prove here the results of Section~\ref{sec:fully_coupled}.
\subsubsection{Proofs of auxiliary results}

\paragraph{Proof of the general subcriticality result (Theorem~\ref{th:A_subcritical}):}
\label{sec:th_A_subcritical}
Proceed by contradiction and suppose that $ \underline{ \ell}_{ A}= +\infty$. Since  $ \kappa_{ 4}>0$ and $ \Phi_{ A\to B}(x) \xrightarrow[ x\to\infty]{}+\infty$ by Assumption~\ref{ass:Phis}, one deduces from \eqref{eq:ulB_ulA_gen} that $ \underline{ \ell}_{ B}= +\infty$. As $ \kappa_{ 2}>0$ and $ \Phi_{ B\to A}(x) \xrightarrow[ x\to\infty]{}0$, then inequality \eqref{hyp:A_sub} is verified. Hence, the conclusions of Proposition~\ref{prop:bounds_ells} hold and $ 0 \leq \underline{ \ell}_{ A} \leq \bar \ell_{ A}<\infty$ which leads to a contradiction. In the case where $  \lambda_{ t}^{B} \xrightarrow[ t\to\infty]{}+\infty$, \eqref{hyp:A_sub} holds and point 2.  is an immediate consequence of \eqref{eq:apriori_ellA} and the fact that $ \Phi_{ B\to A}(x) \xrightarrow[ x\to\infty]{}0$.
\paragraph{The case population $B$ is supercritical (Proposition~\ref{prop:B_sup}):}
\label{sec:th_B_sup}
We use here \eqref{eq:ulB_ulA}: since $ \Phi_{ A\to B}\geq0$, we obtain from this inequality that both $ \underline{ \ell}_{ B}\geq \mu_{ B}$ and $ \underline{\ell_{ B}} \geq \kappa_{ 3} \underline{ \ell}_{ B}$. Since $ \mu_{ B}>0$, $\underline{ \ell}_{ B}>0$ and since $ \kappa_{ 3}>1$, the only possibility is that $ \underline{ \ell}_{ B}= +\infty$, which gives the first part of the result. The second part is a direct consequence of Theorem~\ref{th:A_subcritical}.

\subsubsection{Proof of the main convergence result (Theorem~\ref{th:full_conv})}
\label{sec:th_full_conv}
Recall that we concentrate now on Model \eqref{eq:lambdaAB_lin} (Example~\ref{ex:linear}), satisfying Assumption~\ref{ass:Phis},  $\kappa_{2}>0$ and $\kappa_{4}>0$, in case population $B$ is subcritical (i.e. $\kappa_{3}<1$). We divide the proof into several steps.
\paragraph{Subcriticality of population $B$ and a priori estimates (item 1)}
As $t \mapsto  \lambda_{ t}^{B}$ is continuous it is bounded on any compact interval, it suffices to prove that prove that $\bar \ell_{ B}<\infty$. Proceeding in a way similar to the proof of Proposition~\ref{prop:bounds_ells}, we define for $t\leq t^{ \prime}$, $u_{ B}(t, t^{ \prime})= \sup_{ s\in[t, t^{ \prime}]}  \lambda_{ s}^{B}$. Then, for $t\leq s\leq t^{ \prime}$, it holds that
\begin{align*}
 \lambda_{ s}^{B} &\leq\mu_{ B} +  \kappa_{ 3}\left\Vert \lambda^{ B} \right\Vert_{ \infty, t} + \kappa_{ 3} u_{ B}(t, t^{ \prime}) +    \Phi_{ A\to B} \left(\kappa_{ 4}\left\Vert \lambda^{ A} \right\Vert_{ \infty, t}  +   \kappa_{ 4}u_{ A}(t)\right).
\end{align*}
As before, using $\kappa_{3}<1$, making $t^{\prime}\to \infty$ and using \eqref{eq:uniform_control_uA}, we obtain
\begin{equation}
\label{eq:uniform_control_uB}
u_{ B}(t)<\infty, \ t\geq0.
\end{equation}
We deduce immediately from \eqref{eq:uniform_control_uB} and \eqref{aux:ellB_VS_uB} that $\bar \ell_{ B}<\infty$. Let us now prove the a priori bound $\underline{\ell}_{ B} \geq \frac{ \mu_{ B}}{ 1- \kappa_{ 3}}$. If $ \underline{\ell}_{ B}=+\infty$, it is trivially verified and if $ \underline{\ell}_{ B}<+\infty$, one obtains from \eqref{eq:ulB_ulA} that $ \underline{ \ell}_{ B}\geq \mu_{ B} + \underline{ \ell}_{ B} \kappa_{ 3}$ which gives the result.

Let us prove now that $ \bar\ell_{ B}\in \mathcal{ I}_{ \kappa_{ 1}, \kappa_{ 2}}$ (recall the definition of $ \mathcal{ I}_{ \kappa_{ 1}, \kappa_{ 2}}$ in \eqref{eq:I_kappas}). Indeed, recall that, under the present hypotheses, $ \underline{ \ell}_{ A}<\infty$ (Theorem~\ref{th:A_subcritical}). Recall now the a priori bound \eqref{eq:ulA_blB}:  $\underline\ell_{ A}\geq \left(\mu_{ A} + \kappa_{ 1} \underline \ell_{ A}\right) \Phi_{ B\to A} \left(\bar \ell_{ B} \kappa_{ 2}\right)$ so that $ \underline{ \ell}_{ A} \left( 1- \kappa_{ 1} \Phi_{ B\to A} \left( \bar \ell_{ B} \kappa_{ 2}\right)\right)\geq \mu_{ A} \kappa_{ 1}\Phi_{ B\to A} \left( \bar{ \ell}_{ B} \kappa_{ 2}\right)\geq0$. There are two possibilities: either $ \underline{ \ell}_{ A}>0$ and the previous inequality gives $\kappa_{ 1} \Phi_{ B\to A} \left(\bar{ \ell}_{ B} \kappa_{ 2} \right)<1$, since $ \mu_{ A}>0$; either $ \underline{ \ell}_{ A}=0$ in such a case $ \kappa_{ 1}\Phi_{ B\to A} \left(\bar \ell_{ B} \kappa_{ 2}\right)=0$,  since $ \mu_{ A}>0$ and $\bar\ell_{ B}\in \mathcal{ I}_{ \kappa_{ 1}, \kappa_{ 2}}$ holds trivially. Hence \eqref{eq:apriori_ellsB} is true.

\paragraph{Convergence of $ \lambda_{ A}$ implies convergence of $ \lambda_{ B}$ (Item 2)}
Since $ \Phi_{ B}(0)= \mu_{ B}$ and $ \left\Vert \Phi_{ B} \right\Vert_{ L}=1$, \eqref{eq:bound_ellB_1} and \eqref{eq:ulB_ulA} read here
\begin{align}
\bar \ell_{ B} &\leq  \Psi_{ 2} \left(\bar \ell_{ A}\right)\label{eq:bound_ellB_1_sub}\quad\mbox{and}\quad
\underline \ell_{ B} \geq \Psi_{ 2} \left(\underline\ell_{ A}\right).
\end{align}
Item 2. of Theorem~\ref{th:full_conv} follows immediately.

\paragraph{Item 3. of Theorem~\ref{th:full_conv}}
Suppose here \eqref{eq:uB_in_I} (which is equivalent to \eqref{hyp:A_sub} in the context of system \eqref{eq:lambdaAB_lin}). Let us first prove \eqref{eq:ellsBinU}, that is $(\underline{ \ell}_{ B}, \bar\ell_{ B}) \in \mathcal{ U}_{ \Phi}$ (recall the definition of $ \Phi$ in \eqref{eq:Phi} and of $ \mathcal{ U}_{ \Phi}$ in \eqref{eq:def_UPhi}). Indeed, the conclusions of Proposition~\ref{prop:bounds_ells} hold and applying the nondrecreasing function $ \Psi_{ 2}$ (recall its definition in \eqref{eq:Psi2}) to \eqref{eq:bound_ellA_1} and \eqref{eq:bound_ellA_2} and combining with \eqref{eq:bound_ellB_1_sub} 
 gives that 
\begin{equation}
\label{eq:ell_B_UPhi}
\Phi(\bar \ell_{ B})\leq \underline \ell_{ B} \text{ and } \bar \ell_{ B} \leq \Phi(\underline \ell_{ B}).
\end{equation}
It remains to prove that $ \underline{\ell}_{ B} \leq \ell \leq \bar \ell_{ B}$. First, suppose that $\underline{\ell}_{ B} \geq \ell$: since $ \Phi$ is nondecreasing, we obtain $ \Phi(\underline{\ell}_{ B}) \leq \Phi(\ell)=\ell$ so that $\ell \leq \underline{\ell}_{ B} \leq \bar \ell_{ B} \leq \Phi \left( \underline{ \ell}_{ B}\right)\leq \ell$ which gives that $ \underline{ \ell}_{ B}= \bar \ell_{ B}= \ell$. In a same way, if $ \bar \ell_{ B}\leq \ell$, then $ \Phi \left( \bar \ell_{ B}\right)\geq \ell$ so that \eqref{eq:ell_B_UPhi} gives again $\ell \leq \Phi( \bar \ell_{ B})\leq \underline{ \ell}_{ B} \leq \bar \ell_{ B}\leq \ell$ so that $ \underline{ \ell}_{ B}= \bar \ell_{ B}= \ell$. Hence, in any case, $ \underline{\ell}_{ B} \leq \ell \leq \bar \ell_{ B}$ is always true. This means exactly \eqref{eq:ellsBinU}. If now we suppose Assumption~\ref{ass:U}, we obtain that $\underline{ \ell}_{ B}= \bar \ell_{ B}=\ell$ and hence the convergence of $  \lambda_{ t}^{B}$ as $t\to \infty$ to the unique fixed-point $\ell$ of $ \Phi$. The fact that $0\leq \underline{\ell}_{ A}\leq \bar \ell_{ A}<\infty$ follows from Proposition~\ref{prop:bounds_ells} and the convergence of $ \lambda_{  t}^{A} \xrightarrow[ t\to\infty]{} \ell_{ A}:= \Psi_{ 1}(\ell_{ B})$ from \eqref{eq:bound_ellA_1} and \eqref{eq:bound_ellA_2}. This gives the implication $(a)\Rightarrow (b)$ of item 3. Conversely, if we suppose now that $  \lambda_{ t}^{B}$ has a finite limit as $t\to\infty$, then $ \underline{ \ell}_{ B}= \bar \ell_{ B}$ and, by \eqref{eq:apriori_ellsB}, \eqref{eq:uB_in_I} is true. This completes the proof of Theorem~\ref{th:full_conv}. 

\section*{Acknowledgements}
E.L. acknowledges the support of grants ANR-19-CE40-0024 (CHAllenges in MAthematical NEuroscience) and ANR-19-CE40-0023 (PERISTOCH). This research has been conducted within the FP2M federation (CNRS FR 2036).


\begin{thebibliography}{99}
\bibitem{aleandri2019}
M.~Aleandri and I.~G. Minelli.
\newblock Opinion dynamics with Lotka-Volterra type interactions.
\newblock {\em Electron. J. Probab.}, 24:31 pp., 2019.

\bibitem{MR3054533}
E.~Bacry, S.~Delattre, M.~Hoffmann, and J.~F. Muzy.
\newblock Some limit theorems for {H}awkes processes and application to
  financial statistics.
\newblock {\em Stochastic Process. Appl.}, 123(7):2475--2499, 2013.

\bibitem{22657695}
J.~Baladron, D.~Fasoli, O.~Faugeras, and J.~Touboul.
\newblock Mean-field description and propagation of chaos in networks of
  {H}odgkin-{H}uxley and {F}itz{H}ugh-{N}agumo neurons.
\newblock {\em The Journal of Mathematical Neuroscience}, 2(1):10, 2012.


\bibitem{bonnet2021maximum}
A.~Bonnet, M.~Herrera, and M.~Sangnier.
\newblock Maximum likelihood estimation for Hawkes processes with
  self-excitation or inhibition.
\newblock {\em arXiv preprint arXiv:2103.05299}, 2021.



\bibitem{braitenberg.schuz:98}
V.~Braitenberg and A.~Schüz.
\newblock {\em Cortex: Statistics and Geometry of Neuronal Connectivity}.
\newblock Springer Berlin Heidelberg, 1998.

\bibitem{MR1411506}
P.~Br{\'e}maud and L.~Massouli{\'e}.
\newblock Stability of nonlinear {H}awkes processes.
\newblock {\em Ann. Probab.}, 24(3):1563--1588, 1996.


\bibitem{brillinger:88}
D.~R. Brillinger.
\newblock Maximum likelihood analysis of spike trains of interacting nerve
  cells.
\newblock {\em Biological Cybernetics}, 59(3):189–200, Aug 1988.


\bibitem{brunel2000dynamics}
N.~Brunel.
\newblock Dynamics of sparsely connected networks of excitatory and inhibitory
  spiking neurons.
\newblock {\em Journal of computational neuroscience}, 8(3):183--208, 2000.


\bibitem{burkitt:06}
A.~N. Burkitt.
\newblock A review of the integrate-and-fire neuron model: I. homogeneous
  synaptic input.
\newblock {\em Biological Cybernetics}, 95(1):1–19, Apr 2006.

\bibitem{buzsaki:06}
G.~Buzs{\'a}ki.
\newblock Rhythms of the brain.
\newblock Oxford University Press, Oct. 2006.

\bibitem{MR3190511}
M.~J. C{\'a}ceres and B.~Perthame.
\newblock Beyond blow-up in excitatory integrate and fire neuronal networks:
  refractory period and spontaneous activity.
\newblock {\em J. Theoret. Biol.}, 350:81--89, 2014.


\bibitem{Carmona2019}
R.~Carmona and C.~V. Graves.
\newblock Jet lag recovery: Synchronization of circadian oscillators as a mean
  field game.
\newblock {\em Dynamic Games and Applications}, 10(1):79--99, May 2019.




\bibitem{castelfranco.hartline:16}
A.~M. Castelfranco and D.~K. Hartline.
\newblock Evolution of rapid nerve conduction.
\newblock {\em Brain Research}, 1641:11–33, Jun 2016.

\bibitem{CHEVALLIER20173870}
J.~Chevallier.
\newblock Mean-field limit of generalized Hawkes processes.
\newblock {\em Stochastic Processes and their Applications}, 127(12):3870 --
  3912, 2017.
  
  \bibitem{Chevallier2017}
J.~Chevallier.
\newblock Fluctuations for mean-field interacting age-dependent Hawkes
  processes.
\newblock {\em Electronic Journal of Probability}, 22(0), 2017.



\bibitem{CHEVALLIER20191}
J.~Chevallier, A.~Duarte, E.~L{\"o}cherbach, and G.~Ost.
\newblock Mean field limits for nonlinear spatially extended Hawkes processes
  with exponential memory kernels.
\newblock {\em Stochastic Processes and their Applications}, 129(1):1 -- 27,
  2019.

\bibitem{chornoboy.schramm.karr:88}
E.~S. Chornoboy, L.~P. Schramm, and A.~F. Karr.
\newblock Maximum likelihood identification of neural point process systems.
\newblock {\em Biological Cybernetics}, 59(4-5):265–275, Sep 1988.

\bibitem{Collet:2015aa}
F.~Collet, P.~{Dai Pra}, and M.~Formentin.
\newblock Collective periodicity in mean-field models of cooperative behavior.
\newblock {\em Nonlinear Differential Equations and Applications NoDEA},
  22(5):1461--1482, 10 2015.

\bibitem{Collet2016}
F.~Collet, M.~Formentin, and D.~Tovazzi.
\newblock Rhythmic behavior in a two-population mean-field Ising model.
\newblock {\em Physical Review E}, 94(4), Oct. 2016.

\bibitem{costa2020renewal}
M.~Costa, C.~Graham, L.~Marsalle, and V.~C. Tran.
\newblock Renewal in Hawkes processes with self-excitation and inhibition.
\newblock {\em Advances in Applied Probability}, 52(3):879--915, 2020.


\bibitem{Cormier2020}
Q.~Cormier, E.~Tanr{\'{e}}, and R.~Veltz.
\newblock Long time behavior of a mean-field model of interacting neurons.
\newblock {\em Stochastic Processes and their Applications}, 130(5):2553--2595,
  May 2020.


%




\bibitem{MR3349003}
F.~Delarue, J.~Inglis, S.~Rubenthaler, and E.~Tanr{\'e}.
\newblock Global solvability of a networked integrate-and-fire model of
  {M}c{K}ean-{V}lasov type.
\newblock {\em Ann. Appl. Probab.}, 25(4):2096--2133, 2015.



\bibitem{MR3449317}
S.~Delattre, N.~Fournier, and M.~Hoffmann.
\newblock Hawkes processes on large networks.
\newblock {\em Ann. Appl. Probab.}, 26(1):216--261, 2016.

\bibitem{MR3311484}
A.~De~Masi, A.~Galves, E.~L{{\"o}}cherbach, and E.~Presutti.
\newblock Hydrodynamic limit for interacting neurons.
\newblock {\em J. Stat. Phys.}, 158(4):866--902, 2015.



\bibitem{DITLEVSEN20171840}
S.~Ditlevsen and E.~L{\"o}cherbach.
\newblock Multi-class oscillating systems of interacting neurons.
\newblock {\em Stochastic Processes and their Applications}, 127(6):1840 --
  1869, 2017.
  
  \bibitem{Duarte:2016aa}
A.~Duarte, E.~L{\"o}cherbach, and G.~Ost.
Stability, convergence to equilibrium and simulation of non-linear Hawkes processes with memory kernels given by the sum of Erlang kernels.
\newblock {\em ESAIM: PS,} 23 (2019) 770-796.



\bibitem{fleming2011counting}
T.~R. Fleming and D.~P. Harrington.
\newblock {\em Counting processes and survival analysis}, volume 169.
\newblock John Wiley \& Sons, 2011.



\bibitem{gerstein.mandelbrot:64}
G.~L. Gerstein and B.~Mandelbrot.
\newblock Random walk models for the spike activity of a single neuron.
\newblock {\em Biophysical Journal}, 4(1):41–68, Jan 1964.

\bibitem{MR3336876}
G.~Giacomin and C.~Poquet.
\newblock Noise, interaction, nonlinear dynamics and the origin of rhythmic
  behaviors.
\newblock {\em Braz. J. Probab. Stat.}, 29(2):460--493, 2015.



\bibitem{goodrich.tamassia.ea:13}
M.~Goodrich, R.~Tamassia, and M.~Goldwasser.
\newblock {\em Data Structures and Algorithms in Python}.
\newblock Wiley Global Education, 2013.

\bibitem{hawkes:71}
A.~G. Hawkes.
\newblock Spectra of some self-exciting and mutually exciting point processes.
\newblock {\em Biometrika}, 58(1):83–90, 1971.

\bibitem{MR0378093}
A.~G. Hawkes and D.~Oakes.
\newblock A cluster process representation of a self-exciting process.
\newblock {\em J. Appl. Probability}, 11:493--503, 1974.



\bibitem{koch.poggio.torre:83}
C.~Koch, T.~Poggio, and V.~Torre.
\newblock Nonlinear interactions in a dendritic tree: localization, timing, and
  role in information processing.
\newblock {\em Proceedings of the National Academy of Sciences},
  80(9):2799--2802, 1983.

\bibitem{lapicque:07}
L.~Lapicque.
\newblock Recherches quantitatives sur l'excitation électrique des nerfs
  traitée comme une polarisation.
\newblock {\em J. Physiol. Pathol. Gen.}, pages 620--635, 1907.


\bibitem{2018arXiv181100305L}
E.~Lu{\c{c}}on and C.~Poquet.
\newblock Periodicity induced by noise and interaction in the kinetic
  mean-field FitzHugh-Nagumo model.
\newblock {\em The Annals of Applied Probability}, 31(2), Apr. 2021.



\bibitem{Luon2019}
E.~Lu{\c{c}}on and C.~Poquet.
\newblock {Emergence of Oscillatory Behaviors for Excitable Systems with Noise
  and Mean-Field Interaction: A Slow-Fast Dynamics Approach}.
\newblock {\em Communications in Mathematical Physics}, 373(3):907--969, Dec.
  2019.

\bibitem{luo:15}
L.~Luo.
\newblock {\em Principles of Neurobiology}.
\newblock Garland Science, Jul 2015.


\bibitem{Meylahn:2018aa}
J.~M. Meylahn.
\newblock Two-community noisy kuramoto model.
\newblock \emph{ArXiv eprint} 1812.05896
\newblock Apr 2019.




\bibitem{ogata:81}
Y.~Ogata.
\newblock On lewis’ simulation method for point processes.
\newblock {\em IEEE Transactions on Information Theory}, 27(1):23–31, Jan
  1981.


\bibitem{pfaffelhuber2021mean}
P.~Pfaffelhuber, S.~Rotter, and J.~Stiefel.
\newblock Mean-field limits for non-linear Hawkes processes with excitation and
  inhibition.
\newblock {\em arXiv preprint arXiv:2102.01052}, 2021.

\bibitem{raad2020stability}
M.~B. Raad and E.~L{\"o}cherbach.
\newblock Stability for Hawkes processes with inhibition.
\newblock {\em Electronic Communications in Probability}, 25, 2020.


\bibitem{raad2020stability2}
M.~B. Raad, S.~Ditlevsen, E.~L{\"o}cherbach, et~al.
\newblock Stability and mean-field limits of age dependent Hawkes processes.
\newblock In {\em Annales de l'Institut Henri Poincar{\'e}, Probabilit{\'e}s et
  Statistiques}, volume~56, pages 1958--1990. Institut Henri Poincar{\'e},
  2020.


\bibitem{reynaud2013inference}
P.~Reynaud-Bouret, V.~Rivoirard, and C.~Tuleau-Malot.
\newblock Inference of functional connectivity in neurosciences via Hawkes
  processes.
\newblock In {\em 2013 IEEE global conference on signal and information
  processing}, pages 317--320. IEEE, 2013.



\bibitem{ross:12}
S.~Ross.
\newblock {\em Simulation}.
\newblock Elsevier Science, 2012.

\bibitem{sacerdote.giraudo:12}
L.~Sacerdote and M.~T. Giraudo.
\newblock {\em Stochastic Integrate and Fire Models: A Review on Mathematical
  Methods and Their Applications}, pages 99--148.
\newblock Springer Berlin Heidelberg, Berlin, Heidelberg, 2013.

\bibitem{SznitSflour}
A.-S. Sznitman.
\newblock Topics in propagation of chaos.
\newblock In {\em {\'E}cole d'{\'E}t{\'e} de {P}robabilit{\'e}s de
  {S}aint-{F}lour {XIX}---1989}, volume 1464 of {\em Lecture Notes in Math.},
  pages 165--251. Springer, Berlin, 1991.



\bibitem{tuckwell:88a}
H.~C. Tuckwell.
\newblock {\em Introduction to Theoretical Neurobiology. Volume 1: Linear cable
  theory and dendrite structure}.
\newblock Cambridge University Press, 1988.

\bibitem{tuckwell:88b}
H.~C. Tuckwell.
\newblock {\em Introduction to Theoretical Neurobiology. Volume 2: Nonlinear
  and stochastic theories.}
\newblock Cambridge University Press, 1988.





\bibitem{verveen.derkesen:68}
A.~Verveen and H.~Derksen.
\newblock Fluctuation phenomena in nerve membrane.
\newblock {\em Proceedings of the IEEE}, 56(6):906–916, 1968.

\bibitem{white:89}
E.~L. White.
\newblock {\em Cortical Circuits}.
\newblock Birkhäuser Boston, 1989.

\bibitem{yarom.hounsgaard:11}
Y.~Yarom and J.~Hounsgaard.
\newblock Voltage fluctuations in neurons: Signal or noise?
\newblock {\em Physiological Reviews}, 91(3):917–929, Jul 2011.

%
\end{thebibliography}
\end{document}